\newcommand\abs[1]{\lvert #1\rvert}
\newtheorem{THM}{Theorem}[section]
\newtheorem{LEM}[THM]{Lemma}
\newtheorem*{THMMAIN2}{Theorem \ref{thm:main2}}
\newtheorem{COR}[THM]{Corollary}
\newtheorem{PROP}[THM]{Proposition}
\newtheorem{CLAIM}{Claim}
\theoremstyle{remark}
\newenvironment{proofofclaim}{\noindent \textsc{Proof of the Claim:}}{\hfill$\Diamond$\medskip}
\newenvironment{proofof}[1]{\noindent{\bfseries Proof of #1.}}{\qed}
\theoremstyle{definition}
\newcommand\dist{\operatorname{dist}}
\newcommand\pack{\textsf{pack}}
\newcommand\cover{\textsf{cover}}
\newcommand\sle\precsim
\newcommand\sge\succsim
\newcommand\seq\sim
\newcommand\tge\succcurlyeq
\newcommand\teq\cong
\newcommand\ple\sqsubseteq
\newcommand\pge\sqsupseteq
\newcommand{\spp}{\operatorname{\textsf{sp}}}
\begin{document}
\title{Erd\H{o}s-P\'osa property of chordless cycles and its applications}
\author{Eun Jung Kim}
\author{O-joung Kwon}
\address[Kim]{CNRS-Universit\'{e} Paris-Dauphine, Place du Marechal de Lattre de Tassigny, 75775 Paris cedex 16, France}
\address[Kwon]{Department of Mathematics, Incheon National University, Incheon, South Korea}
\address[Kwon]{Discrete Mathematics Group, Institute~for~Basic~Science~(IBS), Daejeon,~South~Korea
}\email{eunjungkim78@gmail.com}
\email{ojoungkwon@gmail.com}
\thanks{The corresponding author is O-joung Kwon. The first author was supported by the French National Research Agency under the PRC programme (ANR grant ESIGMA, ANR-17-CE40-0028). The second author was supported by the European Research Council (ERC) under the European Union's Horizon 2020 research and innovation programme (ERC consolidator grant DISTRUCT, agreement No. 648527), supported by the National Research Foundation of Korea (NRF) grant funded by the Ministry of Education (No. NRF-2018R1D1A1B07050294), and supported by  the Institute for Basic Science (IBS-R029-C1).}
\date{\today}
\begin{abstract}
A chordless cycle, or equivalently a hole, in a graph $G$ is an induced subgraph of $G$ which is a cycle of length at least $4$. 
We prove that the Erd\H{o}s-P\'osa property holds for chordless cycles, which resolves the major open question concerning the Erd\H{o}s-P\'osa property.
Our proof for chordless cycles is constructive: in polynomial time, one can find either $k+1$ vertex-disjoint chordless cycles, or $c_1k^2 \log k+c_2$ vertices 
hitting every chordless cycle for some constants $c_1$ and $c_2$. 
It immediately implies an approximation algorithm of factor $\mathcal{O}(\sf{opt}\log {\sf opt})$ for {\sc Chordal Vertex Deletion}.
We complement our main result by showing that chordless cycles of length at least $\ell$ for any fixed $\ell\ge 5$ do not have the Erd\H{o}s-P\'osa property.

\end{abstract}
\keywords{Erd\H{o}s-P\'osa property, holes, chordal graphs}
\maketitle

\section{Introduction}\label{sec:introduction}

All graphs in this paper are finite and have neither loops nor parallel edges. We denote by $\mathbb{N}$ the set of positive integers.
A class $\mathcal{C}$ of graphs is said to have the \emph{Erd\H{o}s-P\'osa property} if there exists a function $f: \mathbb{N} \rightarrow \mathbb{N}$, called a \emph{gap function}, such that
for every graph $G$ and every positive integer $k$, $G$ contains either
\begin{itemize}
\item $k+1$ pairwise vertex-disjoint subgraphs in $\mathcal{C}$, or
\item a vertex set $T$ of $G$ such that $\abs{T}\le f(k)$ and $G- T$ has no subgraphs in $\mathcal{C}$.
\end{itemize} 
Erd\H{o}s and P\'osa~\cite{ErdosP1965} showed that the class of all cycles has this property with a gap function $\mathcal{O}(k\log k)$.
This breakthrough result sparked an extensive research on finding min-max dualities of packing and covering for various graph families and combinatorial objects. 
Erd\H{o}s and P\'osa also showed that the gap function cannot be improved to $o(k\log k)$.
The result of Erd\H{o}s and P\'osa has been strengthened for cycles with additional constraints; for example, long cycles~\cite{RobertsonS1986, BirmeleBR2007, FioriniH2014, MoussetNSW2016, BruhnJS2017}, directed cycles~\cite{ReedRST1996, KakimuraK2012}, cycles with modularity constraints~\cite{Thomassen1988, HuyneJW2017}, or cycles intersecting a prescribed vertex set~\cite{KakimuraKM2011, PontecorviW2012, BruhnJS2017, HuyneJW2017}. 
Not every variant of cycles has the Erd\H{o}s-P\'osa property; for example, Reed~\cite{Reed1999} showed that  the class of odd cycles does not satisfy the Erd\H{o}s-P\'osa property. 
We generally say that a graph class $\mathcal{C}$ has the  \emph{Erd\H{o}s-P\'osa property} under a graph containment relation $\le_{\star}$ 
if there exists a gap function $f: \mathbb{N} \rightarrow \mathbb{N}$ such that
for every graph $G$ and every positive integer $k$, $G$ contains either
\begin{itemize}
\item $k+1$ pairwise vertex-disjoint subsets $Z_1,\ldots , Z_k$ such that each subgraph of $G$ induced by $Z_i$ contains a member of $\mathcal{C}$ under $\le_{\star}$, or
\item a vertex set $T$ of $G$ such that $\abs{T}\le f(k)$  and $G- T$ contains no member of $\mathcal{C}$ under $\le_{\star}$.
\end{itemize} 
Here, $\le_{\star}$ can be a graph containment relation such as subgraph, induced subgraph, minor, topological minor, induced minor, or induced subdivision. 
An edge version and directed version of the Erd\H{o}s-P\'osa property can be similarly defined. 
In this setting, the Erd\H{o}s-P\'osa properties of diverse undirected and directed graph families have been studied for graph containment relations such as minors~\cite{RobertsonS1986}, immersions~\cite{GianKRT2016, NaonoriK2018}, and (directed) butterfly minors~\cite{AmiriKKW2016}. It is known that the edge-version of the Erd\H{o}s-P\'osa property also holds for cycles~\cite{diestel2012}. Raymond and Thilikos~\cite{RaymondT16} provided an up-to-date overview on the Erd\H{o}s-P\'osa properties for a range of graph families.

In this paper, we study the Erd\H{o}s-P\'osa property for cycles of length at least $4$ under the induced subgraph relation.
An induced cycle of length at least $4$ in a graph $G$ is called a \emph{hole} or a \emph{chordless cycle}.
Considering the extensive study on the topic, it is somewhat surprising that 
whether the Erd\H{o}s-P\'osa property holds for cycles of length at least $4$ under the induced subgraph relation has been left open till now. 
This question was explicitly asked by Jansen and Pilipczuk~\cite{JansenP17} in their study of the \textsc{Chordal Vertex Deletion} problem,
and  was also asked by Raymond and Thilikos~\cite{RaymondT16} in their survey. 
We answer this question positively.

\begin{THM}\label{thm:main}
There exist constants $c_1, c_2$ and a polynomial-time algorithm 
which, given a graph $G$ and a positive integer $k$,   finds either $k+1$ vertex-disjoint holes or a vertex set of size at most $c_1k^2\log k + c_2$ hitting every hole of $G$.
\end{THM}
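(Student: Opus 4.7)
The plan is to prove Theorem~\ref{thm:main} by an iterative extraction scheme: repeatedly find either a new hole to add to a disjoint packing or a moderately small hitting set that kills a localized piece of the graph. The decomposition $O(k^2\log k) = k \cdot O(k\log k)$ of the target bound suggests a \emph{per-round lemma} of the form: given a graph $G'$ with a hole $C$, one can in polynomial time either (i) find a hole vertex-disjoint from $V(C)$, or (ii) produce a vertex set $X \subseteq V(G')$ of size $O(k\log k)$ such that every hole of $G'$ meets $V(C) \cup X$. Given this lemma, the theorem follows by looping: each iteration starts with a freshly found hole $C$; case~(i) moves $C$ into the packing and recurses on $G' - V(C)$ with parameter $k-1$, whereas case~(ii) adds $X$ to the running cover and restarts on $G' - X - V(C)$. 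After $k+1$ instances of case~(i) we output the packing; otherwise, $k$ rounds of case~(ii) accumulate a hitting set of size $O(k^2\log k)$ that kills all remaining holes.

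To establish the per-round lemma, I would carry out a BFS layering around $V(C)$. Let $L_i$ denote the vertices at BFS distance exactly $i$ from $V(C)$ in $G'$. If some initial segment $L_0 \cup \cdots \cup L_i$ of total size $O(k\log k)$ already separates $V(C)$ from the rest of $G'$, then it cleanly yields the set $X$: no hole can both meet $V(C)$ and reach the far side without crossing the separator, so the separator (augmented by a few vertices to break any hole residing wholly inside it) is a valid cover. Otherwise the layers grow robustly, and in such a fat region a Menger-type argument yields many vertex-disjoint induced paths between distant layers which, when spliced with arcs of $V(C)$, produce a hole vertex-disjoint from $V(C)$\,---\,in fact enough such holes to terminate the global algorithm directly with a complete packing.

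The main obstacle is guaranteeing chordlessness of the cycles produced by the layered construction. Concatenating induced paths via $V(C)$ need not yield a chordless cycle, because chords may jump across BFS layers or between a newly found path and $V(C)$. Overcoming this requires a careful cleanup: restricting to shortest paths within layer packets, identifying \emph{dominating} vertices (those with many neighbors on $V(C)$, which must be preprocessed away or added to the hitting set), and an induced-cycle extraction step that iteratively shortcuts chords until a genuine hole remains. I expect chord-control to be the hardest part of the argument; the $O(k\log k)$ per-round cost reflects both the classical Erd\H{o}s-P\'osa logarithmic factor for cycle hitting and an additional $O(k)$ contribution from vertices sacrificed in dominating configurations.
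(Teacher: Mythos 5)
Your high-level iteration has a bookkeeping gap that is precisely what makes this problem hard. Your per-round lemma's outcome~(ii) produces a set $X$ of size $O(k\log k)$ such that $V(C)\cup X$ hits every hole of $G'$; but you then add only $X$ to the running cover and continue. When the process terminates, the accumulated cover consists of the sets $X_i$ alone, yet a hole of the original graph could meet some $V(C_i)$ and avoid all of the $X_j$'s, so the final set is not a valid hitting set unless you also include the $V(C_i)$'s. Those hole-vertex sets can be arbitrarily large, so including them blows up the bound. Your BFS argument actually exposes the problem: the ``cheap'' case is when $L_0\cup\cdots\cup L_i$ is small and separates, but $L_0=V(C)$, so this is only cheap when $C$ itself has length $O(k\log k)$. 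When $C$ is long, your scheme has no way to produce a small hitting set, and the fat-layer Menger case doesn't help either --- splicing Menger paths with arcs of $V(C)$ can only give cycles that \emph{contain} pieces of $C$, not holes disjoint from $C$, and you would still need to certify chordlessness of those cycles, which you flag but do not resolve.

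The paper sidesteps this by a structurally different reduction. It greedily deletes \emph{shortest} holes $C_1,\ldots,C_\ell$ until the remainder is chordal; if $\ell>k$ that is already a packing, and otherwise $\ell\le k$. It then builds the cover in the reverse direction, and the key technical statement (the analogue of your per-round lemma, Theorem~\ref{thm:core}) is applied under the hypotheses that $C$ is a shortest hole of length $>\mu_k$ \emph{and that $G-V(C)$ is chordal}, and crucially concludes with a hitting set of size $\mu_k=O(k\log k)$ that does \emph{not} include $V(C)$; when $C$ is short ($\le\mu_k$) one just absorbs $V(C)$ into the cover. The chordality of $G-V(C)$ and the shortest-hole property are what make the structural analysis (dominating vertices, supports, petals vs.\ full sunflowers, tulips, the $(2,3)$-subgraph built from $F$-extensions, Simonovits' theorem) tractable; a generic BFS-layer-plus-Menger argument without these hypotheses does not obviously control chords, and nothing in your sketch recovers that control.
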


One might ask whether Theorem~\ref{thm:main} can be extended to the class of cycles of length at least $\ell$ for fixed $\ell\ge 5$.
We present a complementary result that for every fixed $\ell\ge 5$, the class of cycles of length at least $\ell$ does not satisfy the Erd\H{o}s-P\'osa property under the induced subgraph relation.

\begin{THM}\label{thm:main2}
Let $\ell\ge 5$ be an integer. 
Then the class of cycles of length at least $\ell$ does not have the Erd\H{o}s-P\'osa property under the induced subgraph relation.
\end{THM}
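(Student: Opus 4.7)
To establish Theorem~\ref{thm:main2}, the plan is to construct, for each integer $N\ge 1$, a graph $G_N$ with no two vertex-disjoint induced cycles of length at least $\ell$ and in which every transversal for such cycles has size at least $N$. Such a family refutes the Erd\H{o}s-P\'osa property: for any candidate gap function $f$, choosing $k=1$ and $N>f(1)$ yields a graph satisfying neither alternative of the definition, since $G_N$ contains no pair of disjoint induced cycles of length $\ge\ell$ (violating the first alternative with $k+1=2$) while no vertex set of size at most $f(1)<N$ hits all of them.

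The construction exploits the incidence structure of the projective plane $\Pi=\mathrm{PG}(2,q)$ of order $q$, chosen as a prime power with $q+\sqrt{q}+1\ge N$. The vertex set of $G_N$ consists of the $q^{2}+q+1$ points of $\Pi$ together with \emph{helper} vertices: for each line $L$ of $\Pi$, fix a cyclic ordering of its $q+1$ points and insert between each consecutive pair of points on $L$ a fresh path of length $r=\lceil\ell/(q+1)\rceil$ of subdivision vertices. For every line $L$, the concatenation of these helper paths yields an induced cycle $C_L$ of length $r(q+1)\ge\ell$ that visits every point of $L$.

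The proof then proceeds in two main steps. For the packing bound, any two distinct lines of $\Pi$ meet in a unique point, so the cycles $C_L$ and $C_{L'}$ share that point and are never vertex-disjoint; a structural analysis of $G_N$ further shows that the $C_L$ are the only induced cycles of length at least $\ell$, from which it follows that no two disjoint induced cycles of length $\ge\ell$ exist in $G_N$ at all. For the covering bound, any transversal $T\subseteq V(G_N)$ hitting all long induced cycles may be taken to consist only of points of $\Pi$: a helper vertex of $T$ lies on a unique $C_L$ and can be swapped, without increasing $|T|$, for an endpoint of its helper path (still a point of $\Pi$ and still lying on the same $C_L$). Then $T$ is a \emph{blocking set} of $\Pi$ in the classical sense, and Bruen's theorem yields $|T|\ge q+\sqrt{q}+1\ge N$.

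The main obstacle is the structural claim invoked in the packing step, namely that the $C_L$ are the only induced cycles of length at least $\ell$ in $G_N$. A priori, $G_N$ may host ``cross-line'' induced cycles obtained by switching between the helpers of different lines at a common point. A cross-line cycle using $k$ helper paths has length at least $kr$, which is less than $\ell$ whenever $k<q+1$, so short cross-line cycles are automatically too short to matter. To eliminate the longer ones, one augments $G_N$ with carefully chosen chord-creating edges between the helpers of distinct lines adjacent to a common point, so that any cross-line cycle passing through a shared point acquires a chord and fails to be induced. Verifying that these auxiliary edges do not destroy the induced-ness of any $C_L$ and do not create new long induced cycles---so that both the packing and the covering analyses still go through---is the technical heart of the argument.
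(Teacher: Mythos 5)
The high-level strategy you describe matches the paper's: build a graph whose long holes correspond one-to-one to the hyperedges of an intersecting set system with large transversal number, invoke the definition at $k=1$, and conclude neither alternative can hold. The paper realizes this with the hypergraph $U_{2x-1,x}$ (all $x$-subsets of a $(2x-1)$-set), which pairwise intersect for free and need $\ge x$ vertices to hit, with $x=\max\{f_\ell(1)+1,\ell\}$ --- no Bruen-type inequality is needed.

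However, your proposal has a genuine gap precisely where you flag one. You need the graph $G_N$ to contain \emph{no} induced cycle of length $\ge\ell$ other than the $C_L$. Your subdivision construction does not provide this: a cycle that enters line $L$ at a shared point $p$, travels along helpers of $L$, switches at another shared point, travels along helpers of $L'$, and so on, is a priori chordless, and your crude length estimate for such ``cross-line'' cycles does not actually kill the relevant ones. (With $r=\lceil\ell/(q+1)\rceil$, a cross-line cycle using $k<q+1$ helper paths can already have length $kr\ge\ell$ once $q\ge\ell-2$, say, so the claim ``$kr<\ell$ whenever $k<q+1$'' fails.) You then propose to repair this by ``augmenting $G_N$ with carefully chosen chord-creating edges'', but you neither define these edges nor verify that they (a) leave each $C_L$ induced, (b) kill all cross-line induced cycles, and (c) do not create new long induced cycles. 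Until this is done, the packing claim ``no two vertex-disjoint long holes'' and the reduction of the covering argument to blocking sets are both unsupported.

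This is exactly the step the paper engineers away by choosing a different gadget. Instead of subdivision paths, the paper assigns to each hyperedge $F$ an independent set $Y_F$ and makes $Y_F\cup Y_{F'}$ a complete bipartite graph for all $F\neq F'$. Then any cycle touching two distinct $Y_F$'s automatically acquires a chord or has length $\le 5$, so the only induced cycles of length $\ge\ell$ are the intended ones $C_F$; the structural lemma becomes a short case analysis rather than an open-ended verification. If you want to keep the projective-plane framing, you would need an analogous ``make distinct gadgets mutually complete'' device so that cross-line cycles are chorded automatically, and then re-check that the covering side (reducing a transversal to points and applying Bruen) survives the added edges. As written, the proof is not complete.
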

In Section~\ref{sec:lowerbound}, 
we additionally argue that 
the class of cycles of length at least $\ell$ for fixed $\ell\ge 5$ does not have the \emph{$1/ \alpha$-integral Erd\H{o}s-P\'osa property} under the induced subgraph relation, for all integers $\alpha\ge 2$. 

Theorem~\ref{thm:main} is closely related to the \textsc{Chordal Vertex Deletion} problem.
The \textsc{Chordal Vertex Deletion} problem asks whether, for a given graph $G$ and a positive integer $k$, there exists a vertex set $S$ of size at most $k$ such that $G-S$ has no holes; 
	in other words, $G-S$ is a chordal graph.
	In  parameterized complexity, whether or not \textsc{Chordal Vertex Deletion} admits a polynomial kernelization was one of major open problems since it was first mentioned by Marx~\cite{Marx10}.
	 A \emph{polynomial kernelization} of a parameterized problem is a polynomial-time algorithm 
	 that takes an instance $(x,k)$ and outputs an instance $(x', k')$ such that 
	 (1) $(x,k)$ is a \textsc{Yes}-instance if and only if $(x', k')$ is a \textsc{Yes}-instance, and
	 (2) $k'\le k$, and $\abs{x'}\le g(k)$ for some polynomial function $g$.

Jansen and Pilipczuk~\cite{JansenP17}, and independently Agrawal \emph{et~al.}~\cite{AgrawalLMSZ17}, presented polynomial kernelizations for the \textsc{Chordal Vertex Deletion} problem. In both works, an approximation algorithm for the optimization version of this problem emerges as an important subroutine. Jansen and Pilipczuk~\cite{JansenP17} obtained an approximation algorithm of factor $\mathcal{O}({\sf opt}^2 \log {\sf opt} \log n)$ using iterative decomposition of the input graph and linear programming. Agrawal \emph{et~al.}~\cite{AgrawalLMSZ17} obtained an algorithm of factor $\mathcal{O}({\sf opt}\log^2 n)$ based on divide-and-conquer. 
As one might expect, the factor of an approximation algorithm for the \textsc{Chordal Vertex Deletion} is intrinsically linked to the quality of  the polynomial kernels 
attained in \cite{JansenP17} and~\cite{AgrawalLMSZ17}.
We point out that the polynomial-time algorithm of Theorem of~\ref{thm:main} can be easily converted into an approximation algorithm of  factor  $O({\sf opt}\log {\sf opt})$.

\begin{THM}\label{thm:main3}
There is an approximation algorithm of factor $O({\sf opt}\log {\sf opt})$ for {\sc Chordal Vertex Deletion}. 
\end{THM}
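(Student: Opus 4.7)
The plan is to invoke the algorithm of Theorem~\ref{thm:main} iteratively as a subroutine, searching for the smallest parameter $k$ on which it outputs a hitting set rather than a packing. Let ${\sf opt}$ denote the minimum size of a vertex set whose deletion makes $G$ chordal, equivalently the minimum number of vertices hitting every hole. Up front, I would test in polynomial time whether $G$ is already chordal; if so, return the empty set. So from now on assume ${\sf opt}\ge 1$.

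Two observations link the behavior of the subroutine to ${\sf opt}$. First, if on input $(G,k)$ the subroutine returns $k+1$ pairwise vertex-disjoint holes, then any hole-hitting set must pick a distinct vertex from each of them, witnessing ${\sf opt}\ge k+1$. Second, as soon as $k\ge {\sf opt}$ no family of $k+1$ vertex-disjoint holes can exist in $G$ (such a family would force the hitting number above ${\sf opt}$), so the subroutine is forced into its second branch and returns a hitting set of size at most $c k^2\log k$.

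Based on these, the approximation algorithm calls the subroutine for $k=1,2,3,\ldots$ and halts at the smallest integer $k^*$ for which a hitting set $T$ is returned. By the second observation, $k^*\le {\sf opt}\le n$, so the loop terminates after polynomially many polynomial-time calls. When $k^*\ge 2$, the preceding call on $(G,k^*-1)$ has already certified ${\sf opt}\ge k^*$ by returning $k^*$ vertex-disjoint holes, and therefore
\[
\abs{T} \;\le\; c(k^*)^2 \log k^* \;\le\; c\cdot {\sf opt}\cdot k^* \log k^* \;\le\; c\cdot {\sf opt}^2 \log {\sf opt},
\]
so the approximation ratio $\abs{T}/{\sf opt}$ is $O({\sf opt}\log {\sf opt})$. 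The boundary case $k^*=1$ yields $\abs{T}=O(1)$, which since ${\sf opt}\ge 1$ also meets the claimed ratio. There is no substantive obstacle here; the entire argument is a clean reduction to Theorem~\ref{thm:main}, with the only point to verify being that at the stopping step the lower bound ${\sf opt}\ge k^*$ (from the packing found one iteration earlier) and the upper bound $\abs{T}\le c(k^*)^2\log k^*$ (from the current iteration) are simultaneously available.
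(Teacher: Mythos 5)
Your proof is correct and follows essentially the same route as the paper: iteratively invoke the Theorem~\ref{thm:main} subroutine with increasing $k$, stop at the first $k^*$ that yields a hitting set, and use the packing found at $k^*-1$ to certify ${\sf opt}\ge k^*$, giving $\abs{T}\le c(k^*)^2\log k^*\le c\,{\sf opt}^2\log{\sf opt}$. The only cosmetic difference is that the paper seeds $k$ with the size of a greedily constructed maximal hole packing rather than starting from $k=1$, which changes nothing substantive.
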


It should be noted that an $\mathcal{O}(\log^2 n)$-factor approximation algorithm 
was presented recently by Agrawal \emph{et~al.}~\cite{AgrawalLMSZ17b}, which outperforms 
the approximation algorithm of Theorem~\ref{thm:main3}, when applied to the kernelization algorithm.

Our result has another application on packing and covering for weighted cycles.
For a graph $G$ and a non-negative weight function $w:V(G)\rightarrow \mathbb{N}\cup \{0\}$, 
let $\pack(G, w)$ be the maximum number of cycles (repetition is allowed) such that each vertex $v$ is used at most $w(v)$ times, and
let $\cover(G, w)$ be the minimum value $\sum_{v\in X} w(v)$ where $X$ hits all cycles in $G$.
Ding and Zang~\cite{DingZ2002} characterized \emph{cycle Mengerian graphs} $G$, which satisfy the property that for all non-negative weight function $w$, $\pack(G,w)=\cover(G,w)$.
Up to our best knowledge, it was not previously known whether $\cover(G,w)$ can be bounded by a function of $\pack(G,w)$.

As a corollary of Theorem~\ref{thm:main}, we show the following.

\begin{COR}
For a graph $G$ and a non-negative weight function $w:V(G)\rightarrow \mathbb{N}\cup \{0\}$, 
$\cover(G,w)\le \mathcal{O}(k^2\log k)$, where $k=\pack(G,w)$.
\end{COR}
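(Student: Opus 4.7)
The plan is to reduce the statement to the unweighted Erd\H{o}s--P\'osa theorem for chordless cycles (Theorem~\ref{thm:main}) via a vertex blow-up. First, remove every vertex $v$ with $w(v) = 0$ and place them into the eventual cover at no cost; for the remaining graph I would build an auxiliary unweighted graph $G'$ as a \emph{clique blow-up}: replace each $v$ by a clique $K_v$ of $w(v)$ copies, and completely join $K_u$ to $K_v$ whenever $uv \in E(G)$.

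The heart of the argument is a clean correspondence between holes of $G'$ and holes of $G$. The key observation is that two copies of the same vertex $v$ are adjacent in $G'$, so if both lie on a hole $C'$ they must be consecutive along $C'$; but then the two vertices adjacent to this pair in $C'$ project to neighbors of $v$ in $G$ and are therefore adjacent to every copy of $v$, producing a chord in $C'$. Hence every hole of $G'$ uses at most one copy of each underlying vertex, and its projection to $G$ is a hole, since non-consecutive projected vertices must be non-adjacent in $G$ (otherwise another chord appears in $C'$). Conversely, every hole of $G$ lifts to a hole of $G'$ by choosing a single copy per vertex.

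From this correspondence I would deduce that $\pack(G, w) = \pack(G', \mathbf{1}) = k$ (disjoint lifts respect the weight bound, and vertex-disjoint $G'$-holes project to a weighted packing since only $w(v)$ copies of $v$ are available) and $\cover(G, w) \le \cover(G', \mathbf{1})$. For the cover inequality, given a hitting set $X'$ of $G'$, set $X := \{v : \text{all copies of } v \text{ lie in } X'\}$; then $\sum_{v \in X} w(v) \le |X'|$, and any hole of $G$ avoiding $X$ would lift to a hole of $G'$ disjoint from $X'$, a contradiction. Applying Theorem~\ref{thm:main} to $G'$ with parameter $k$, the algorithm cannot return $k+1$ vertex-disjoint holes (by the packing equality), and therefore outputs a hitting set of size $O(k^2 \log k)$, which projects to a weighted cover of $G$ of the claimed cost.

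The step I expect to require the most care is the hole correspondence. An independent-set blow-up does not suffice: it can introduce spurious holes whose projection is not even a cycle in $G$. For instance, blowing up the middle vertex of the path $u$--$v$--$w$ with $w(v) \ge 2$ already creates a $4$-hole on $u, v^A, w, v^B$, yet the weighted packing number of the path is $0$. The clique blow-up is designed precisely to force a chord in every such configuration, which is what keeps the translation between weighted and unweighted packings and covers tight enough to transfer the bound of Theorem~\ref{thm:main}.
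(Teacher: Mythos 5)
Your construction and the hole-to-hole correspondence you establish between $G$ and the clique blow-up $G'$ are both correct, but they prove the wrong statement. The quantities $\pack(G,w)$ and $\cover(G,w)$ are defined with respect to \emph{all cycles} of $G$, not just its chordless cycles. A triangle in $G$ with all weights equal to $1$ already breaks the argument: your $G'$ is again $K_3$, which has no holes, so the hole-packing number of $G'$ is $0$, while $\pack(G,w)=1$. The same phenomenon occurs for any cycle of $G$ carrying a chord. So the equality $\pack(G,w)=\pack(G',\mathbf{1})$ and the inequality $\cover(G,w)\le\cover(G',\mathbf{1})$ do not hold when interpreted with holes on the right-hand side, and you cannot apply Theorem~\ref{thm:main} to $G'$ and transfer the bound.

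The missing idea is an edge subdivision before the blow-up. The paper first replaces each edge $uv$ by a length-two path $u$--$e_{uv}$--$v$ and assigns the subdivision vertex weight $\min\{w(u),w(v)\}$; in the resulting bipartite graph every subdivision vertex has degree $2$, so \emph{every} cycle of the original graph (triangles and chorded cycles included) becomes a chordless cycle of length at least $6$, and conversely every cycle of the subdivision projects to a cycle of $G$. Only then does the paper apply the clique blow-up, after which your hole-to-hole correspondence kicks in and does exactly what you want. Without that subdivision step, the blow-up only captures the chordless cycles of $G$, and the statement does not follow.
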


The paper is organized as follows. Section~\ref{sec:prelim} provides basic  notations and previous results that are relevant to our result. 
In Section~\ref{sec:overview}, we explain how to reduce the proof of Theorem~\ref{thm:main} to a proof under a specific premise, in which  
we are given a shortest hole $C$ of $G$ 
such that $C$ has length more than $d_1k\log k + d_2$ for some constants $d_1, d_2$ and $G-V(C)$ is chordal.
In this setting, we introduce further technical notations and terminology.  An outline of our proof will be also given in this section.
We present some structural properties of a shortest hole $C$ and its neighborhood in Section~\ref{sec:lemmas}.
In Sections~\ref{sec:hittingsunflower} and \ref{sec:tulip}, we prove the Erd\H{o}s-P\'osa property for different types of holes intersecting $C$ step by step, and we conclude Theorem~\ref{thm:main} at the end of Section~\ref{sec:tulip}.
Section~\ref{sec:lowerbound} demonstrates that the class of cycles of length at least $\ell$, for every fixed $\ell\ge 5$, does not have the Erd\H{o}s-P\'osa property under the induced subgraph relation. 
Section~\ref{sec:applications} illustrates the implications of Theorem~\ref{thm:main} to weighted cycles and to the \textsc{Chordal Vertex Deletion} problem.

\section{Preliminaries}\label{sec:prelim}

For a graph $G$, we denote by $V(G)$ and $E(G)$ the vertex set and the edge set of $G$, respectively.
Let $G$ be a graph. 
For a vertex set $S$ of $G$, let $G[S]$ denote the subgraph of $G$ induced by $S$, and 
let $G-S$ denote the subgraph of $G$ obtained by removing all vertices in $S$.
For $v\in V(G)$, we let $G-v:=G-\{v\}$.
If $uv\in E(G)$,  we say that $u$ is a \emph{neighbor} of $v$. 
The set of neighbors of a vertex $v$ is denoted by $N_G(v)$, and the \emph{degree} of $v$ is defined as the size of $N_G(v)$.
The \emph{open neighborhood} of  a vertex set $A\subseteq V(G)$ in $G$, denoted by $N_G(A)$, is the set of vertices in $V(G)\setminus A$ having a neighbor in $A$. The set $N_G(A)\cup A$ is called the \emph{closed neighborhood} of $A$, and denoted by $N_G[A]$. 
For convenience, we define these neighborhood operations for subgraphs as well; that is, for a subgraph $H$ of $G$, 
let $N_G(H):=N_G(V(H))$ and $N_G[H]:=N_G[V(H)]$.
When the underlying graph is clear from the context, we drop the subscript $G$. 
A vertex set $S$ of a graph is a \emph{clique} if every pair of vertices in $S$ is adjacent, and  
it is an \emph{independent set} if every pair of  vertices in $S$ is non-adjacent.
For two subgraphs $H$ and $F$ of $G$, the \emph{restriction} of $F$ on $H$ is defined as the graph $F[V(F)\cap V(H)]$.

A \emph{walk} is a non-empty alternating sequence of vertices and edges of the form $(x_0,e_0,\ldots  ,e_{\ell-1},x_{\ell})$, beginning and ending with vertices, such that for every $0\leq i\leq \ell-1$,  $x_{i}$ and $x_{i+1}$ are endpoints of $e_i$. 
 A \emph{path} is a walk in which vertices are pairwise distinct. 
For a path $P$ on vertices $x_0,\ldots , x_{\ell}$ with edges $x_ix_{i+1}$ for $i=0,1, \ldots , \ell-1$, we write $P=x_0x_1 \cdots x_{\ell}$.
It is also called an $(x_0, x_{\ell})$-path.
We say $x_{i}$ is the $i$-th neighbor of $x_0$, and similarly, $x_{\ell-i}$ is the $i$-th neighbor of $x_{\ell}$ in $P$.
A \emph{cycle} is a walk $(x_0, e_0, \ldots, e_{\ell-1}, x_{\ell})$ in which vertices are pairwise distinct except $x_0=x_{\ell}$. 
For a cycle $C$ on $x_0,x_1, \ldots , x_{\ell}$ with edges $x_ix_{i+1}$ for $i=0,1, \ldots , \ell-1$ and $x_{\ell}x_0$, 
we write $C=x_0x_1 \cdots x_{\ell}x_0$. If a cycle or a path $H$ is an induced subgraph of the given graph $G$,
then we say that $H$ is an induced cycle or an induced path in $G$, respectively. 

A subpath of a path $P$ starting at $x$ and ending at $y$ is denoted as $xPy$. 
In the notation $xPy$, we may replace $x$ or $y$ with $\mathring{x}$ or $\mathring{y}$, to obtain a subpath starting from the neighbor of $x$ in $P$ closer to $y$ or ending at the neighbor of $y$ in $P$ closer to $x$, respectively.
For instance, $xP\mathring{y}$ refers to the subpath of $P$ starting at $x$ and ending at the neighbor of $y$ in $P$ closer to $x$. 
Given two walks $P=(v_0,e_0,\ldots  ,e_{p-1},v_{p})$ and $Q=(u_{0},f_{0},\ldots  ,f_{q-1},u_{q})$ such that $v_p=u_0$, the \emph{concatenation} of $P$ and $Q$ is  
defined as the walk $(v_0,e_0,\ldots  ,e_{p-1},v_{p}(=u_0),f_{0},\ldots  ,f_{q-1},u_{q})$, which we denote as $P\odot Q$.
Note that for two internally vertex-disjoint paths $P_1$ and $P_2$ from $v$ to $w$, 
$vP_1w\odot wP_2v$ denotes the cycle passing through $P_1$ and $P_2$.

Given a graph $G$, the \emph{distance} between two vertices $x$ and $y$ in $G$ is defined as the length of a shortest $(x,y)$-path and denoted as $\dist_G(x,y)$. If $x=y$, then we define $\dist_G(x,y)=0$, and $\dist_G(x,y)=\infty$ if there is no $(x,y)$-path in $G$. The distance between two vertex sets $X,Y\subseteq V(G)$, written as $\dist_G(X,Y)$, is the minimum $\dist_G(x,y)$ over all $x\in X$ and $y\in Y$. If $X=\{x\}$, then we write $\dist_G(X,Y)$ as $\dist_G(x,Y)$. For a vertex subset $S$ of $G$, a vertex set $U$ is the \emph{$r$-neighborhood} of $S$ in $G$ if it is the set of all vertices $w$ such that $\dist_G(w, S)\le r$. 
We denote the $r$-neighborhood of $S$ in $G$ as $N^r_G[S]$. When the underlying graph $G$ is clear from the context, we omit the subscript $G$.

Given a cycle $C=x_0x_1 \cdots x_{\ell}x_0$, an edge $e$ of $G$ is a \emph{chord} of $C$ if both endpoints of $e$ are contained in $V(C)$ 
but $e$ is not an edge of $C$. 
A \emph{hole} in a graph $G$ is an induced cycle of length at least 4 in $G$. 
A hole is also called as a \emph{chordless cycle}.
A graph is \emph{chordal} if it has no holes.
A vertex set $T$ of a graph $G$ is called a \emph{chordal deletion set} if $G-T$ is chordal.

Given a vertex set $S\subseteq V(G)$, a path $P$ is called an \emph{$S$-path} if the endpoints of $P$ are vertices of $S$ and all internal vertices are contained in $V(G)\setminus S$. An $S$-path is called \emph{proper} if it has at least one internal vertex.  An $(A,B)$-path of a graph $G$ is a path $v_0v_1\cdots v_{\ell}$ such that $v_0\in A$, $v_{\ell}\in B$ and all internal vertices are in $V(G)\setminus (A\cup B)$. Observe that every path from $A$ to $B$ contains an $(A,B)$-path. If $A$ or $B$ is a singleton, then we omit the bracket from the set notation. A vertex set $S$ is an \emph{$(A,B)$-separator} if $S$ disconnects all $(A,B)$-paths in $G$.

We recall Menger's Theorem.
\begin{THM}[Menger's Theorem; See for instance \cite{diestel2012}]\label{thm:menger}
Let $G$ be a  graph and $A,B\subseteq V(G)$. Then the size of a minimum $(A,B)$-separator in $G$  equals the maximum number of vertex-disjoint $(A,B)$-paths in $G$. 
Furthermore, one can output either one of them in polynomial time.
\end{THM}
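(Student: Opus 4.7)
The plan is to prove both directions of the stated equality and then describe the polynomial-time algorithm. The easy direction, that the maximum number of vertex-disjoint $(A,B)$-paths is at most the minimum $(A,B)$-separator size, is immediate: if $P_1,\dots,P_p$ are pairwise vertex-disjoint $(A,B)$-paths and $S$ is any $(A,B)$-separator, each $P_i$ must contain a vertex of $S$, and vertex-disjointness forces these vertices to be distinct, so $p \le \abs{S}$.

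For the nontrivial direction I would use induction on $\abs{E(G)}$. Let $k$ denote the minimum $(A,B)$-separator size. If $G$ has no edges, then the only $(A,B)$-paths are the single vertices of $A \cap B$, and $A \cap B$ is itself the minimum separator, so the equality is immediate. Otherwise, pick any edge $e=xy$ and compare the minimum separator size in $G-e$ with $k$. If it is still at least $k$, then by induction $G-e$ contains $k$ vertex-disjoint $(A,B)$-paths, which remain vertex-disjoint in $G$. Otherwise $G-e$ admits an $(A,B)$-separator $T$ of size $k-1$, and then both $S_x := T \cup \{x\}$ and $S_y := T \cup \{y\}$ are $(A,B)$-separators of $G$ of size exactly $k$. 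Fix $S := S_x$ and let $G_A$ be the subgraph of $G$ induced by $S$ together with all vertices reachable from $A$ in $G-S$; define $G_B$ symmetrically. Since $S$ separates $A$ from $B$, one has $V(G_A) \cap V(G_B) = S$, and each of $G_A, G_B$ has strictly fewer edges than $G$ (the edge $e$ lies in at most one of them, and the other side is missing at least one whole half of the split). Applying induction to $(G_A, A, S)$ and to $(G_B, S, B)$ yields $k$ vertex-disjoint $(A, S)$-paths and $k$ vertex-disjoint $(S, B)$-paths, which can be concatenated at $S$ to give the desired $k$ vertex-disjoint $(A,B)$-paths in $G$.

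The main obstacle is verifying that both $(G_A, A, S)$ and $(G_B, S, B)$ remain instances with minimum separator size still equal to $k$ and with strictly smaller edge set; the former is standard, since any smaller separator on one side would combine with the other side to form an $(A,B)$-separator of $G$ of size less than $k$, while for the latter one may have to break symmetry between $S_x$ and $S_y$ and exploit the assumption that $T$ is a proper separator of $G-e$. For the algorithmic claim, the induction above is already constructive; alternatively, the problem can be solved by reducing to unit-capacity maximum flow via vertex-splitting (replacing each $v$ by an arc of capacity one between two copies of $v$), adding a super-source adjacent to $A$ and a super-sink adjacent to $B$, and running Ford-Fulkerson with at most $k+1$ augmenting-path searches, each executable in $O(\abs{V(G)}+\abs{E(G)})$ time. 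Upon termination, either $k+1$ augmenting paths produce $k+1$ vertex-disjoint $(A,B)$-paths or the final residual graph exposes an $(A,B)$-separator of size $k$, so exactly one of the two outputs required by the statement is returned in polynomial time.
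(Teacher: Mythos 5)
The paper does not prove Menger's theorem; it cites Diestel and uses the statement as a black box, so there is no in-paper argument to compare against. Your plan is one of the standard inductive proofs of Menger, and the max-flow/vertex-splitting reduction you describe at the end is the usual and correct way to obtain the algorithmic part of the claim.

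That said, the inductive argument has a genuine gap at exactly the point you flag. The assertion that both $G_A$ and $G_B$ have strictly fewer edges than $G$ is not established by your parenthetical remark. To get $e\notin E(G_A)$ one must first orient $e$: since $T$ separates $A$ from $B$ in $G-e$ but (by minimality of $k$) not in $G$, there is an $(A,B)$-path in $G-T$ through $e$, and this singles out one endpoint, say $x$, as the one reachable from $A$ in $G-T$ by a path avoiding $e$. Only with that orientation does setting $S=S_x$ force $y\notin V(G_A)$; fixing $S:=S_x$ arbitrarily, as you do, does not. Moreover, even with the correct choice, $G_B$ need not shrink in degenerate configurations (e.g.\ when $A\subseteq S$, so that $G_A=G[S]$ and $G_B$ can be all of $G$), and the standard treatments dispose of such cases before splitting. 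You name the obstacle (``break symmetry between $S_x$ and $S_y$ \dots exploit the assumption that $T$ is a proper separator'') but do not close it, so the inductive proof is incomplete as written. The Ford--Fulkerson reduction in your final paragraph, however, is self-contained and already yields both the min--max equality and the polynomial-time computability, which is all this paper actually needs from Theorem~\ref{thm:menger}.
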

A \emph{bipartite graph} is a graph $G$ with a vertex bipartition $(A,B)$ in which each of $G[A]$ and $G[B]$ is edgeless.
A set $F$ of edges in a graph is a \emph{matching} if no two edges in $F$ have a common endpoint.
A vertex set $S$ of a graph $G$ is a \emph{vertex cover} if $G-S$ has no edges.
By Theorem~\ref{thm:menger}, 
given a bipartite graph with a bipartition $(A,B)$, 
one can find a maximum matching or a minimum vertex cover in polynomial time.

The following result is useful to find many vertex-disjoint cycles in a graph of maximum degree $3$. All logarithms in this paper are taken to base 2. We define $s_k$ for $k\in \mathbb{N}$ as
\begin{align*}
s_k=
\begin{cases}
4k(\log k + \log \log k +4) \quad &\text{if } k\geq 2\\
2 &\text{if } k=1.
\end{cases}
\end{align*}
\begin{THM}[Simonovitz~\cite{Simonovits1967}]\label{thm:simonovitz}
Let $G$ be a graph  all of whose vertices have degree $3$ and let $k$ be a positive integer. If $\abs{V(G)}\geq s_k$, then $G$ contains at least $k$ vertex-disjoint cycles. Furthermore, such $k$ cycles can be found in polynomial time.
\end{THM}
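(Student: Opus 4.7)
The plan is iterative: find a shortest cycle, remove its vertices together with a cleanup cascade, and repeat $k$ times. The key tool is the Moore bound, which says that in any graph of minimum degree at least $3$ on $n$ vertices, a breadth-first search from any vertex produces a cycle of length at most $2\log_2 n + 2$, because BFS levels grow by a factor of at least $2$ until a non-tree edge appears. Applied to the cubic graph $G$, this yields a shortest cycle $C_1$ of length $O(\log n)$, and $C_1$ is necessarily induced since any chord would contradict its minimality.

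Writing $n_3(H)$ for the number of degree-$\geq 3$ vertices of $H$, the main loop maintains a graph $G_i$ of minimum degree at least $2$ with $G_0 := G$: at step $i$ I would extract a shortest cycle $C_i$ of $G_{i-1}$ and let $G_i := G_{i-1}-V(C_i)$ after iteratively deleting every vertex of degree at most $1$. Suppressing the degree-$2$ vertices of $G_{i-1}$ yields a cubic multigraph on $n_3(G_{i-1})$ vertices, so the Moore bound produces a cycle $C_i$ traversing only $O(\log n_3(G_{i-1}))$ degree-$\geq 3$ vertices. Since $C_i$ is a shortest induced cycle of length at least $5$ (the cases $|V(C_i)|\in\{3,4\}$ are easy and can be absorbed into the additive $4$ appearing in $s_k$), every vertex outside $C_i$ has at most one neighbour on $C_i$ -- two such neighbours would produce a cycle of length at most $4$ via a chord-path -- so removing $V(C_i)$ decreases each outside vertex's degree by at most one. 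Combined with a careful analysis of the cleanup cascade, in which each chain of removed degree-$\leq 1$ vertices either ends inside $V(C_i)$ or terminates at exactly one additional degree-$\geq 3$ vertex whose degree drops by one, this gives $n_3(G_i) \geq n_3(G_{i-1}) - c\log n_3(G_{i-1})$ for an absolute constant $c$.

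Unrolling the recurrence from $n_3(G_0) = |V(G)| \geq s_k$ shows that $n_3(G_k) > 0$, so all $k$ iterations succeed, and the cycles $C_1,\ldots,C_k$ are pairwise vertex-disjoint by construction. The procedure runs in polynomial time because each iteration uses only a BFS-based shortest-cycle computation plus a linear-time cleanup pass. The main obstacle will be the precise accounting of the cleanup cascade: one must verify that the total number of degree-$\geq 3$ vertices lost per iteration is genuinely $O(\log n_3(G_{i-1}))$ rather than larger, and then push the constants through the recurrence tightly enough to recover the explicit bound $s_k = 4k(\log k + \log\log k + 4)$ rather than merely some loose $O(k \log k)$ threshold.
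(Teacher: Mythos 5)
The paper does not prove Theorem~\ref{thm:simonovitz}; it is cited verbatim from Simonovits' 1967 paper and then used as a black box (in Lemma~\ref{lem:manybranching}). So there is no in-paper proof to compare against, and your attempt has to stand on its own merits.

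Your overall strategy (repeatedly peel off a short cycle and track how many degree-$3$ vertices survive the cleanup) is indeed the Simonovits-style argument. But there is a concrete gap in the key step. You alternate between two different objects and treat them as if they were the same: (a) a shortest cycle of $G_{i-1}$, and (b) the lift to $G_{i-1}$ of a shortest cycle of the suppressed cubic multigraph $H$. Only (b) is guaranteed to pass through $O(\log n_3(G_{i-1}))$ degree-$\geq 3$ vertices via the Moore bound. A shortest cycle of $G_{i-1}$ need not: if $G_{i-1}$ has long subdivided paths on all of its short cycles in $H$ except on some long cycle of $H$ whose edges are not subdivided, then the shortest cycle of $G_{i-1}$ corresponds to the long cycle of $H$ and can contain an unbounded number of degree-$3$ vertices. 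Conversely, the ``at most one neighbour on $C_i$'' claim as you justify it uses that $C_i$ is a shortest cycle of $G_{i-1}$, which (b) is not. The fix is to work consistently with (b): since the shortest cycle $C_H$ of $H$ is induced in $H$ (and every edge of $G_{i-1}$ from $V(C_i)$ to the outside emanates from a degree-$3$ vertex of $C_i$), an argument entirely inside $H$ shows that when $|V(C_H)|\geq 5$ no outside vertex of $H$ has two neighbours on $C_H$. As written, though, your two statements pull in different directions and neither alone supports the recurrence. A smaller point: ``two such neighbours would produce a cycle of length at most $4$'' is not right; the produced cycle need only be shorter than $C_i$, not of length $\leq 4$. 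Finally, you openly flag that the cleanup cascade accounting and the recovery of the explicit constant $s_k = 4k(\log k + \log\log k + 4)$ are left unverified; that accounting (e.g.\ via an Euler-characteristic count on the suppressed cubic multigraph) is genuinely the bulk of the work in Simonovits' proof, so this is a real, not merely cosmetic, gap.
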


Lastly, we present lemmas which are useful for detecting a hole.

\begin{LEM}\label{lem:twopaths}
Let $H$ be a graph and $x,y\in V(H)$ be  two distinct vertices. Let $P$ and $Q$ be internally vertex-disjoint $(x,y)$-paths such that $Q$ contains an internal vertex $w$ having no neighbor in $V(P)\setminus \{x,y\}$. If $Q$ is an induced path, then $H[V(P)\cup V(Q)]$ has a hole containing $w$.
\end{LEM}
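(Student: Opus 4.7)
My approach is to start from the cycle $C_0$ formed by $P$ followed by the reverse of $Q$, which already passes through $w$, and then repeatedly shortcut it through chords until no chord remains, while carefully keeping the two $Q$-edges incident to $w$ in the cycle so that $w$ itself is never cut out.

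The key preliminary observation is that $w$ has degree exactly two in $H[V(P)\cup V(Q)]$, and its two neighbors there are its two neighbors $w^-,w^+$ on $Q$. Indeed, the hypothesis rules out any neighbor of $w$ in $V(P)\setminus\{x,y\}$; the fact that $Q$ is induced rules out any neighbor of $w$ in $V(Q)\setminus\{w^-,w^+\}$; and if $w$ were adjacent to $x$ or $y$, the corresponding edge would also lie in $V(Q)$ and, by the same induced-path assumption, must already be one of $ww^-,ww^+$. The same assumption yields $w^-w^+\notin E(H)$, since these two vertices are non-adjacent along $Q$.

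I then pick, among all cycles in $H[V(P)\cup V(Q)]$ that contain both edges $ww^-$ and $ww^+$, a cycle $C'$ with the smallest number of vertices; such $C'$ exists because $C_0$ qualifies. To see that $C'$ is induced, let $uv$ be a chord: then $u,v\neq w$, because $w$'s only two neighbors in $H[V(P)\cup V(Q)]$ are $w^-,w^+$ and both incident edges already belong to $C'$. Combining $uv$ with the $uv$-arc of $C'$ that contains $w$ yields a strictly shorter cycle still using $ww^-$ and $ww^+$, contradicting minimality. Finally $\abs{V(C')}\geq 4$, because $C'$ contains the three distinct vertices $w^-,w,w^+$ and the non-edge $w^-w^+$ forces the $(w^-,w^+)$-arc of $C'$ not through $w$ to use at least one further vertex. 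Hence $C'$ is a hole through $w$.

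No part of this is really difficult; the only subtle point is the degree computation for $w$, where both hypotheses (the absence of $w$-neighbors in $V(P)\setminus\{x,y\}$ and the inducedness of $Q$) are needed simultaneously. Once that is in place, the constraint of preserving the edges $ww^-$ and $ww^+$ guarantees that chord-shortcutting cannot accidentally remove $w$ from the minimal cycle, and the rest is standard.
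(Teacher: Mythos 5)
Your proof is correct and follows essentially the same approach as the paper: both hinge on the observation that $w$ has exactly the two neighbors $w^-,w^+$ inside $H[V(P)\cup V(Q)]$, and that $w^-w^+\notin E(H)$, and then extract an induced cycle through $w$ (the paper takes a shortest $(w^-,w^+)$-path in $H[V(P)\cup V(Q)]-w$ and closes it with $w$, which is the same as your minimum-length cycle through the two edges $ww^-$, $ww^+$).
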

\begin{proof}
Let $x_1$ and $x_2$ be the neighbors of $w$ in $Q$. As $xPy\odot yQx$ is a cycle, $H[V(P)\cup V(Q)]-w$ is connected.
Let $R$ be a shortest $(x_1, x_2)$-path in $H[V(P)\cup V(Q)]-w$. As the only neighbors of $w$  contained in $(V(P)\cup V(Q)) \setminus \{w\}$ are $x_1$ and $x_2$, 
$w$ has no neighbors in the internal vertices of $R$. Note that $R$ has length at least $2$ since $x_1,x_2\in V(Q)$ and $Q$ is an induced path.
Therefore, $x_1Rx_2\odot x_2wx_1$ is a hole containing $w$, as required.
\end{proof}

A special case of Lemma~\ref{lem:twopaths} 
is when there is a vertex $w$ in a cycle $C$ such that $w$ has no neighbors in the internal vertices of $C-w$ and the 
neighbors of $w$ on $C$ are non-adjacent. 
In this case, $C$ has a hole containing $w$ by Lemma~\ref{lem:twopaths}.

One can test in polynomial time whether a graph contains a hole or not.
\begin{LEM}\label{lem:detectinghole}
Given a graph $G$, one can test in polynomial time whether it has a hole or not.
Furthermore, one can find in polynomial time a shortest hole of $G$, if one exists.
\end{LEM}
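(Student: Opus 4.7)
The plan has two parts: first decide whether $G$ has a hole at all, then locate a shortest one if it exists.

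For the decision part, observe that $G$ has no hole if and only if $G$ is chordal. Chordality is decidable in linear time by the classical algorithm of Rose, Tarjan and Lueker based on lexicographic BFS (or maximum cardinality search), so the yes/no question is settled immediately.

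For finding a shortest hole, my plan is to enumerate a natural set of candidate holes and return the shortest. For every vertex $v\in V(G)$ and every pair $a,b\in N_G(v)$ with $ab\notin E(G)$, compute via breadth-first search a shortest $(a,b)$-path $P_{v,a,b}$ in the graph $G-(N_G[v]\setminus\{a,b\})$; if such a path exists, record the cycle $C_{v,a,b}:=aP_{v,a,b}b\odot bva$ of length $\abs{E(P_{v,a,b})}+2$. Output a cycle $C_{v,a,b}$ of minimum length over all triples. The number of triples is $O(n^3)$ and each BFS is polynomial, so the running time is polynomial.

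Correctness splits into two easy checks. First, every candidate $C_{v,a,b}$ is an induced cycle of length at least $4$: the path $P_{v,a,b}$ is induced because it is a shortest path; $v$ has no other neighbor in $V(C_{v,a,b})$ because all vertices of $N_G(v)\setminus\{a,b\}$ were deleted before the BFS; and the length is at least $4$ because $a,b$ are non-adjacent so $P_{v,a,b}$ has length at least $2$. Second, the shortest such candidate has length equal to the length of a shortest hole. To see this, let $C^*$ be a shortest hole of $G$, of length $\ell$, and let $v$ be any vertex of $C^*$ with neighbors $a,b$ on $C^*$; since $C^*$ is induced and has length $\geq 4$, $a$ and $b$ are non-adjacent, and $v$ has no other neighbors on $C^*$, so the $(a,b)$-arc of $C^*-v$ is an $(a,b)$-path of length $\ell-2$ contained in $G-(N_G[v]\setminus\{a,b\})$. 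Hence the BFS for the triple $(v,a,b)$ returns a path of length at most $\ell-2$, producing a hole of length at most $\ell$, and minimality of $C^*$ forces equality.

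I do not anticipate a real obstacle here; the only thing to be careful about is the deletion of $N_G[v]\setminus\{a,b\}$ before the BFS, which is exactly what guarantees that the recovered cycle is chordless rather than merely a cycle.
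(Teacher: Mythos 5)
Your proposal is correct and follows essentially the same approach as the paper's proof: the paper also enumerates triples consisting of a vertex $w$ and two non-adjacent neighbors $v,z$ of $w$, searches for a shortest $(v,z)$-path in $G-(N_G[w]\setminus\{v,z\})$, and returns the shortest hole found over all triples. Your preliminary chordality check via lexicographic BFS is an optional extra (the enumeration itself already decides existence), and your explicit argument that the minimum over all triples equals the true shortest-hole length makes a step the paper leaves implicit.
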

\begin{proof}
We guess three vertices $v,w,z$ where $vw, wz\in E(G)$ and $vz\notin E(G)$, and 
test whether there is a path from $v$ to $z$ in $G-(N_G[w]\setminus \{v,z\})$.
If there is such a path, then we choose a shortest path $P$ from $v$ to $z$. As $w$ has no neighbors in the set of internal vertices of $P$, 
$V(P)\cup \{w\}$ induces a hole. Clearly if $G$ has a hole, then we can find one by the above procedure.

To find a shortest one, for every such a guessed tuple $(v,w,z)$, we keep the length of the obtained hole.
Then it is sufficient to output a hole with minimum length among all obtained holes.
\end{proof}

\section{Terminology and a  proof overview}\label{sec:overview} 

The proof of Theorem~\ref{thm:main} begins by finding a sequence of shortest holes. Let $G$ be the input graph and let $G_1=G$. 
For each $i=1,2,\ldots$, we iteratively find a shortest hole $C_i$ in $G_i$ and set $G_{i+1}:=G_i - V(C_i)$. If the procedure fails to find a hole at $j$-th iteration, then $G_j$ is a chordal graph. 
This iterative procedure leads us to the following theorem, which is the core component of our  result.

For $k\in \mathbb{N}$, we define
$\mu_k=76s_{k+1}+3217k+1985$.

\begin{THM}\label{thm:core}
Let $G$ be a graph, $k$ be a positive integer, and $C$ be a shortest hole of $G$
such that $C$ has length strictly larger than $\mu_k$ and $G-V(C)$ is chordal. Given such $G$, $k$, and $C$, one can find in polynomial time either $k+1$ vertex-disjoint holes or a vertex set $X\subseteq V(G)$ of size at most $\mu_{k}$ hitting every hole of $G$.
\end{THM}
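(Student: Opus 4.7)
The plan is to exploit two ingredients simultaneously: first, the shortest-hole property of $C$ sharply controls how any other hole of $G$ may leave $C$ and return; second, the chordality of $G-V(C)$ forces every hole other than $C$ to intersect $V(C)$ and to look like a cyclic alternation of arcs of $C$ and ``bridging'' paths through the chordal complement. The quantitative lever is the hypothesis $|V(C)|>\mu_k=\Theta(k\log k)$, which can be spent either to pack $k+1$ vertex-disjoint holes around $C$ or to build a small hitting set lying on (or within a bounded-radius neighborhood of) $C$.

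The first phase, corresponding to Section~\ref{sec:lemmas}, is to set up structural lemmas about how a hole $H\neq C$ meets $C$. The key observation is that if an induced $(V(C),V(C))$-path $P$ through $G-V(C)$ with endpoints $u,v\in V(C)$ participates in a hole, then the length of $P$ is constrained in terms of $\dist_C(u,v)$: a genuine ``shortcut'' together with the shorter arc of $C$ would yield, via Lemma~\ref{lem:twopaths} applied to the combined cycle, a hole strictly shorter than $C$, contradicting minimality. Consequently, the attachment footprints of holes on $C$ are essentially local, and one can also bound the number and spread of neighbors of an outside bridging path on $C$.

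The second phase uses this structure to bucket holes by their attachment pattern. Cluster the attachment points on $C$ into short arcs of constant radius (using $N^r$), and classify holes according to the cyclic sequence of such arcs they traverse together with the bridging subgraphs through $G-V(C)$. A sunflower-type argument in Section~\ref{sec:hittingsunflower} then handles holes with a common ``core'' of attachment arcs: either Menger's Theorem~\ref{thm:menger} on a bipartite auxiliary graph in the chordal complement yields $k+1$ holes with pairwise disjoint petals, giving $k+1$ vertex-disjoint holes, or the core and its bounded-size $C$-neighborhood become part of a small hitting set. Holes not caught by the sunflower mechanism are ``spread'' along $C$, and in Section~\ref{sec:tulip} the length of $C$ together with Theorem~\ref{thm:simonovitz} on degree-$3$ graphs can be invoked — one constructs an auxiliary cubic graph whose vertices correspond to short arcs of $C$ and whose cycles witness vertex-disjoint holes of $G$ — to extract the required $k+1$ vertex-disjoint holes whenever $|V(C)|>\mu_k$.

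The main obstacle will be two-fold. First, making the dichotomy between ``concentrated'' and ``spread'' holes clean in the presence of the chordal complement: chordality admits arbitrarily long induced paths, so controlling how two attachments interact requires exploiting minimal $(V(C),V(C))$-separators in a chordal graph and the tree structure they induce, rather than trivial disjointness. Second, accounting for the gap function so that each contribution — the sunflower core, the buffer arcs on $C$, and extra vertices picked from $G-V(C)$ — adds up to at most $\mu_k=76s_{k+1}+3217k+1985$ in the worst case; this forces the constants in each step of Sections~\ref{sec:hittingsunflower} and \ref{sec:tulip} to be tracked carefully, and is where most of the technical work of the proof will concentrate.
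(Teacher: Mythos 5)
Your high-level framing — use the shortest-hole property of $C$ to localize how other holes attach to $C$, then spend the hypothesis $|V(C)|>\mu_k$ either to pack $k+1$ holes or to build a small hitting set near $C$ — matches the paper's overall strategy, and you are right that Lemma~\ref{lem:twopaths}, Menger, and Theorem~\ref{thm:simonovitz} are the main external tools. However, several of the load-bearing ideas are either missing or replaced by alternatives that would not work.

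First, you do not account for the $C$-dominating vertices (the set $D$ of vertices adjacent to \emph{all} of $C$). These vertices break the locality principle that underlies your entire sketch: a hole containing a vertex of $D$ does not have a short attachment footprint on $C$, and nothing in the shortest-hole argument bounds their interaction with $C$. The paper must handle $D$-traversing holes by a completely separate bipartite-matching argument (Propositions~\ref{prop:skew} and Lemma~\ref{lem:dominatingtulip2}), and your proposal would silently fail on inputs where $D\neq\emptyset$. Second, the dichotomy the paper actually uses is not ``concentrated vs.\ spread attachment on $C$'' but ``sunflowers'' ($V(H)\subseteq N[C]$) vs.\ ``tulips'' ($H$ escapes $N[C]$); a full sunflower is spread all the way around $C$ yet still lives in $N[C]$, and it is hit by an entirely different mechanism (Proposition~\ref{prop:hitsunflower}) than the petals/concentrated holes. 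Third, your description of the Simonovits step — an auxiliary cubic graph whose vertices are short arcs of $C$ — is not what the paper does and I do not see how to make it work: abstracting to arcs loses the information needed to certify that a cycle in the auxiliary graph corresponds to a \emph{hole} (an induced cycle), rather than just a cycle, of $G$. The paper instead builds a concrete $(2,3)$-subgraph $W$ of $G_{deldom}$ starting from $C$ by iteratively adding ``$W$-extensions'', paths satisfying the carefully chosen condition that the second vertex from $C$ on the new path has no neighbor in the subgraph built so far and the interior of the path is induced; these conditions are precisely what allow Lemma~\ref{lem:twopaths} to extract a hole from every cycle of $W$. Without this invariant, Simonovits gives you $k+1$ cycles in some auxiliary graph but no $k+1$ holes in $G$.

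Finally, your proposed reliance on ``minimal $(V(C),V(C))$-separators in a chordal graph and the tree structure they induce'' is a dead end relative to the paper's proof. Chordality of $G-V(C)$ is used only negatively — to conclude that every hole meets $V(C)$ and, in the tulip rerouting step (Lemma~\ref{lem:tunnellemma4}), to conclude that a rerouted hole is not a sunflower — never via clique trees or the separator tree structure. The paper's control over how bridging paths in $G-V(C)$ interact is instead achieved through the $W$-extension and $Q$-tunnel machinery (Lemmas~\ref{lem:distance2}--\ref{lem:tunnellemma4}), which are entirely combinatorial consequences of the shortest-hole property and of $W$ being maximal, not of chordal structure theory.
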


It is easy to derive our main result from Theorem~\ref{thm:core}.

\smallskip

\begin{proofof}{Theorem~\ref{thm:main}}
We construct sequences $G_1,\ldots, G_{\ell+1}$ and $C_1,\ldots , C_{\ell}$ 
such that 
\begin{itemize}
\item $G_1=G$, 
\item for each $i\in \{1, 2, \ldots, \ell\}$, $C_i$ is a shortest hole of $G_i$,  
\item for each $i\in \{1, 2, \ldots, \ell\}$, $G_{i+1}=G_i-V(C_i)$, and
\item $G_{\ell+1}$ is chordal.
\end{itemize}
Such a sequence can be constructed in polynomial time repeatedly applying Lemma~\ref{lem:detectinghole} to find a shortest hole.
If $\ell \geq k+1$, then we have found a  packing of $k+1$ holes. 
Hence, we assume that $\ell \leq k$. 

We prove the following claim for $j=\ell+1$ down to $j=1$. 
\begin{quote}
One can find in polynomial time either $k+1$ vertex-disjoint holes, or 
a chordal deletion set $T_{j}$ of $G_{j}$ of size at most $(\ell+1-j)\mu_k$. 
\end{quote}

The claim trivially holds for $j=\ell+1$ with $T_{\ell+1}=\emptyset$ because $G_{\ell+1}$ is chordal.
Let us  assume that for some $j\leq \ell$, we obtained a chordal deletion set $T_{j+1}$ of $G_{j+1}$ of size at most $(\ell-j)\mu_k$.
Then in $G_{j}-T_{j+1}$, $C_{j}$ is a shortest hole, and $\left( G_{j}-T_{j+1} \right)-V(C_{j})$ is chordal.
If $C_{j}$ has length at most $\mu_k$, then we set $T_{j}:=T_{j+1}\cup V(C_{j})$. Clearly, $\abs{T_j}\leq (\ell-j+1)\mu_k$.
Otherwise, by applying Theorem~\ref{thm:core} to $G_{j}-T_{j+1}$ and $C_{j}$, 
one can find in polynomial time either $k+1$ vertex-disjoint holes or a chordal deletion set $X$ of size at most $\mu_k$ of $G_{j}-T_{j+1}$. 
In the former case, we output $k+1$ vertex-disjoint holes, and we are done. If we obtain a chordal deletion set $X$, then we set $T_{j}:=T_{j+1}\cup X$. Observe that the set $T_{j}$ is a chordal deletion set of $G_{j}$ and $\abs{T_{j}}\le (\ell-j+1)\mu_k$ as claimed. 

From the claim with $j=1$, we conclude that 
in polynomial time, one can find either $k+1$ vertex-disjoint holes, 
or a chordal deletion set of $G_1=G$ of size at most $\ell\mu_k\le k\mu_k$. 
So, in the latter case, there exist constants $c_1, c_2$ such that $G$ admits a chordal deletion set of size $c_1k^2\log k +c_2$ ($c_2$ is necessary for $k=1$).  
\end{proofof}

\begin{figure}
  \centering
  \begin{tikzpicture}[scale=0.7]
  \tikzstyle{w}=[circle,draw,fill=black!50,inner sep=0pt,minimum width=4pt]

   \draw (-2,0)--(11,0);
	\draw(-2, 0)--(-3,-0.5);
	\draw(11, 0)--(12,-0.5);
      \draw (-3,-.5) node [w] {};
       \draw (12,-.5) node [w] {};
 	\draw[dashed](13, -1)--(12,-0.5);
	\draw[dashed](-4,-1)--(-3,-0.5);
 
 \foreach \y in {-2,...,11}{
      \draw (\y,0) node [w] (a\y) {};
     }
 \foreach \y in {2,3,4}{
	\draw (3,2)--(a\y);
    }
 \foreach \y in {4,5}{
	\draw (4.5,2)--(a\y);
    }
    \draw(3,2)--(4.5,2);

 \foreach \y in {7,8,9}{
	\draw (8,2)--(a\y);
    }
       \draw (3,2) node [w] {};
       \draw (8,2) node [w] {};
       \draw (4.5,2) node [w] {};

\draw[dashed, rounded corners] (4, 2.3)--(5, 2.3)--(4, -.5)--(2.5, 2.3)--(4, 2.3);

\draw[rounded corners] (-3,3)--(-3,2)--(0,2)--(0,4)--(-3,4)--(-3,3);
 \foreach \y in {-2,-1.5,-1, -.5,0}{
	\draw (-1.5,2.5)--(\y, 1.5);
    }
       \draw (-1.5, 2.5) node [w] {};

     \node at (-1.5, 3) {$D$};
     \node at (-2, -1) {$C$};
     \node at (4, 2.7) {$Z_v$};
     \node at (4, -.8) {$v$};

   \end{tikzpicture}     \caption{The set of vertices adjacent to all vertices of $C$ is denoted by $D$, and for each $v\in V(C)$, $Z_v$ 
   denotes the set $\{v\}\cup (N(v)\setminus V(C)\setminus D)$. Using the fact that $C$ is chosen as a shortest hole and it is long, we will prove in Lemma~\ref{lem:consecutive} that each vertex in $N(C)\setminus D$ has at most $3$ neighbors on $C$ and they are consecutive in $C$. }\label{fig:setting}
\end{figure}
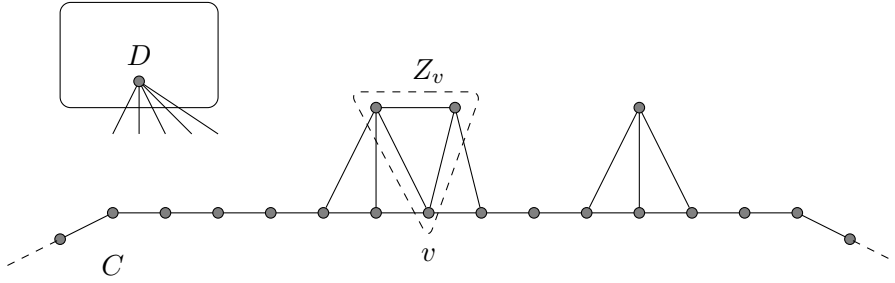

\medskip

The rest of this section and Sections~\ref{sec:lemmas}-\ref{sec:tulip} are devoted to establish Theorem~\ref{thm:core}. 
Throughout these sections, we shall consider the input tuple $(G,k, C)$ of Theorem~\ref{thm:core} as fixed. 

Let us introduce the notations that are frequently used (see Figure~\ref{fig:setting}).
A vertex $v\in N(C)$ is \emph{$C$-dominating} if $v$ is adjacent to every vertex on $C$.
We reserve $D$ to denote the set of all $C$-dominating vertices.
For each vertex $v$ in $C$, we denote by $Z_v:=\{v\}\cup (N(v)\setminus V(C)\setminus D )$, and 
for a subset $S$ of $V(C)$, we denote by $Z_S:=\bigcup_{v\in S}Z_v$.
We also define
\begin{itemize}
\item $G_{deldom}:=G-D$ and $G_{nbd}:=G[N[C]\setminus D]$. 
\end{itemize}
For a subpath $Q$ of $C$, the subgraph of $G$ induced by $Z_{V(Q)}$ is called a \emph{$Q$-tunnel}. 
By definition of $Z_{V(Q)}$, a $Q$-tunnel is an induced subgraph of $G_{nbd}$. 
When $q, q'$ are endpoints of $Q$, we say that $Z_{q}$ and $Z_{q'}$ are \emph{entrances} of the $Q$-tunnel.

For a subgraph $H$ of $G$, the \emph{support} of $H$, denoted by $\spp (H)$, is the set of all vertices  $v\in V(C)$ such that $(Z_v\cup D)\cap V(H)\neq\emptyset$. 
Observe that if $H$ contains a vertex of $D$, then trivially $\spp(H)=V(C)$.

We distinguish between two types of holes, namely \emph{sunflowers} and \emph{tulips}. 
A hole $H$ is said to be a \emph{sunflower} if $V(H)\subseteq N[C]$, that is, its entire vertex set is placed within the closed neighborhood of $C$.
A hole that is not a sunflower is called a \emph{tulip}. Every tulip contains at least one vertex not contained in $N[C]$.
Also, we classify holes depending on whether one contains a $C$-dominating vertex or not.
A hole is \emph{$D$-traversing} it contains a $C$-dominating vertex (which is a vertex of $D$), and \emph{$D$-avoiding} otherwise.

In the remainder of this section, we present a proof outline of Theorem~\ref{thm:core}. 
Here are three basic observations, necessary to give the ideas of our proofs.
\begin{itemize}
\item (Lemma~\ref{lem:consecutive})
For every vertex $v$ of $N(C)$, either it has at most $3$ neighbors in $C$ and they are consecutive in $C$, or it is $C$-dominating.
\item (Lemma~\ref{lem:farnonadj})
Let $x,y$ be two vertices in $C$ such that $\dist_C(x,y)\geq 4$. Then there is no edge between $Z_x$ and $Z_y$. See Figure~\ref{fig:distance} for an illustration.
\item (Lemma~\ref{lem:connectedsupport})
Let $H$ be a connected subgraph in $G_{nbd}$. Then $C[\spp(H)]$ is connected.
\item (Lemma~\ref{lem:dominating})
$D$ is a clique.
\end{itemize}

\begin{figure}
  \centering
  \begin{tikzpicture}[scale=0.7]
  \tikzstyle{w}=[circle,draw,fill=black!50,inner sep=0pt,minimum width=4pt]

   \draw (-2,0)--(11,0);
	\draw(-2, 0)--(-3,-0.5);
	\draw(11, 0)--(12,-0.5);
      \draw (-3,-.5) node [w] {};
       \draw (12,-.5) node [w] {};
 	\draw[dashed](13, -1)--(12,-0.5);
	\draw[dashed](-4,-1)--(-3,-0.5);
 
 \foreach \y in {-2,...,11}{
      \draw (\y,0) node [w] (a\y) {};
     }

      \draw[dashed](8,2) [in=30,out=150] to (3,2);

 \foreach \y in {2,3,4}{
	\draw (3,2)--(a\y);
    }
 
 \foreach \y in {7,8,9}{
	\draw (8,2)--(a\y);
    }
       \draw (3,2) node [w] {};
       \draw (8,2) node [w] {};

\draw[rounded corners] (-3,3)--(-3,2)--(0,2)--(0,4)--(-3,4)--(-3,3);
 \foreach \y in {-2,-1.5,-1, -.5,0}{
	\draw (-1.5,2.5)--(\y, 1.5);
    }
       \draw (-1.5, 2.5) node [w] {};

     \node at (-1.5, 3) {$D$};
     \node at (-2, -1) {$C$};
     \node at (2, -.8) {$x$};
	\node at (9, -.8) {$y$};
    \node at (2.6, 2.4) {$v$};
    \node at (8.4, 2.4) {$w$};
 
   \end{tikzpicture} \caption{Illustration of Lemma~\ref{lem:farnonadj}: if $\dist_C(x,y)\ge 4$, then there are no edges between $Z_x$ and $Z_y$.
   For instance, suppose $v$ and $w$ are adjacent. 
   Note that $v$ and $w$ have at most $3$ consecutive neighbors in $C$.
   If the distance from $N(v)\cap V(C)$ to $N(w)\cap V(C)$ in $C$ is at least $1$, then we can find a hole shorter than $C$ using the shortest path from $N(v)\cap V(C)$ to $N(w)\cap V(C)$ in $C$.
   Otherwise, we have $\dist_C(x,y)=4$ and $\abs{N(v)\cap V(C)}=\abs{N(w)\cap V(C)}=3$ and thus, the longer path between $N(v)\cap V(C)$ to $N(w)\cap V(C)$ in $C$ creates a hole shorter than $C$.}\label{fig:distance}
\end{figure}
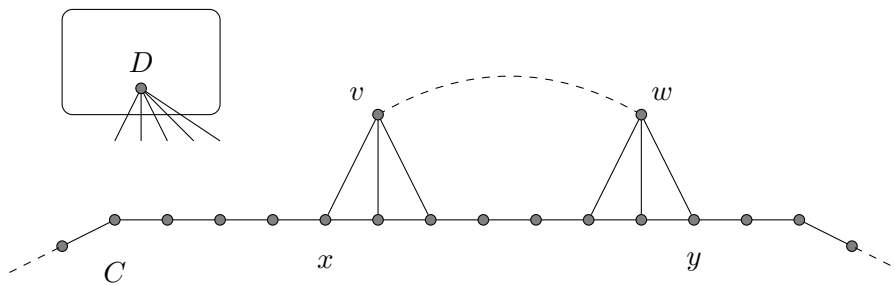

\subsection{$D$-avoiding sunflowers.} 

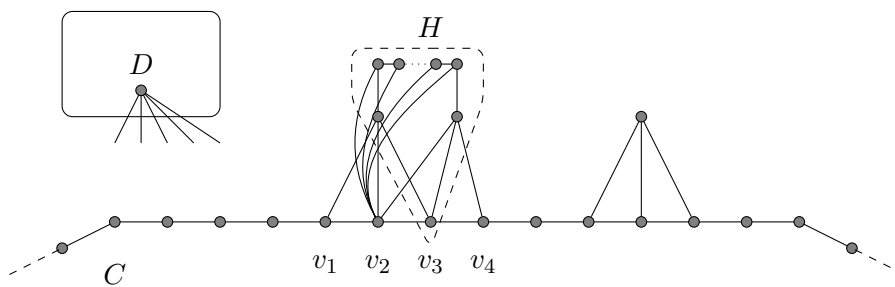
\begin{figure}
  \centering
  \begin{tikzpicture}[scale=0.7]
  \tikzstyle{w}=[circle,draw,fill=black!50,inner sep=0pt,minimum width=4pt]

	     \draw(3,3) [in=120,out=-120] to (3,0);
	     \draw(3.4,3) [in=120,out=-120] to (3,0);
	     \draw(4.1,3) [in=120,out=-140] to (3,0);
	     \draw(4.5,3) [in=120,out=-140] to (3,0);
	 \draw(4.5,2) -- (3,0);

   \draw (-2,0)--(11,0);
	\draw(-2, 0)--(-3,-0.5);
	\draw(11, 0)--(12,-0.5);
      \draw (-3,-.5) node [w] {};
       \draw (12,-.5) node [w] {};
 	\draw[dashed](13, -1)--(12,-0.5);
	\draw[dashed](-4,-1)--(-3,-0.5);
 
 \foreach \y in {-2,...,11}{
      \draw (\y,0) node [w] (a\y) {};
     }
 \foreach \y in {2,3,4}{
	\draw (3,2)--(a\y);
    }
 \foreach \y in {4,5}{
	\draw (4.5,2)--(a\y);
    }
    \draw (3,2)--(3,3);
    \draw(4.5,3)--(4.5,2);
	\draw(3,3)--(3.4,3);\draw[dotted](3.4,3)--(4.1,3);\draw(4.1,3)--(4.5,3);
	     
 \foreach \y in {7,8,9}{
	\draw (8,2)--(a\y);
    }
       \draw (3,2) node [w] {};
       \draw (3,3) node [w] {};
       \draw (8,2) node [w] {};
       \draw (4.5,2) node [w] {};
       \draw (4.5,3) node [w] {};

	   \draw (3.4,3) node [w] {};
       \draw (4.1,3) node [w] {};
       
\draw[dashed, rounded corners] (4, 3.3)--(5,3.3)--(5, 2.3)--(4, -.5)--(2.5, 2.3)--(2.5,3.3)--(4, 3.3);

\draw[rounded corners] (-3,3)--(-3,2)--(0,2)--(0,4)--(-3,4)--(-3,3);
 \foreach \y in {-2,-1.5,-1, -.5,0}{
	\draw (-1.5,2.5)--(\y, 1.5);
    }
       \draw (-1.5, 2.5) node [w] {};

     \node at (-1.5, 3) {$D$};
     \node at (-2, -1) {$C$};
     \node at (4, 3.7) {$H$};

     \node at (2, -.8) {$v_1$};
     \node at (3, -.8) {$v_2$};
     \node at (4, -.8) {$v_3$};
     \node at (5, -.8) {$v_4$};

   \end{tikzpicture}     \caption{The cycle $H$ is a petal having the support $\{v_1, v_2, v_3, v_4\}$. A petal can be arbitrarily long.}\label{fig:petal}
\end{figure}

The set of $D$-avoiding sunflowers is categorized into two subgroups, \emph{petals} and \emph{full sunflowers}.
Note that by Lemma~\ref{lem:connectedsupport}, the support of every $D$-avoiding sunflower consists of consecutive vertices of $C$. 
For a $D$-avoiding sunflower $H$, we say that 
\begin{itemize}
\item it is a \emph{petal} if $\abs{\spp(H)}\leq 7$,  and 
\item it is \emph{full} if $\spp(H)=V(C)$.
\end{itemize}
See Figure~\ref{fig:petal} for an illustration of a petal.
\medskip

\noindent {[Subsection~\ref{subsec:petal}.]} We first obtain a small hitting set of petals, unless $G$ has  $k+1$ vertex-disjoint holes.
For this, we greedily pack petals and mark their supports on $C$.
Clearly, if there are $k+1$ petals whose supports are pairwise disjoint,  then we can find $k+1$ vertex-disjoint holes. 
Thus, we can assume that there are at most $k$ petals whose supports are pairwise disjoint.  
We take the union of all those supports and call it $T_1$. By construction, for every petal $H$, $\spp(H)\cap T_1\neq\emptyset$.
Then we take the $6$-neighborhood of $T_1$ in $C$ and call it $T_{petal}$.
It turns out that 
\begin{itemize}
\item[($\ast$)] for every petal $H$, $\spp (H)$ is fully contained in $T_{petal}$, 
\end{itemize} 
and in particular, $V(H)\cap T_{petal}\neq \emptyset$. The size of $T_{petal}$ is at most $19k$.
\medskip

\noindent {[Subsection~\ref{subsec:allisfull}.]} 
Somewhat surprisingly, we show that every $D$-avoiding sunflower that does not intersect $T_{petal}$ is a full sunflower.  
It is possible that there is a sunflower with support of size at least $8$ and less than $\abs{V(C)}$.
We argue that if such a sunflower $H$ exists, then there is a vertex $v\in V(C)\cap V(H)$ and a petal whose support contains $v$. 
But the property $(\ast)$ of $T_{petal}$ implies that $T_{petal}$ contains $v$, which implies that such a sunflower should be hit by $T_{petal}$.
Therefore, it is sufficient to hit full sunflowers for hitting all remaining $D$-avoiding sunflowers.
\medskip

\noindent {[Subsection~\ref{subsec:sunflowerwithout}.]} 
We obtain a small hitting set of full sunflowers, when $G$ has no $k+1$ vertex-disjoint holes.
For this, we consider two vertex sets $Z_v$ and $Z_w$ for some $v$ and $w$ on $C$, 
and apply Menger's theorem for two directions, say `north' and `south' hemispheres around $C$, between $Z_v$ and $Z_w$ in the graph $G_{nbd}$.
We want to argue that if there are many paths in both directions, then we can find many vertex-disjoint holes. 
However, it is not clear how to link  two families of paths.

To handle this issue, we find two families of paths whose supports cross on constant number of vertices. 
Since $C$ is much larger than the obtained hitting set $T_{petal}$ for petals, we can find $25$ consecutive vertices that contain no vertices in $T_{petal}$.
Let $v_{-2}, v_{-1}, v_0, \ldots, v_{22}$ be such a set of consecutive vertices.
Let $\mathcal{P}$ be the family of vertex-disjoint paths from $Z_{v_0}$ to $Z_{v_{20}}$ whose supports are contained in $\{v_{-2}, v_{-1}, v_0, v_1, \ldots, v_{20}, v_{21}, v_{22}\}$, and let $\mathcal{Q}$ be the family of vertex-disjoint paths from $Z_{v_5}$ to $Z_{v_{16}}$ whose supports do not contain $v_8$ and $v_{13}$. 
Non-existence of petals with support intersecting $\{v_{-2}, v_{-1}, \ldots, v_5\}$ implies that 
paths in $\mathcal{P}$ and $\mathcal{Q}$ are `well-linked' at $Z_{v_0}$ except for few paths, and a symmetric argument holds at $Z_{v_{20}}$. 
This allows us to link any pair of paths from $\mathcal{P}$ and $\mathcal{Q}$. 
If one of $\mathcal{P}$ and $\mathcal{Q}$ is small, then we can output a hitting 
set of full sunflowers using Menger's theorem. The size of the obtained set $T_{full}$ will be at most $3k+14$.

\subsection{$D$-traversing sunflowers}
[Subsection~\ref{subsec:sunflowerwith}.] 
It is easy to see that every $D$-traversing hole $H$ contains exactly one vertex of $D$ (since $D$ is a clique and $H$ contains a vertex of $C$), and every vertex of $V(C)\cap V(H)$ is a neighbor  of the vertex in $D\cap V(H)$. Let $v\in V(C)\cap V(H)$ and $d\in D\cap V(H)$ be such an adjacent pair.
We argue that $H$ contains a subpath $Q$ in $N[C]\setminus D$ that starts from $v$ and is contained in $Z_{\{v, v_2, v_3\}}$ for some three consecutive vertices $v, v_2, v_3$ of $C$, such that
\begin{itemize}
\item $G[V(Q)\cup \{v, v_2, v_3, d\}]$ contains a $D$-traversing sunflower containing $d$ and $v$.
\end{itemize}
 In other words, even if $H-d$ has large support, we can find another $D$-traversing sunflower $H'$ 
 containing $d$ and $v$ where $H'-d$ has support on small number of vertices. 
 The fact that $H$ and $H'$ share $v$ is important as we will take one of $d$ and $v$ as a hitting set for such $H'$, and this will hit $H$ as well.

To this end, we create an auxiliary bipartite graph in which one part is $D$ and the other part consists of sets of $3$ consecutive vertices $v_1, v_2, v_3$ of $C$, and we add an edge 
between $d\in D$ and $\{v_1, v_2, v_3\}$ if $G[Z_{\{v_1, v_2, v_3\}}\cup \{d\}]$ contains a $D$-traversing hole.
We argue that if this bipartite graph has a large matching, then we can find many vertex-disjoint holes, 
and otherwise, we have a small vertex cover. The union of all vertices involved in the vertex cover suffices 
to cover all $D$-traversing sunflowers. The hitting set $T_{trav:sunf}$ will have size at most $15k+9$.

\subsection{$D$-avoiding tulips.} 
We follow the approach of constructing a subgraph of maximum degree $3$ used in proving the Erd\H{o}s-P\'osa property for various types of cycles: roughly speaking, if there is a  cycle after removing the vertices of degree $3$ in the subgraph constructed so far, we augment the construction by adding some path or cycle.
Simonovitz~\cite{Simonovits1967} first proposed this idea and proved that if the number of degree $3$ vertices is $s_{k+1}$, then there are $k+1$ vertex-disjoint cycles. 
If a maximal construction has less than $s_{k+1}$ vertices of degree 3, then we can hit all cycles of the input graph by taking all vertices of degree $3$ and a few more vertices.
\medskip 

\noindent [Subsection~\ref{subsec:tuliphive}.] The major obstacle for  employing Simonovitz' approach is that for our purpose, we need to guarantee that every cycle of such a construction gives a hole.
For this reason, we will carefully add a path so that every cycle in a construction contains some hole.
We arrive at a notion of an \emph{$F$-extension}, which is a path to be added iteratively with $C$ at the beginning.
By adding $F$-extensions recursively, we will construct a subgraph such that all vertices have degree $2$ or $3$ and it contains $C$.
For a subgraph $F$ of $G_{deldom}$ such that all vertices have degree $2$ or $3$ and it contains $C$, 
an $F$-extension is a proper $V(F)$-path $P$ in $G_{deldom}$, but has additional properties that 
\begin{enumerate}[(i)]
\item both endpoints of $P$ are vertices of degree $2$ in $F$,
\item one of its endpoints should be in $C$, and 
\item $P$ has at least one endpoint in $C$ whose  second neighbor on $P$ has no neighbors in $F$, 
and the path obtained from $P$ by removing its endpoints is induced.
\end{enumerate}
An almost $F$-extension is a similar object, but its endpoints on $F$ are the same. Note that an almost $F$-extension is a cycle and is not an $F$-extension.
We depict an (almost) $F$-extension in Figure~\ref{fig:wextension}.  
When we recursively choose an $F$-extension to add, we apply two priority rules:
\begin{itemize}
\item We choose a shortest $F$-extension among all possible $F$-extensions. 
\item We choose an $F$-extension $Q$ with maximum $\abs{V(Q)\cap V(C)}$ among all shortest $F$-extensions. 
\end{itemize}
Following these rules, we recursively add $F$-extensions until there are no $F$-extensions. 

Let $W$ be the resulting graph.
The properties (ii), (iii) together with Lemma~\ref{lem:twopaths} guarantee that the subgraph induced by the vertex set of every cycle of $W$ contains a hole. Therefore, in case when $W$ contains $s_{k+1}$ vertices of degree $3$, we can find $k+1$ vertex-disjoint holes by Theorem~\ref{thm:simonovitz}. 
We may assume that it contains less than $s_{k+1}$ vertices of degree $3$. 
Let $T_{branch}$ be the set of degree $3$ vertices in $W$.
We also separately argue that we can hit all of almost $W$-extensions by at most $5k+4$ vertices.
Let $T_{almost}$ be the hitting set for almost $W$-extensions.

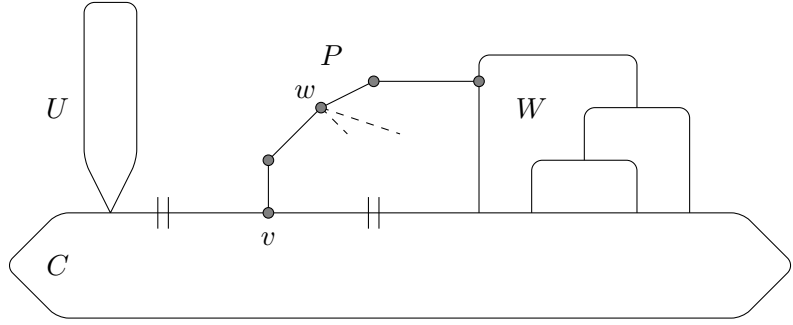
\begin{figure}
  \centering
  \begin{tikzpicture}[scale=0.7]
  \tikzstyle{w}=[circle,draw,fill=black!50,inner sep=0pt,minimum width=4pt]

   \draw[rounded corners] (6,0)--(11,0)--(12,-1)--(11,-2)--(-2,-2)--(-3,-1)--(-2,0)--(6,0);
   
   \draw[rounded corners] (7,0)--(7,1)--(9,1)--(9,0);
   \draw[rounded corners] (8,1)--(8,2)--(10,2)--(10,0);
   \draw[rounded corners] (6,0)--(6,3)--(9,3)--(9,2);

 \foreach \y in {3.5,4.5}{
	\draw[dashed] (3,2)--(\y, 1.5);
    }
   
   \draw (-.1,0.3)--(-.1,-0.3);
   \draw (.1,0.3)--(.1,-0.3);
   \draw (4-.1,0.3)--(4-.1,-0.3);
   \draw (4.1,0.3)--(4.1,-0.3);
   
   \draw (2,0)--(2,1)--(3,2)--(4,2.5);
   \draw (4,2.5)--(6,2.5);
   \draw (2,0) node [w] {};
   \draw (2,1) node [w] {};
   \draw (3,2) node [w] {};
   \draw (4,2.5) node [w] {};
   \draw (6,2.5) node [w] {};
   
     \node at (7, 2) {$W$};
  \node at (-2, 2) {$U$};

    \node at (2, -.5) {$v$};
    \node at (2.7, 2.3) {$w$};
 
	\draw[rounded corners] (-1,0)--(-1.5, 1)--(-1.5, 4)--(-.5,4)--(-.5,1)--(-1,0);
     \node at (3.2, 3) {$P$};

     \node at (-2, -1) {$C$};

   \end{tikzpicture}     \caption{A brief description of the construction $W$. 
   Each extension contains at least one endpoint in $C$ whose second neighbor in the extension 
   has no neighbor in $W$ hitherto constructed.
   For instance, $P$ is a $W$-extension, and $v$ is the vertex in $V(C)\cap V(P)$, and its second neighbor $w$ in $P$ has no neighbors in $W$.
   The subgraph $U$ depicts an almost $W$-extension.}\label{fig:wextension}
\end{figure}

Now, let $T_{ext}$ be the union of 
\[T_{petal}\cup T_{full}\cup T_{trav:sunf}\cup T_{branch}\cup T_{almost}\] and 
\[N^{20}_C[(T_{petal}\cup T_{full}\cup T_{trav:sunf}\cup T_{branch}\cup T_{almost})\cap V(C)].\] 
Note that 
\[ \abs{T_{ext}}\le 41(19k+(3k+14)+(15k+9)+(s_{k+1}-1)+(5k+4))\le 41(s_{k+1}+42k+26). \]

Furthermore, $C-(T_{petal}\cup T_{full}\cup T_{trav:sunf}\cup T_{branch}\cup T_{almost})$ contains at most $s_{k+1}+42k+26$ connected components, and thus $C-T_{ext}$ does as well.
\medskip

\noindent [Subsection~\ref{subsec:cfragment}] We discuss the patterns of the remaining tulips in $G_{deldom}-T_{ext}$.
Since we will add all components of $C-T_{ext}$ having at most $35$ vertices to the deletion set for $D$-avoiding tulips, 
we focus on components of $C-T_{ext}$ with at least $36$ vertices.
Let $H$ be a $D$-avoiding tulip in $G_{deldom}-T_{ext}$.
Let $Q=q_1q_2 \cdots q_m$ be a connected component of $C-T_{ext}$, and we consider the $Q$-tunnel $R$.

We argue that there is no edge $vw$ in $H$ such that 
$v$ is in the $Q$-tunnel, and $w$ is not in $N[C]$. 
See Figure~\ref{fig:qtunnel} for an illustration.
Suppose there is such a pair, and let $q\in V(Q)$ be a neighbor of $v$. We mainly prove that since $q$ is sufficiently far from degree $3$ vertices of $W$ in $C$, $w$ has no neighbors in $W$ 
(this is why we take the $20$-neighborhood of $V(C)\cap (T_{petal}\cup T_{full}\cup T_{trav:sunf}\cup T_{branch}\cup T_{almost})$ in $C$).
This is formulated in Lemma~\ref{lem:distance2}. 
Note that $qvw$ is a path where $q\in V(C)$ and $w$ has no neighbors in $W$, and furthermore, $H$ contains a vertex in $V(C)\setminus T_{ext}$ which is a vertex of degree $2$ in $W$.
Therefore, if we traverse in $H$ following the direction from $v$ to $w$, we meet some vertex having a neighbor which is a vertex of degree $2$ in $W$. This gives a $W$-extension or an almost $W$-extension. But it is a contradiction as there is no $W$-extension, and $T_{ext}$ hits all of almost $W$-extensions.
So, there are no such edges $vw$.

\begin{figure}
  \centering
  \begin{tikzpicture}[scale=0.7]
  \tikzstyle{w}=[circle,draw,fill=black!50,inner sep=0pt,minimum width=4pt]

   \draw[rounded corners] (6,0)--(11,0)--(12,-1)--(11,-2)--(-2,-2)--(-3,-1)--(-2,0)--(6,0);
   
   \draw[rounded corners] (7,0)--(7,1)--(9,1)--(9,0);
   \draw[rounded corners] (8,1)--(8,2)--(10,2)--(10,0);
   \draw[rounded corners] (6,0)--(6,3)--(9,3)--(9,2);
   \draw[rounded corners] (6,2.5)--(-1,2.5)--(-1,0);

	\draw[fill=blue!20, rounded corners] (-2,0)--(-2,.3)--(1,.3)--(1,-.3)--(-2,-.3)--(-2, 0);
	\draw[fill=blue!20, rounded corners] (4,0)--(4,.3)--(7,.3)--(7,-.3)--(4,-.3)--(4, 0);
	
	\draw[red, dotted, very thick, rounded corners] (1,0)--(1,1)--(4,1)--(4,0);
	
	\draw[rounded corners] (0, .5)--(2,.5)--(2.3, -.2)--(2.6,.5)--(3.5,.5)--(3.5,1.5)--(4, 2);
	   \draw (3.45,.5) node [w] {};
      \draw (3.5,1.5) node [w] {};
      \node at (3.45, 0) {$v$};
      \node at (3, 1.5) {$w$};

     \node at (7, 2) {$W$};

     \node at (-2, -1) {$C$};
 \node at (2.5, -1) {$Q$};
 \node at (5.5, -1) {$T_{ext}$};

\draw[arrows=<->](1, -.5)--(4, -.5);

   \end{tikzpicture} 
   \caption{A description of the set $T_{ext}$ and a component $Q$ of $C-T_{ext}$. 
   When there is an edge $vw$ where $v\in Z_{V(Q)}\setminus V(Q)$ and $w\notin N[C]$, 
   we will prove in Lemma~\ref{lem:tunnellemma1} that  $w$ has no neighbors in $W$.
   In particular, if there is a $D$-avoiding tulip containing such an edge, then we can find a $W$-extension or an almost $W$-extension starting with $qvw$ for some $q\in V(Q)$.
   }\label{fig:qtunnel}
\end{figure}
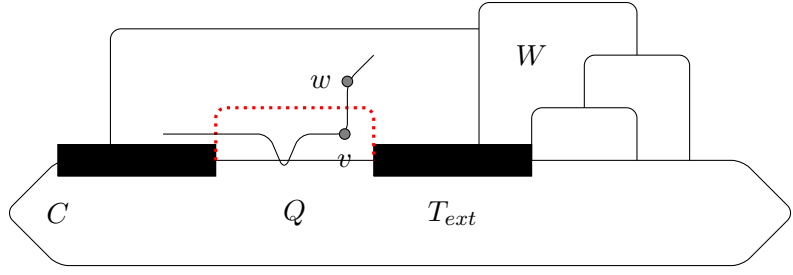

This argument leads to an observation that 
if $H$ contains some vertex $q_i$ with $6\le i\le m-5$, then the restriction of $H$ on $R$ should be a path from $Z_{q_1}$ to $Z_{q_m}$. Let $P$ be the restriction of $H$ on $R$. 
We additionally remove $\{q_j:1\le j\le 15, m-14\le j\le m\}$ and assume $H$ is not removed.
Then there are two ways that $P$ can be placed inside the $Q$-tunnel $R$: either
the endpoints of $P$ are in the same connected component of $R-(T_{ext}\cup V(Q))$ or not.
In the former case, we could reroute this path so that this part does not contain a vertex of $Q$.
So, we could obtain a $D$-avoiding tulip containing less vertices of $C$.
However, since $G-V(C)$ is chordal, 
there should be some subpath $Q'$ of $C-T_{ext}$ such that 
the restriction of $H$ on the $Q'$-tunnel is of the second type.
We will show that such a path can be hit by removing $5$ more vertices in $Q'$.
This will give a vertex set $T_{avoid:tulip}$ of size at most $35(s_{k+1}+42k+26)$  hitting all the remaining $D$-avoiding tulips.

\subsection{$D$-traversing tulips}
[Subsection~\ref{subsec:Dtulip}.]
This case can be handled similarly as the case of $D$-traversing sunflowers. 
It turns out that $T_{ext}$ hits every $D$-traversing tulip that contains precisely two vertices of $C$. 
Using a  matching argument between $D$ and the set of three consecutive vertices of $C$, we show that an additional set $T_{trav:tulip}$ of size at most $25k+9$ 
plus $T_{avoid:tulip}$ 
hits all the remaining $D$-traversing tulips unless $G$ contains $k+1$ vertex-disjoint holes.

In total, we can output in polynomial time either $k+1$ vertex-disjoint holes in $G$, or a vertex set of size at most 
\begin{align*}
&\abs{T_{ext}\cup T_{avoid:tulip}\cup T_{trav:tulip}} \\
&\le 41(s_{k+1}+42k+26)+ 35(s_{k+1}+42k+26)+25k+9 \\
										&\le 76(s_{k+1}+42k+26)+25k+9= 76s_{k+1}+3217k+1985
										\end{align*}
										 hitting all holes.

\section{Structural properties of $G$}\label{sec:lemmas}

In this section, we present structural properties of a graph $G$ with a shortest hole $C$.
In Subsection~\ref{subsec:distance}, we derive a relationship between the distance between $Z_v$ and $Z_w$ in $G_{deldom}$ for two vertices $v, w\in V(C)$ and the distance between $v$ and $w$ in $C$.
Briefly, we show that the distance between $Z_v$ and $Z_w$ in $G_{deldom}$ is at least some constant times the distance between $v$ and $w$ in $C$. 
We also prove that every connected subgraph in $G_{nbd}$ has a connected support.
In Subsection~\ref{subsec:dominating}, we obtain some basic properties of $C$-dominating vertices.
Recall that we assume that the length of $C$ exceeds $\mu_k$.

\subsection{Distance lemmas}\label{subsec:distance}

The following lemma classifies vertices in $N(C)$ with respect to the number of neighbors in $C$.

\begin{LEM}\label{lem:consecutive}
For every vertex $v$ of $N(C)$, either it has at most $3$ neighbors in $C$ and these vertices are consecutive in $C$, or it is $C$-dominating. 
\end{LEM}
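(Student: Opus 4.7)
The plan is to partition $C$ into arcs between consecutive neighbors of $v$ on $C$ and to apply the shortest-hole hypothesis to each such arc; this will force every arc to have length at most $1$ or at least $\abs{V(C)}-2$, from which the conclusion follows by a short counting argument. Set $S:=N(v)\cap V(C)$, order its elements cyclically along $C$ as $s_1,\ldots,s_t$, and let $P_i$ denote the arc of $C$ from $s_i$ to $s_{i+1}$ (indices mod $t$) whose interior is disjoint from $S$. By construction the $P_i$ partition the edges of $C$, so their lengths sum to $\abs{V(C)}$, and every interior vertex of every $P_i$ is a non-neighbor of $v$.

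The key step is to show that each $P_i$ of length at least $2$ must in fact be long. For such a $P_i$, form the cycle $C_i := s_iP_is_{i+1}\odot s_{i+1}vs_i$. The subpath $P_i$ is induced since $C$ is an induced cycle, and the interior of $P_i$ avoids $N(v)$ by construction, so no chord joins $v$ to an interior vertex of $P_i$; hence $C_i$ is a hole of length $\abs{P_i}+2\ge 4$. The minimality of $C$ now forces $\abs{P_i}+2\ge \abs{V(C)}$, that is, $\abs{P_i}\ge \abs{V(C)}-2$. Combined with the trivial alternative $\abs{P_i}\le 1$, every arc satisfies exactly one of these two inequalities.

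To finish, observe that $\abs{V(C)}>\mu_k\ge 5$, so $\abs{V(C)}-2>\abs{V(C)}/2$ and at most one arc can be long. If no arc is long, every $P_i$ has length exactly $1$, forcing $t=\abs{V(C)}$ and hence $v\in D$. Otherwise exactly one arc $P_{i^\ast}$ is long while the other $t-1$ arcs each have length exactly $1$; these $t-1$ arcs contribute $t-1$ to the total, so $P_{i^\ast}$ has length $\abs{V(C)}-(t-1)$, and the required bound $\abs{V(C)}-(t-1)\ge \abs{V(C)}-2$ forces $t\le 3$. The fact that the short arcs separating successive $s_j$'s all have length $1$ then witnesses that $s_1,\ldots,s_t$ are consecutive on $C$, as required. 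The only delicate point is verifying that each $C_i$ is chordless, but this is immediate from $C$ being induced together with the construction of $P_i$; no substantive obstacle is anticipated.
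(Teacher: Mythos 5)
Your proof is correct and uses essentially the same idea as the paper's: form a cycle from an arc of $C$ between two neighbors of $v$ together with the two edges to $v$, and invoke shortest-hole minimality. Your arc-partition-and-count organization is a slightly cleaner packaging than the paper's, which first rules out ``bad'' pairs of neighbors with a path of intermediate length between them and then separately handles the case of two neighbors at distance two, but the core argument is the same.
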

\begin{proof}
Let us  write $N_i:=\{v\in N(C):\abs{N(v)\cap V(C)}=i\}$ for $i\geq 1$.
We first show that $N(C)=N_1\uplus N_2\uplus N_3\uplus D$.
Let $v\in N(C)\setminus D$.

We claim that $v$ has no two neighbors $w_1$ and $w_2$ in $C$ 
such that 
\begin{itemize}
\item[($\ast$)] there is a $(w_1, w_2)$-subpath $Q$ of $C$ where $Q$ has length at least $2$ and at most $\abs{V(C)}-3$ and $v$ has no neighbor in the internal vertices of $Q$.
\end{itemize}
If there is such a path $Q$, then $w_1vw_2\odot w_2Qw_1$ is a hole of length at most $\abs{V(C)}-1<\abs{V(C)}$, which contradicts the assumption that $C$ is a shortest hole. So the claim holds.

This implies that $v$ has no neighbors $z_1$ and $z_2$ with $\dist_C(z_1, z_2)\ge 3$. Indeed, if such neighbors exist, then let $Q$ be a $(z_1,z_2)$-subpath of $C$ containing at least one internal vertex non-adjacent to $v$. Since $v\notin D$, such $Q$ exists. Note that the length of $Q$ is at least 3 and at most $\abs{V(C)}-3$. Then there exist two neighbors $w_1$ and $w_2$ of $v$ in $V(Q)$ satisfying $(\ast)$, a contradiction. 
Therefore, the neighbors of $v$ in $C$ are contained in three consecutive vertices of $C$. 
This implies that $v\in N_1\uplus N_2\uplus N_3$.

Furthermore, if $v\in N(C)\setminus D$ has exactly two neighbors with distance $2$ in $C$, then $G$ contains a hole of length $4$, a contradiction. 
Therefore, such a vertex has at most $3$ neighbors in $C$ that are consecutive in $C$, as required.
\end{proof}

The next lemma is illustrated in Figure~\ref{fig:distance}.
\begin{LEM}\label{lem:farnonadj}
Let $x$ and $y$ be two vertices in $C$ such that $\dist_C(x,y)\geq 4$. Then there is no edge between $Z_x$ and $Z_y$. 
\end{LEM}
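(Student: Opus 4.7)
The plan is to assume for contradiction that some $v\in Z_x$ is adjacent to some $w\in Z_y$, and to extract from this a hole strictly shorter than $C$, contradicting the minimality of $|V(C)|$. Since $Z_x=\{x\}\cup(N(x)\setminus V(C)\setminus D)$ and similarly for $Z_y$, I would split on whether $v=x$ or $v\in N(x)\setminus V(C)\setminus D$, and likewise for $w$. If exactly one of $v,w$ lies on $C$, say $v=x$ and $w\in N(y)\setminus V(C)\setminus D$, then $w$ is adjacent to both $x$ and $y$, placing $\{x,y\}\subseteq N(w)\cap V(C)$; since $w\notin D$, Lemma~\ref{lem:consecutive} forces $N(w)\cap V(C)$ to lie within three consecutive vertices of $C$, contradicting $\dist_C(x,y)\geq 4$.

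If $v=x$ and $w=y$, then $xy$ is an edge of $G$ and hence a chord of $C$. Taking $P$ to be the shorter arc of $C$ from $x$ to $y$, of length $d=\dist_C(x,y)\geq 4$, the cycle $xy\odot yPx$ has length $d+1\leq\abs{V(C)}/2+1<\abs{V(C)}$ and is easily seen to be induced, since $P$ is induced (being a subpath of $C$) and neither $x$ nor $y$ has neighbors in $V(C)$ outside its two $C$-adjacent vertices, so no chord from $x$ or $y$ into the interior of $P$ can appear. This produces a hole shorter than $C$.

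The main case is $v\in N(x)\setminus V(C)\setminus D$ and $w\in N(y)\setminus V(C)\setminus D$. Here Lemma~\ref{lem:consecutive} makes $A:=N(v)\cap V(C)$ and $B:=N(w)\cap V(C)$ each a set of at most three consecutive vertices of $C$, containing $x$ and $y$ respectively. Writing the shorter arc as $P=c_0c_1\cdots c_d$ with $c_0=x$, $c_d=y$, one has $A\cap V(P)\subseteq\{c_0,c_1\}$ and $B\cap V(P)\subseteq\{c_{d-1},c_d\}$. I would then take $a$ to be the rightmost vertex of $A\cap V(P)$ along $P$ and $b$ the leftmost vertex of $B\cap V(P)$, so that $P_0:=aPb$ is an induced subpath of $C$ of length at least $d-2\geq 2$. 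The cycle $C_1:=va\odot aP_0b\odot bw\odot wv$ has length between $5$ and $d+3<\abs{V(C)}$ (using $\abs{V(C)}\geq 2d$). The delicate step, where I expect to spend the most care, is showing $C_1$ is induced: the extremal choice of $a$ prevents any element of $A$ from sitting strictly past $a$ along $P$, so $v$ admits no chord into $V(P_0)\setminus\{a\}$, and a symmetric argument rules out chords from $w$; together with $P_0$ being induced, this makes $C_1$ a hole shorter than $C$, the desired contradiction.
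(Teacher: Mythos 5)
Your overall strategy (derive a hole strictly shorter than $C$ from a supposed edge of $Z_x$--$Z_y$) matches the paper's, and the cases in which one of $v,w$ lies on $C$ are fine. (In fact $v=x,\ w=y$ is vacuous: $C$ is a hole, so it has no chord, so $xy\notin E(G)$; and when exactly one of them is in $C$, your use of Lemma~\ref{lem:consecutive} is the same as the paper's.) The gap is in the main case.

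You claim $A\cap V(P)\subseteq\{c_0,c_1\}$ and $B\cap V(P)\subseteq\{c_{d-1},c_d\}$. Lemma~\ref{lem:consecutive} only guarantees that $A=N(v)\cap V(C)$ is a set of at most three \emph{consecutive} vertices containing $x$; it does not force $x$ to be an endpoint or the midpoint of that window. In particular $A=\{c_0,c_1,c_2\}$ is permitted, so $A\cap V(P)$ can be all of $\{c_0,c_1,c_2\}$, and symmetrically $B\cap V(P)$ can be $\{c_{d-2},c_{d-1},c_d\}$. Your lower bound $\abs{V(P_0)}\ge d-2$ therefore fails, and worse, when $d=4$ you can have $a=b=c_2$ (i.e.\ $v$ and $w$ share the midpoint of the short arc as a common $C$-neighbor). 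In that situation your $C_1$ degenerates to the triangle $vc_2wv$, which is not a hole, and you get no contradiction. This is exactly the sticky configuration the paper routes around by working inside the \emph{longer} arc $Q$: since $\dist_C(x,y)\ge 4$ and $\abs{V(Q)}$ is large, $N(v)\cap V(Q)\subseteq N^2_C(x)$ and $N(w)\cap V(Q)\subseteq N^2_C(y)$ are disjoint, so $v$ and $w$ have no common neighbor on $Q$, and a shortest $(v,w)$-path in $G[\{v,w\}\cup V(Q)]-vw$ together with the edge $vw$ yields a hole of length between $4$ and $\abs{V(C)}-2$. To repair your argument you would need to handle the $a=b$ case separately (e.g.\ switch to the long arc there), which essentially reproduces the paper's choice.
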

\begin{proof}
By Lemma~\ref{lem:consecutive}, the neighbors of any vertex in $N(C)\setminus D$ lie within distance at most $2$, and thus $x$ has no neighbors in $Z_y$ and $y$ has no neighbors in $Z_x$. 
Suppose $v\in Z_x\setminus \{x\}$ and $w\in Z_y\setminus \{y\}$ are adjacent. 
Let $P$ and $Q$ be $(x,y)$-subpaths of $C$ such that the length of $P$ is not greater than the length of $Q$. 
Since $C$ has length at least $9$, we may assume that $Q$ has length at least $5$.

Since the length of $Q$ is at least $5$, $N(v)\cap V(Q)$ is included in $N_C^2(x)\cap V(Q)$ and likewise we have 
$N(w)\cap V(Q)\subseteq N_C^2(y)\cap V(Q)$. Thus, $Q$ does not contain a common neighbor of $v$ and $w$. 

Since the length of $Q$ is at most $\abs{V(C)}-4$, a shortest $(v,w)$-path $Q'$ in $G[\{v,w\}\cup V(Q)]-vw$ has length at most $\abs{V(C)}-2$. 
Moreover, $Q'$ has length at least three due to the above assumption. 
Therefore, $vQ'w\odot wv$ is a hole strictly shorter than $C$, a contradiction.
\end{proof}

We prove a generalization of Lemma~\ref{lem:farnonadj}.
\begin{LEM}\label{lem:generalfarnonadj}
Let $m$ be a positive integer, and let $P$ be a $V(C)$-path in $G_{deldom}$ with endpoints $x$ and $y$.
If $P$ has length at most $m+2$, then $\dist_C(x,y)\leq  4m-1$.
\end{LEM}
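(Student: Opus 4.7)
The plan is to prove this by induction on $m$. For the base case $m=1$, I would verify directly that any $V(C)$-path of length at most 3 has endpoints at $C$-distance at most 3: length 1 must be a $C$-edge since $C$ is chordless, so the distance is 1; length 2 has a single internal vertex $z\in N(C)\setminus D$, whose $C$-neighbors lie within three consecutive vertices of $C$ by Lemma~\ref{lem:consecutive}, so the distance is at most 2; length 3 is exactly Lemma~\ref{lem:farnonadj} applied to the central edge of $P$, whose endpoints lie in $Z_x$ and $Z_y$ respectively.

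For the inductive step with $m\ge 2$, write $P=v_0v_1\cdots v_\ell$ with $v_0=x$, $v_\ell=y$, and $\ell\le m+2$. If $\ell\le m+1$, the inductive hypothesis immediately yields $\dist_C(x,y)\le 4(m-1)-1\le 4m-1$. Otherwise $\ell=m+2$, and I would split on whether some interior index $i$ with $2\le i\le m$ satisfies $v_i\in N(C)$.

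If such an $i$ exists, pick $c\in N(v_i)\cap V(C)$. When $c\notin\{x,y\}$, the paths $P_a:=xv_1\cdots v_ic$ and $P_b:=cv_iv_{i+1}\cdots v_\ell$ are $V(C)$-paths in $G_{deldom}$ of lengths $i+1\le m+1$ and $m+3-i\le m+1$. Applying the inductive hypothesis with parameters $i-1$ and $m+1-i$ (each at most $m-1$) gives $\dist_C(x,c)\le 4i-5$ and $\dist_C(c,y)\le 4m-4i+3$, summing via the triangle inequality to $4m-2\le 4m-1$. When all $C$-neighbors of $v_i$ lie in $\{x,y\}$, I would handle the remaining possibilities directly: if $v_i$ is adjacent to both $x$ and $y$, Lemma~\ref{lem:consecutive} forces $\dist_C(x,y)\le 2$; if $v_i$ is adjacent only to $x$, then $xv_iv_{i+1}\cdots v_\ell$ is a $V(C)$-path of length $m+3-i\le m+1$ in $G_{deldom}$ and the inductive hypothesis applies; the case of adjacency only to $y$ is symmetric.

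The remaining case---when all interior vertices $v_2,\ldots,v_m$ lie outside $N(C)$---is the hardest. Here I would argue by contradiction, supposing $\dist_C(x,y)\ge 4m$. Let $Q=q_0q_1\cdots q_d$ be a shorter $(x,y)$-subpath of $C$, where $d=\dist_C(x,y)\ge 4m\ge 8$, and set $w:=q_3$. By Lemma~\ref{lem:consecutive}, $v_1$'s $C$-neighbors lie within distance 2 of $x=q_0$ and $v_{m+1}$'s within distance 2 of $y=q_d$, so neither reaches $q_3$ given $d\ge 8$; the deep vertices $v_2,\ldots,v_m$ have no $C$-neighbor at all. Consequently $w$ has no neighbor in $V(P)\setminus\{x,y\}$, and Lemma~\ref{lem:twopaths} applied to the internally vertex-disjoint paths $P$ and $Q$ (the latter induced as a subpath of the hole $C$) produces a hole of $G$ contained in $G[V(P)\cup V(Q)]$ that passes through $w$; tracking the construction in that lemma's proof, its length is at most $|P|+|Q|=m+d+2$. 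Since $d\ge 4m$ forces $|V(C)|\ge 2d\ge 8m>2m+4$, we obtain $m+d+2<|V(C)|$, contradicting the minimality of $C$. I expect the main obstacle to be this neighbor-analysis at $w=q_3$: it relies on combining the structural bound from Lemma~\ref{lem:consecutive} with the lower bound $d\ge 8$, and it is the sole place in the argument where the degeneracy of Case 2 (all interior vertices deep) is leveraged together with the assumed distance bound.
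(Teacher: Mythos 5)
Your proof is correct and follows the same strategy as the paper: induction on $m$, base case via Lemma~\ref{lem:farnonadj}, and for the inductive step a case split on whether some deep interior vertex of $P$ lies in $N(C)$, with the first case resolved by the triangle inequality and the second by exhibiting a hole strictly shorter than $C$. The one meaningful divergence is in the last case, where you apply Lemma~\ref{lem:twopaths} to $P$ together with the shorter $(x,y)$-arc of $C$ (pivoting at $q_3$), whereas the paper links $p_2$ and $p_{n-1}$ through a shortest $C$-path between their $C$-neighborhoods and reads off the hole directly---both give the same length bound and contradiction, with your route having the minor advantage of not implicitly needing $P$ to be induced.
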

\begin{proof}
We prove by induction on $m$. Lemma~\ref{lem:farnonadj} settles the case when $m=1$.
Let us assume $m\ge 2$.
Let $P=p_1p_2 \cdots p_n$ be a $V(C)$-path of length at most $m+2$ from $p_1=x$ and $p_n=y$ such that 
all of $p_2, \ldots, p_{n-1}$ are contained in $V(G_{deldom})\setminus V(C)$, and suppose that  $\dist_C(x,y)\geq 4m$.
For $\dist_C(x,y)\ge 4m\ge 4$, Lemma~\ref{lem:farnonadj} implies that $p_2$ is not adjacent to $p_{n-1}$.
Therefore, $P$ contains at least $5$ vertices.
We distinguish cases depending on whether $\{p_3, \ldots, p_{n-2}\}$ contains a vertex in $N(C)$ or not.

\medskip
\noindent {\bf Case 1.} $\{p_3, \ldots, p_{n-2}\}$ contains a vertex in $N(C)$. \\
We choose an integer $i\in \{3, \ldots, n-2\}$ such that $p_i\in N(C)$, and
choose a neighbor $z$ of $p_i$ in $C$.
Since there is a $V(C)$-path from $p_1$ to $z$ of length $i$, by induction hypothesis, $\dist_C(p_1, z)< 4(i-2)$.
By the same reason, we have $\dist_C(z, p_n)<4(n-i+1-2)=4(n-i-1)$.
Therefore, we have 
\[\dist_C(p_1, p_n)\le \dist_C(p_1, z)+\dist_C(z, p_n)<4(n-3)\le 4m,\]
a contradiction.

\medskip
\noindent {\bf Case 2.} $\{p_3, \ldots, p_{n-2}\}$ contains no vertices in $N(C)$.
\\
Let $Q$ be a shortest path from $N(p_2)\cap V(C)$ to $N(p_{n-1})\cap V(C)$ in $C$, and let $q, q'$ be its endpoints.
Observe that $p_2Pp_{n-1}$ and $p_2q\odot qQq' \odot q'p_{n-1}$ are two paths from $p_2$ to $p_{n-1}$ where there are no edges between their internal vertices.
Therefore, $G[V(Q)\cup (V(P)\setminus \{p_1, p_n\})]$ is a hole.

Since $\dist_C(x,y)\le \frac{\abs{V(C)}}{2}$, we have $m\le  \frac{\abs{V(C)}}{8}$. Therefore, the hole
$G[V(Q)\cup (V(P)\setminus \{p_1, p_n\})]$ has length at most 
\[ \abs{V(Q)}+\abs{V(P)}\le \frac{\abs{V(C)}}{2}+  \frac{\abs{V(C)}}{8} +1 < \abs{V(C)}.\]
This contradicts the assumption that $C$ is a shortest hole of $G$.

\medskip
This concludes the proof.
\end{proof}

Next, we show that every connected subgraph in $G_{nbd}$ has a connected support.
The following observation is useful.

\begin{LEM}\label{lem:threeconsecutive}
Let $a,b$ be vertices of $C$ with $\dist_C(a,b)\in \{2,3\}$ and 
let $S$ be the set of internal vertices of the shortest $(a,b)$-path of $C$.
Then there is no edge between $Z_a\setminus Z_S$ and $Z_b\setminus Z_S$.
\end{LEM}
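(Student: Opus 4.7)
The plan is to argue by contradiction: assume there exist $u \in Z_a \setminus Z_S$ and $v \in Z_b \setminus Z_S$ with $uv \in E(G)$, and derive a hole strictly shorter than $C$, contradicting the minimality of $C$ (recall $\abs{V(C)} > \mu_k > 6$). Write $S = \{c\}$ when $\dist_C(a,b)=2$, and $S = \{c_1, c_2\}$ (with $ac_1c_2b$ the shortest $(a,b)$-subpath of $C$) when $\dist_C(a,b)=3$. By the definition of $Z_\bullet$, both $u$ and $v$ lie outside $D$, and by the definition of $Z_a \setminus Z_S$ (resp. $Z_b \setminus Z_S$) the vertex $u$ (resp. $v$) has no neighbor in $S$ unless $u = a$ (resp. $v = b$).

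First I dispose of the degenerate sub-cases. If $u=a$ and $v=b$, then $ab \in E(G)$, but $a, b$ are non-consecutive vertices of the induced cycle $C$, a contradiction. If $u = a$ and $v \neq b$, then $v \notin D$ is a common neighbor of $a$ and $b$ on $C$ but is not adjacent to any vertex in $S$; since $\dist_C(a,b) \in \{2,3\}$, Lemma~\ref{lem:consecutive} forces the neighbors of $v$ in $C$ to lie among at most three consecutive vertices of $C$, which is impossible when $\dist_C(a,b) = 3$ and forces $v$ to be adjacent to the middle vertex $c$ when $\dist_C(a,b) = 2$, contradicting $v \notin Z_c$. The sub-case $v = b$, $u \neq a$ is symmetric.

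For the generic sub-case $u \neq a$ and $v \neq b$, consider the closed walk
\[
\Gamma \;=\; a\,u\,v\,b \,\odot\, bP a,
\]
where $P$ is the $(a,b)$-subpath of $C$ through $S$. Its length is $3 + \abs{S} + 1 \in \{5,6\}$, so if $\Gamma$ is chord-less then it is a hole strictly shorter than $C$, contradicting minimality. I verify there are no chords among the vertices $\{a,b\} \cup S \cup \{u,v\}$: the pairs $ab$, $ac_2$, $c_1 b$ lie in the induced cycle $C$ at non-adjacent positions, hence are non-edges; the pairs $uc_i$ and $vc_i$ are non-edges because $u \in Z_a \setminus Z_S$ and $v \in Z_b \setminus Z_S$; finally the pairs $ub$ and $av$ are non-edges by the same argument as in the previous paragraph, since $u \notin D$ (resp.\ $v \notin D$) cannot have both $a$ and $b$ as neighbors in $C$ without also hitting $S$ (violating $u \in Z_a \setminus Z_S$, resp.\ $v \in Z_b \setminus Z_S$) or violating Lemma~\ref{lem:consecutive} outright.

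There is no substantive obstacle here: the whole argument is a clean consequence of the minimality of $C$ via Lemma~\ref{lem:consecutive}. The only mild care needed is the case split on whether the endpoints $u,v$ coincide with $a,b$, which is where Lemma~\ref{lem:consecutive} is invoked in its stronger form (bounding neighbors to three consecutive vertices of $C$).
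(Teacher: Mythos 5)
Your proof is correct and takes essentially the same approach as the paper's: both build the length-$5$ or length-$6$ cycle through $a$, $u$, $v$, $b$ and the internal vertices of the short $(a,b)$-arc of $C$, check chordlessness via Lemma~\ref{lem:consecutive} and the definition of $Z_S$, and contradict the minimality of $C$. The paper handles your degenerate sub-cases implicitly by first deducing that $u$ is not adjacent to $b$ and $v$ not to $a$ (hence $u\neq a$ and $v\neq b$ given the edge $uv$), but your explicit split amounts to the same thing.
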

\begin{proof}
Suppose there is an edge between $x\in Z_a\setminus Z_S$ and $y\in Z_b\setminus Z_S$.
If $x$ is adjacent to $b$, then $x\neq a$, and by Lemma~\ref{lem:consecutive}, $x$ has a neighbor in $S$, contradicting the assumption that $x\notin Z_S$.
Therefore, $x$ is not adjacent to $b$.
For the same reason, $y$ is not adjacent to $a$.
Therefore, the distance between $N(x)\cap V(C)$ and $N(y)\cap V(C)$ in $C$ is $2$ or $3$, 
and the vertex set of the shortest path from $N(x)\cap V(C)$ to $N(y)\cap V(C)$ in $C$ with $\{x,y\}$ induces a hole of length $5$ or $6$. 
This contradicts the assumption that $C$ is a shortest hole in $G$ and it has length greater than $6$.
\end{proof}

\begin{LEM}\label{lem:connectedsupport}
Let $H$ be a connected subgraph in $G_{nbd}$.
Then $C[\spp(H)]$ is connected.
\end{LEM}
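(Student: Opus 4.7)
The plan is to introduce a per-vertex notion of support. For each $z\in V(G_{nbd})$ set $\operatorname{supp}(z):=\{v\in V(C):z\in Z_v\}$, so that $\spp(H)=\bigcup_{z\in V(H)}\operatorname{supp}(z)$. If $z\in V(C)$ then the definition of $Z_v$ forces $\operatorname{supp}(z)=\{z\}$; otherwise $z\in N(C)\setminus V(C)\setminus D$ and $\operatorname{supp}(z)=N(z)\cap V(C)$, which by Lemma~\ref{lem:consecutive} is a set of at most three consecutive vertices of $C$. In either case $\operatorname{supp}(z)$ is a nonempty connected arc of $C$.

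The crux will be the following claim: for every edge $z_1z_2$ of $G_{nbd}$, the set $\operatorname{supp}(z_1)\cup\operatorname{supp}(z_2)$ is connected in $C$. If $z_1,z_2\in V(C)$ then, since $C$ is induced, they are $C$-consecutive and the union is the $2$-vertex arc $\{z_1,z_2\}$. If exactly one of them, say $z_1$, lies on $C$, then $z_1\in N(z_2)\cap V(C)=\operatorname{supp}(z_2)$, so the union equals $\operatorname{supp}(z_2)$. If neither lies on $C$, I would pick endpoints $a$ of $\operatorname{supp}(z_1)$ and $b$ of $\operatorname{supp}(z_2)$ minimising $\dist_C(a,b)$. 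Lemma~\ref{lem:farnonadj} gives $\dist_C(a,b)\le 3$. Were $\dist_C(a,b)\in\{2,3\}$, letting $S$ denote the internal vertices of the shortest $(a,b)$-subpath of $C$, the minimality of $(a,b)$ forces $S\cap(\operatorname{supp}(z_1)\cup\operatorname{supp}(z_2))=\emptyset$, so $z_1\in Z_a\setminus Z_S$ and $z_2\in Z_b\setminus Z_S$, contradicting Lemma~\ref{lem:threeconsecutive}. Thus $\dist_C(a,b)\le 1$, and the two short arcs overlap or abut, so their union is an arc.

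To finish, take any $u,v\in\spp(H)$ and pick $z_u,z_v\in V(H)$ with $u\in\operatorname{supp}(z_u)$, $v\in\operatorname{supp}(z_v)$. Connectedness of $H$ yields a path $z_u=z_0,z_1,\ldots,z_\ell=z_v$ in $H$, and I will show by induction on $j$ that $B_j:=\bigcup_{i\le j}\operatorname{supp}(z_i)$ is connected in $C$. The base $j=0$ is the single arc $\operatorname{supp}(z_0)$. For the step, by the claim $\operatorname{supp}(z_j)\cup\operatorname{supp}(z_{j+1})$ is an arc; if $\operatorname{supp}(z_{j+1})\cap B_j\neq\emptyset$ we are done, otherwise $\operatorname{supp}(z_{j+1})$ is disjoint from $B_j$ yet adjacent to $\operatorname{supp}(z_j)\subseteq B_j$ along a single $C$-edge $pq$ with $p\in\operatorname{supp}(z_j)\subseteq B_j$ and $q\in\operatorname{supp}(z_{j+1})\setminus B_j$; then $p$ must be an endpoint of the arc $B_j$ (otherwise both $C$-neighbours of $p$ would lie in $B_j$, forcing $q\in B_j$), so $B_{j+1}$ extends $B_j$ along $C$ through the edge $pq$ and remains connected. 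Taking $j=\ell$ places $u$ and $v$ in a common component of $C[B_\ell]\subseteq C[\spp(H)]$.

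The main obstacle I anticipate is the cycle-geometry bookkeeping in the inductive step---verifying that each new short arc attaches at a boundary of $B_j$ rather than creating a gap---and the careful case analysis in the key claim when both endpoints of the edge lie off $C$. The hypothesis $|V(C)|>\mu_k$ serves as a safety net: the cumulative support along the finite path in $H$ is short compared to $|V(C)|$, and Lemma~\ref{lem:farnonadj} keeps successive supports $C$-close, so the ``short'' direction of travel on the cycle is always unambiguous and no wrap-around pathology occurs.
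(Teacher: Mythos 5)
Your proof is correct and follows essentially the same route as the paper: both hinge on looking at a single edge $z_1z_2$ with both endpoints off $C$, taking $a\in\operatorname{supp}(z_1)$, $b\in\operatorname{supp}(z_2)$ at minimum $C$-distance, and combining Lemmas~\ref{lem:consecutive}, \ref{lem:farnonadj}, and \ref{lem:threeconsecutive} to force $\dist_C(a,b)\le 1$; the paper packages this as a contradiction against a hypothetical edge whose endpoint-supports lie in different components of $C[\spp(H)]$, while you package it as a per-edge claim and then propagate connectivity by induction along a path in $H$, which is the direct/contrapositive rephrasing of the same argument. One small caveat: your closing remark that $|V(C)|>\mu_k$ prevents ``wrap-around'' and keeps the cumulative support short is neither needed nor in general true --- $\spp(H)$ can equal all of $V(C)$ (a full sunflower), and your induction already handles that case fine, since $B_j$ may simply become the whole cycle; the lemma only requires $C$ to be long enough for Lemmas~\ref{lem:consecutive}--\ref{lem:threeconsecutive} to apply.
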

\begin{proof}
Suppose $\spp(H)$ is not connected.
Then $H$ contains an edge $xy$ such that $\spp(G[\{x\}])$ and $\spp(G[\{y\}])$ are contained in distinct components of $C[\spp(H)]$.
Notice that $\spp(G[\{x\}])=N(x)\cap V(C)$ and $\spp(G[\{y\}])=N(y)\cap V(C)$, and it follows that $x,y\notin V(C)$ by Lemma~\ref{lem:consecutive}. 
We choose $a\in \spp(G[\{x\}])$ and $b\in \spp(G[\{y\}])$ with minimum $\dist_C(a,b)$.
By Lemma~\ref{lem:farnonadj}, we have $\dist_C(a,b)\le 3$, and since 
$\spp(G[\{x\}])$ and $\spp(G[\{y\}])$ are contained in distinct components of $C[\spp(H)]$, 
we have $\dist_C(a,b)\ge 2$.
Let $S$ be the set of internal vertices of the shortest $(a,b)$-path in $C$.
By the choice, $x\in Z_a\setminus Z_S$ and $y\in Z_b\setminus Z_S$.
Then by Lemma~\ref{lem:threeconsecutive}, 
there is no edge between $Z_a\setminus Z_S$ and $Z_b\setminus Z_S$.
This contradicts the assumption that $x$ is adjacent to $y$.
\end{proof}

The following lemma provides a structure of a $(Z_x, Z_y)$-path in $G_{nbd}$ for two vertices $x,y\in V(C)$.

\begin{LEM}\label{lem:pathsupport}
Let $x,y$ be two distinct vertices in $C$ and $P_1$, $P_2$ be two $(x,y)$-paths in $C$.
Let $Q$ be a $(Z_x, Z_y)$-path in $G_{nbd}$ such that $\spp(Q)\neq V(C)$.
Then either $Q$ is contained in $Z_{V(P_1)}$ or $Z_{V(P_2)}$.
\end{LEM}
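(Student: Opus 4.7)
The plan is to exploit the fact that $Q$ is a $(Z_x, Z_y)$-path—so its internal vertices avoid $Z_x \cup Z_y$—and deduce that all internal vertices lie on a single ``side'' relative to $P_1$ and $P_2$. Call a vertex $u \in V(G_{nbd})$ \emph{ambiguous} if $u \in Z_{V(P_1)} \cap Z_{V(P_2)}$, and \emph{strict $P_i$-side} if $u \in Z_{V(P_i)} \setminus Z_{V(P_{3-i})}$. The first key observation is that every ambiguous vertex lies in $Z_x \cup Z_y$: a $C$-vertex in $V(P_1) \cap V(P_2)$ equals $x$ or $y$, and for a non-$C$ ambiguous vertex $u$, Lemma~\ref{lem:consecutive} (combined with the shortest-hole assumption that rules out $4$-holes) makes $N(u) \cap V(C)$ a gap-free consecutive interval of length at most $3$; to straddle both $V(P_1)$ and $V(P_2)$, this interval must contain the boundary $x$ or $y$, whence $u \in Z_x \cup Z_y$. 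Since $Q$ is a $(Z_x, Z_y)$-path, every internal vertex of $Q$ avoids $Z_x \cup Z_y$, hence is non-ambiguous, i.e., strict $P_1$-side or strict $P_2$-side.

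The main step is to show that two consecutive internal vertices $q_i, q_{i+1}$ cannot occupy opposite strict sides; suppose for contradiction that $q_i$ is strict $P_1$-side and $q_{i+1}$ is strict $P_2$-side. If either is a $C$-vertex—say $q_i \in V(P_1) \setminus \{x, y\}$—then $q_{i+1}$ being a $C$-vertex in $V(P_2) \setminus \{x, y\}$ would make $q_i q_{i+1}$ a chord of $C$, impossible, and $q_{i+1}$ being non-$C$ strict $P_2$ forces its $V(C)$-neighbor $q_i$ to lie in $V(P_2) \setminus \{x, y\}$, contradicting $q_i \in V(P_1) \setminus \{x, y\}$; symmetric cases are identical. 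So assume both are non-$C$. Pick $a \in N(q_i) \cap V(C) \subseteq V(P_1) \setminus \{x, y\}$ and $b \in N(q_{i+1}) \cap V(C) \subseteq V(P_2) \setminus \{x, y\}$, so that $\dist_C(a, b) \ge 2$ (interior arcs are disjoint), while Lemma~\ref{lem:farnonadj} yields $\dist_C(a, b) \le 3$. For $\dist_C(a, b) = 2$, the unique intermediate vertex $s$ of the shortest $(a, b)$-path in $C$ must be $x$ or $y$; neither $q_i$ nor $q_{i+1}$ is adjacent to $s$ (their $V(C)$-neighbors exclude $\{x, y\}$), so $q_i \in Z_a \setminus Z_s$ and $q_{i+1} \in Z_b \setminus Z_s$ and Lemma~\ref{lem:threeconsecutive} gives a contradiction. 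For $\dist_C(a, b) = 3$, the shortest $(a, b)$-arc of $C$ crosses $x$ or $y$; concatenating the edge $q_i q_{i+1}$ with this arc forms a $6$-cycle whose non-adjacent pairs admit only one potential chord (between $q_{i+1}$ and the interior vertex of the $C$-arc lying in $V(P_2) \setminus \{x, y\}$). If the chord is absent, the $6$-cycle is a hole; if present, it shortens to a $5$-cycle that is still a hole—either way, this contradicts the minimality of $\abs{V(C)} > \mu_k$.

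It then follows that all internal vertices of $Q$ share one strict side, say $P_1$. Since $q_0 \in Z_x \subseteq Z_{V(P_1)} \cap Z_{V(P_2)}$ and $q_n \in Z_y \subseteq Z_{V(P_1)} \cap Z_{V(P_2)}$, the endpoints are ambiguous and hence in $Z_{V(P_1)}$, giving $V(Q) \subseteq Z_{V(P_1)}$. The main obstacle I anticipate is the $\dist_C(a, b) = 3$ sub-case: certifying that the constructed $6$-cycle (or its $5$-cycle shortcut) is chordless requires enumerating all non-adjacent vertex pairs and ruling out chords by invoking the strict-side constraints on $N(q_i) \cap V(C)$ and $N(q_{i+1}) \cap V(C)$ together with the chordlessness of $C$ itself.
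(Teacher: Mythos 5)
Your proof is correct, and it takes a genuinely different route from the paper's. The paper proves this lemma by invoking Lemma~\ref{lem:connectedsupport}: it observes that $\spp(Q)$ is connected and contains $\{x,y\}$, hence contains $V(P_1)$ (say), and then derives that any vertex of $Q$ outside $Z_{V(P_1)}$ would force $\spp(Q) = V(C)$ via the support-connectivity of the two subpaths of $Q$ on either side of that vertex---contradicting the hypothesis $\spp(Q)\neq V(C)$. You instead do a local, edge-by-edge analysis: you classify each vertex of $G_{nbd}$ as ambiguous or strictly one-sided, show ambiguous vertices live in $Z_x\cup Z_y$ (so internal vertices of $Q$ are never ambiguous), and then rule out a ``side-crossing'' edge of $Q$ by applying Lemma~\ref{lem:farnonadj}, Lemma~\ref{lem:threeconsecutive}, and an explicit small-hole construction for the $\dist_C(a,b)=3$ subcase. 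One notable consequence of your approach is that it never invokes the hypothesis $\spp(Q)\neq V(C)$, so you actually establish a slightly stronger statement than Lemma~\ref{lem:pathsupport}. The paper's argument is shorter and leans on the abstract machinery of Lemma~\ref{lem:connectedsupport}; yours is more elementary and self-contained but requires the delicate chord-enumeration you flag at the end (which, upon checking, is fine: the only candidate chord is between $q_{i+1}$ and the interior arc vertex in $V(P_2)\setminus\{x,y\}$, and both the $6$-cycle and the shortcut $5$-cycle are holes shorter than $C$).
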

\begin{proof}
By Lemma~\ref{lem:connectedsupport}, 
$\spp(Q)$ is connected, and since $\spp(Q)\neq V(C)$, $\spp(Q)$ contains either $V(P_1)$ or $V(P_2)$.
Without loss of generality, we assume that $\spp(Q)$ contains $V(P_1)$.
By the definition of a $(Z_x, Z_y)$-path, $Q$ contains no vertex of $Z_{\{x,y\}}$ as an internal vertex.
Let $s$ and $t$ be the two endpoints of $Q$ contained in $Z_x$ and $Z_y$, respectively.

We claim that $Q$ contains no vertex of $V(G_{nbd})\setminus Z_{V(P_1)}$, which immediately implies the statement.
Suppose for contradiction that $Q$ contains a vertex $v\in V(G_{nbd})\setminus Z_{V(P_1)}$. Clearly, we have $v\neq s$ and $v\neq t$.
Let $u$ be a vertex in $C$ such that $v\in Z_u$. Observe that $u\neq x$ and $u\neq y$, as
$Q$ contains no vertex of $Z_{\{x,y\}}$ as an internal vertex.
Let $Q_s$ and $Q_t$ be the $(s,v)$- and $(t,v)$-subpath of $Q$, respectively.

By Lemma~\ref{lem:connectedsupport}, $\spp(Q_s)$ contains an $(x,u)$-subpath of $C$.
Assume $\spp(Q_s)$ contains the $(x,u)$-subpath of $C$ containing $y$.
This means that $Q_s$ contains a vertex of $Z_y$, other than $t$, contradicting the fact that $Q$ contains no vertex of $Z_{\{x,y\}}$ as an internal vertex.
Therefore, 
$\spp(Q_s)$ contains the vertex set of the $(x,u)$-subpath of $C$ avoiding $y$. 
Similarly, $\spp(Q_t)$ contains the vertex set of the $(y,u)$-subpath of $C$ avoiding $x$. 
Now, observe that $\spp(Q)=\spp(Q_s)\cup \spp(Q_t) \supseteq V(P_2)$ and also by assumption, we have $V(P_1)\subseteq \spp(Q)$. 
Consequently, we have $\spp(Q)=V(C)$, a contradiction. This completes the proof.
\end{proof}

The following lemma is useful to find a hole with a small support.
\begin{LEM}\label{lem:overlay}
Let $P$ and $Q$ be two vertex-disjoint induced paths of $G_{nbd}$ such that
\begin{itemize}
\item there are no edges between $V(P)$ and $V(Q)$, and 
\item $\spp(P)\neq V(C)$ and $\spp(Q)\neq V(C)$.
\end{itemize}
If $\abs{\spp(P)\cap \spp(Q)}\geq 3$ and $x,y,z\in \spp(P)\cap \spp(Q)$ are three consecutive vertices on $C$, then $Z_{\{x,y,z\}}$ contains a hole. 
\end{LEM}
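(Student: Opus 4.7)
The plan is to build a cycle entirely inside $Z_{\{x,y,z\}}$ by piecing together a short subpath of $P$ that threads through $Z_y$ with the $C$-path $xyz$, and then extract a hole from that cycle via Lemma~\ref{lem:twopaths}.

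First I would establish a useful reduction: the hypothesis that there are no edges between $V(P)$ and $V(Q)$ forces $\{x,y,z\}\cap(V(P)\cup V(Q))=\emptyset$. Indeed, if $x\in V(P)$, then from $x\in\spp(Q)$ there is a witness $q_x\in V(Q)\cap Z_x$ distinct from $x$ by vertex-disjointness, so $q_x\in Z_x\setminus\{x\}$ is adjacent to $x$, producing the forbidden edge $xq_x$; symmetric arguments handle $y,z$ and $Q$. Consequently, for each $v\in\{x,y,z\}$ there are non-$C$ non-$D$ witnesses $p_v\in V(P)\cap Z_v$ and $q_v\in V(Q)\cap Z_v$, and no $p_\ast$ is adjacent to any $q_\ast$.

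I would then produce a $(Z_x,Z_z)$-subpath $P^*=p_x^*Pp_z^*$ of $P$ contained in $Z_{\{x,y,z\}}$. Pick a witness $p_y\in V(P)\cap Z_y$ and choose $p_x^*,p_z^*$ to be the closest $Z_x$- and $Z_z$-vertices to $p_y$ along $P$, on opposite sides of $p_y$ when possible, so that $p_y$ is internal to $P^*$ and no internal vertex of $P^*$ lies in $Z_x\cup Z_z$. Since $\spp(P^*)\subseteq\spp(P)\neq V(C)$, Lemma~\ref{lem:pathsupport} applied to the pair $(x,z)$ forces $V(P^*)\subseteq Z_{\{x,y,z\}}$ or $V(P^*)\subseteq Z_{V(C)\setminus\{y\}}$. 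When $p_y$ is chosen with $N(p_y)\cap V(C)=\{y\}$ (a \emph{pure} $Z_y$-witness), we have $p_y\notin Z_{V(C)\setminus\{y\}}$, ruling out the wrap-around alternative, so $V(P^*)\subseteq Z_{\{x,y,z\}}$. When no pure $Z_y$-witness is available in $V(P)$, I would repeat the analysis with $Q$ in place of $P$; in the remaining degenerate case, every $Z_y$-witness in $V(P)\cup V(Q)$ also lies in $Z_x\cup Z_z$, which means some non-$C$ non-$D$ vertex is adjacent to at least two of $\{x,y,z\}$, and a short hole through $\{x,y,z\}$ can then be exhibited directly.

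Finally, prepending $xp_x^*$ and appending $p_z^*z$ produces an $(x,z)$-path $\widetilde P$ whose vertex set lies in $Z_{\{x,y,z\}}$. Together with the $C$-path $xyz$ this forms a cycle in $Z_{\{x,y,z\}}$, and I would apply Lemma~\ref{lem:twopaths} to $\widetilde P$ and $xyz$ to extract a hole: $xyz$ is induced since $xz\notin E(G)$ (as $C$ is induced and $x,z$ are at distance $2$ in $C$), and choosing $P^*$ of minimum length ensures the internal vertex $y$ of $xyz$ has no neighbor among the internal vertices of $\widetilde P$---any such neighbor would lie in $Z_y$ and let one shorten $P^*$, contradicting minimality. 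The main obstacle will be the subpath construction in the third paragraph, namely certifying in every case that some choice of $P^*$ (or its $Q$-analogue) is confined to $Z_{\{x,y,z\}}$ rather than the wrap-around alternative $Z_{V(C)\setminus\{y\}}$; the hypothesis that $P$ and $Q$ both meet $Z_y$ but have no edges between them is what makes the degenerate case tractable.
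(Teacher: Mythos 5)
Your opening reduction ($\{x,y,z\}\cap(V(P)\cup V(Q))=\emptyset$) and your use of Lemma~\ref{lem:pathsupport} to trap a $(Z_x,Z_z)$-subpath of $P$ inside $Z_{\{x,y,z\}}$ are sound and in the spirit of the paper. But the final step breaks down, and this is not a fixable detail: the way you invoke Lemma~\ref{lem:twopaths} cannot work.

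You pair $\widetilde P$ with the $C$-segment $xyz$ and take $w=y$ as the internal vertex required by Lemma~\ref{lem:twopaths}. For that, $y$ must have no neighbor among the internal vertices of $\widetilde P$, i.e.\ among $V(P^*)$. But every internal vertex of $P^*$ lies in $Z_{\{x,y,z\}}$ and, being on a $(Z_x,Z_z)$-path, outside $Z_x\cup Z_z$; hence it lies in $Z_y\setminus\{y\}$ and is adjacent to $y$ by definition of $Z_y$. In particular the witness $p_y$ that you deliberately force to be internal is itself adjacent to $y$. The assertion that ``choosing $P^*$ of minimum length'' removes such neighbors is incorrect: membership in $Z_y$ offers no shortcut for a $(Z_x,Z_z)$-path, and the fan $G[V(\widetilde P)\cup\{y\}]$ can well be chordal (e.g.\ when a single vertex $v\in Z_x\cap Z_y\cap Z_z$ is all of $P^*$). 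The hole guaranteed by the lemma lives elsewhere, and you will not find it without $Q$.

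This is precisely why the statement carries the hypothesis that $P$ and $Q$ have no edges between them. The paper takes shortest $(Z_x,Z_z)$-subpaths $P'$ of $P$ and $Q'$ of $Q$, shows both live in $Z_{\{x,y,z\}}$ via Lemma~\ref{lem:pathsupport}, and applies Lemma~\ref{lem:twopaths} to the two $(x,z)$-paths $xP'z$ and $xQ'z$. Here the role of ``no edges between $V(P)$ and $V(Q)$'' becomes essential: any internal vertex of $xQ'z$ lies in $V(Q')$ and has no neighbor in $V(P')=V(xP'z)\setminus\{x,z\}$, so the hypothesis of Lemma~\ref{lem:twopaths} is met for free. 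Replace your second path $xyz$ by such a $Q'$ and the argument closes; keeping $xyz$ cannot be made to work.
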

\begin{proof}
Since $x,y,z\in \spp(P)\cap \spp(Q)$ and there is no edge between $V(P)$ and $V(Q)$, $P$ and $Q$ contains no vertex of $\{x,y,z\}$.
Let $P'$ be a shortest $(Z_x,Z_z)$-subpath of $P$, and 
let $Q'$ be a shortest $(Z_x,Z_z)$-subpath of $Q$.
As $\spp(P)\neq V(C)$ and $\spp(Q)\neq V(C)$, 
$V(P')$ and $V(Q')$ are contained in $Z_{\{x,y,z\}}$ by Lemma~\ref{lem:pathsupport}.
By the preconditions, $P'$ and $Q'$ are vertex-disjoint and there are no edges between $P'$ and $Q'$. 
Thus, by Lemma~\ref{lem:twopaths}, $G[V(P')\cup V(Q')\cup \{x,z\}]$ contains a hole, which is in $Z_{\{x,y,z\}}$.
\end{proof}

\subsection{$C$-dominating vertices}\label{subsec:dominating}

We recall that $D$ is the set of $C$-dominating vertices.
We observe that $D$ is a clique because $G$ does not contain a hole of length 4.

\begin{LEM}\label{lem:dominating}
The set $D$ is  a clique. Furthermore, every hole contains at most one vertex of $D$.
\end{LEM}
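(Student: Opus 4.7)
The plan is to handle the two statements separately; both reduce to short contradictions that use exactly the minimality and chordality assumptions built into Theorem~\ref{thm:core}.

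For the first statement, I would argue directly that no two $C$-dominating vertices can be non-adjacent. Suppose $d_1, d_2 \in D$ are non-adjacent. Since $\abs{V(C)} > \mu_k \ge 5$, I can pick two non-consecutive vertices $v, w \in V(C)$; as $C$ is induced, $vw \notin E(G)$. Because both $d_1$ and $d_2$ are $C$-dominating, all four edges $d_1 v, v d_2, d_2 w, w d_1$ are present in $G$, while neither $d_1 d_2$ nor $vw$ is. Hence $d_1 v d_2 w d_1$ is an induced $4$-cycle, i.e.\ a hole of length $4$, contradicting that $C$ is a shortest hole of length strictly greater than $\mu_k$.

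For the second statement, I would proceed by contradiction: assume a hole $H$ contains two vertices $d_1, d_2 \in D$. By the first part, $d_1 d_2 \in E(G)$, so since $H$ is induced this edge lies on $H$ and $d_1, d_2$ are consecutive on $H$. The crux is then to show $V(H) \cap V(C) = \emptyset$. If there were $v \in V(H) \cap V(C)$, then $v d_1, v d_2 \in E(G)$ by $C$-domination; for neither to be a chord of $H$ both must lie on $H$, forcing the two $H$-neighbors of $v$ to be exactly $d_1$ and $d_2$. Combined with the edge $d_1 d_2$ of $H$, this means $H$ is the triangle on $\{d_1, d_2, v\}$, contradicting $\abs{V(H)} \ge 4$.

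With $V(H) \cap V(C) = \emptyset$ established, the hole $H$ lies entirely within $G - V(C)$. But by the hypothesis of Theorem~\ref{thm:core}, $G - V(C)$ is chordal and therefore contains no hole at all, giving the final contradiction. Both arguments are short and direct; I do not foresee any significant obstacle, as the input hypotheses — minimality of $\abs{V(C)}$ (for the first part) and chordality of $G - V(C)$ (for the second) — carry the weight of the proof.
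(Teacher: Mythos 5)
Your proof is correct and follows essentially the same approach as the paper: the first part constructs a $4$-hole from two non-adjacent $D$-vertices and two non-adjacent $C$-vertices (the paper states this more tersely as ``$G$ contains no hole of length $4$''), and the second part reaches the same triangle contradiction, merely ordered differently — you show $V(H)\cap V(C)=\emptyset$ and then invoke chordality of $G-V(C)$, whereas the paper takes $x\in V(H)\cap V(C)$ (guaranteed by that chordality) and derives the triangle $\{x,u,v\}$ directly.
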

\begin{proof}
Note that $G$ contains no hole of length $4$. This implies that any two vertices of $D$ are adjacent, which proves the first statement. To see the second statement, suppose that  $H$ is a hole containing two distinct vertices $u,v$ of $D$ and let $x\in V(H)\cap V(C)$ (there are no holes in $G-V(C)$). Then $\{x,u,v\}$ forms a triangle, contradicting the assumption that $H$ is a hole. 
\end{proof}

\begin{LEM}\label{lem:neighborofdominating}
If $H$ is a $D$-traversing hole, then it contains at most two vertices of $C$. Furthermore, every vertex of $V(H)\cap V(C)$ is adjacent to the unique $C$-dominating vertex on $H$. 
\end{LEM}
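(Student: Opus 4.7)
The plan is very short: this should follow almost immediately from Lemma~\ref{lem:dominating} together with the fact that a hole is induced and that each vertex of a cycle has exactly two neighbors on the cycle.

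First, I would fix a $D$-traversing hole $H$ and invoke Lemma~\ref{lem:dominating} to conclude that $H$ contains exactly one vertex of $D$; call it $d$. Next, for any $v\in V(H)\cap V(C)$, I would argue that $v$ must be adjacent to $d$ in $H$: since $d\in D$ is $C$-dominating, $vd\in E(G)$, and because $H$ is an induced subgraph of $G$ with $v,d\in V(H)$, the edge $vd$ belongs to $E(H)$. Hence every vertex of $V(H)\cap V(C)$ lies in $N_H(d)$, which proves the ``furthermore'' statement.

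Finally, since $H$ is a cycle, the vertex $d$ has exactly two neighbors in $H$, so $\lvert N_H(d)\rvert = 2$. Combined with the inclusion $V(H)\cap V(C)\subseteq N_H(d)$ established above, this yields $\lvert V(H)\cap V(C)\rvert\le 2$, which is the first statement.

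There is no real obstacle here: the only subtlety is remembering that ``hole'' means \emph{induced} cycle, so that an edge of $G$ between two vertices of $V(H)$ is automatically an edge of $H$. Everything else is bookkeeping against Lemma~\ref{lem:dominating}.
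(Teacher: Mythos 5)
Your proof is correct and follows essentially the same route as the paper: invoke Lemma~\ref{lem:dominating} to get a unique $C$-dominating vertex $d$ on $H$, observe that every $C$-vertex of $H$ must be adjacent to $d$ because $H$ is induced, and then conclude from the degree bound $\lvert N_H(d)\rvert = 2$ that $H$ has at most two vertices of $C$.
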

\begin{proof}
By Lemma~\ref{lem:dominating}, $H$ contains exactly one vertex of $D$, say $v$. 
If there is a vertex $x\in V(C)\cap V(H)$, then $x$ is adjacent to $v$ as $v$ is $C$-dominating. Therefore, any vertex of $V(C)\cap V(H)$ is adjacent to $v$ on $H$. Since $H$ is a cycle, $H$ contains at most two vertices of $C$.
\end{proof}

\section{Hitting all sunflowers}\label{sec:hittingsunflower}

In this section, we obtain a hitting set for sunflowers, unless $G$ contains $k+1$ vertex-disjoint holes.
Like in the previous section, we assume that $(G,k,C)$ is given as an input such that $C$ is a shortest hole of $G$ of length strictly greater than $\mu_k$, 
and $G-V(C)$ is chordal.

\subsection{Hitting all petals.} \label{subsec:petal}
We recall that a $D$-avoiding sunflower $H$ is a petal if $\abs{\spp(H)}\leq 7$.
By Lemma~\ref{lem:connectedsupport}, the vertices in the support of a petal is consecutive in $C$.

\begin{LEM}\label{lem:petalcover}
There is a polynomial-time algorithm which finds either $k+1$ vertex-disjoint holes in $G$ or a vertex set $T_{petal}\subseteq V(C)$ of at most $19k$ vertices such that 
\begin{itemize}
\item for every petal $H$, we have $\spp(H)\subseteq T_{petal}$.
\end{itemize}  
\end{LEM}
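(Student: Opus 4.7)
The plan is the standard greedy-plus-neighborhood template: iteratively pull out a petal whose support on $C$ is disjoint from an accumulated set $T_1\subseteq V(C)$, and then argue that a short $C$-neighborhood of $T_1$ captures the support of every petal.

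To implement the extraction step, I would initialize $T_1:=\emptyset$ and, at each iteration, enumerate all windows $W$ of seven consecutive vertices of $C$ that are disjoint from $T_1$. For each such $W$, build the induced subgraph $G[Z_W]$: its vertex set is contained in $N[C]\setminus D$ by definition of the sets $Z_v$, so any hole in $G[Z_W]$ is automatically a $D$-avoiding sunflower whose support is contained in $W$, i.e.\ a petal. A hole in $G[Z_W]$, if one exists, is detected in polynomial time by Lemma~\ref{lem:detectinghole}. When a hole $H$ is returned, add $\spp(H)$ to $T_1$ and record $H$; when no window yields a hole, the loop halts. The halting condition guarantees that every petal $H$ of $G$ satisfies $\spp(H)\cap T_1\ne\emptyset$: by Lemma~\ref{lem:connectedsupport} the set $\spp(H)$ is a connected subpath of $C$ of at most seven vertices, hence fits inside some window, and if that window were disjoint from $T_1$ the subroutine would have produced $H$.

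Let $H_1,\dots,H_t$ be the extracted petals. If $t\le k$, define $T_{petal}:=N^6_C[T_1]$. Since $T_1$ is a disjoint union of at most $k$ subpaths of $C$, each with at most seven vertices, every component of $T_1$ contributes at most $7+2\cdot 6=19$ vertices to $T_{petal}$, so $\abs{T_{petal}}\le 19k$. For every petal $H$ of $G$, the halting condition gives $\spp(H)\cap T_1\ne\emptyset$, and since $\spp(H)$ is a subpath of $C$ of length at most $6$ intersecting $T_1$, it is entirely within distance $6$ of $T_1$, whence $\spp(H)\subseteq T_{petal}$.

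The remaining case $t\ge k+1$ is the main obstacle, since the petals share disjoint supports on $C$ but that alone does not make them pairwise vertex-disjoint. The structural input is Lemma~\ref{lem:consecutive}: if two petals $H_i,H_j$ with disjoint supports shared a vertex $v$, then $v\notin V(C)$ (otherwise $v\in\spp(H_i)\cap\spp(H_j)$), so $v\in N(C)\setminus D$ and its at most three consecutive $C$-neighbors must meet both supports, forcing $\dist_C(\spp(H_i),\spp(H_j))\le 2$. Conflicts therefore only arise between petals whose supports are very close on $C$, so strengthening the greedy rule to demand each new support to have $C$-distance at least $3$ from $T_1$ makes the extracted petals pairwise vertex-disjoint, and the first $k+1$ of them are the desired hole packing; the distance lemmas of Section~\ref{sec:lemmas} absorb the small additional margin into the neighborhood accounting and keep the hitting-set bound at $19k$.
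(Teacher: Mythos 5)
Your plan follows the same greedy-plus-neighborhood template the paper uses, but the implementation has a real flaw in the window test that cascades through the rest of the argument. You detect a hole in $G[Z_W]$ for a seven-vertex window $W=\{w_1,\dots,w_7\}$ and assert that any such hole has support contained in $W$. That is false: a vertex $u\in Z_{w_1}$ may also lie in $Z_{w_0}$ (the vertex of $C$ preceding $W$), since $u$ may have up to three consecutive neighbors on $C$ straddling the boundary of the window. Then $w_0\in\spp(H)$, so $\spp(H)$ can spill over to as many as nine vertices. Two consequences: the detected hole need not be a petal at all (its support could have $8$ or $9$ vertices), and, more importantly, $\spp(H)$ can intersect the accumulated set $T_1$ even though the window $W$ was chosen disjoint from $T_1$, which breaks the disjointness you need for the packing. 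The paper sidesteps this by enumerating nine consecutive vertices $v_0,\dots,v_8$ and searching for a hole in $G[Z_{\{v_1,\dots,v_7\}}\setminus Z_{\{v_0,v_8\}}]$: deleting $Z_{\{v_0,v_8\}}$ removes exactly the vertices whose $C$-neighborhood could exit the seven-vertex core, forcing $\spp(H)\subseteq\{v_1,\dots,v_7\}$ and hence $\abs{X}\le 7k$, which is where the $7+2\cdot 6=19$ accounting comes from.

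Your diagnosis of the packing step is also off in the other direction. You write that ``the petals share disjoint supports on $C$ but that alone does not make them pairwise vertex-disjoint,'' but in fact disjoint supports do force vertex-disjointness for $D$-avoiding sunflowers: any shared vertex $v$ lies in $N[C]\setminus D$, and if $v\in V(C)$ then $v$ itself is a common support vertex, while if $v\notin V(C)$ then every $C$-neighbor of $v$ lies in $Z_w$ for some $w\in V(C)$ and so $w\in\spp(H_i)\cap\spp(H_j)$. Your own next sentence already exhibits this but stops short, concluding only that the supports are within distance $2$ rather than that they intersect. As a result, the proposed repair — imposing $C$-distance at least $3$ between new windows and $T_1$ — is unnecessary once the window test is corrected, and it is also unverified: pushing windows away from $T_1$ weakens the halting guarantee (a petal whose support sits within distance $2$ of $T_1$ is never tested), which forces a larger neighborhood than $N^6_C$ and would degrade the bound past $19k$; the hand-wave that ``the distance lemmas absorb the small additional margin'' does not hold up. The clean fix is the paper's exclusion of $Z_{\{v_0,v_8\}}$, after which your original disjointness and $19k$ accounting go through exactly as you intended.
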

\begin{proof}
Set $X:=\emptyset$, $\mathcal{C}=\emptyset$, and $counter:=0$ at the beginning. We recursively do the following until the counter reaches $k+1$. For every set of nine consecutive vertices $v_0, v_1, v_2, \ldots, v_7, v_8$ of $C$ with $\{v_1, v_2, \ldots, v_7\}\cap X=\emptyset$, we test if $G[Z_{\{v_1, v_2, \ldots, v_7\}}\setminus Z_{\{v_0, v_8\}}]$ contains a hole $H$, and if so, add vertices in $\{v_1, v_2, \ldots, v_7\}$ to $X$, and add $H$ to $\mathcal{C}$ and increase the counter by 1. If the counter reaches $k+1$, then we stop. 
If the counter does not reach $k+1$, then
we have $\abs{X}\leq 7k$. In this case, we set $T_{petal}$ as the $6$-neighborhood of $X$ in $C$. So, $\abs{T_{petal}}\le (7+12)k=19k$.

By construction, any hole $H \in \mathcal{C}$ has a support that is fully contained in the considered set $\{v_1, v_2, \ldots, v_7\}$. 
Observe that we choose this set to be disjoint from $X$ constructed thus far.  
Therefore, holes in $\mathcal{C}$ are pairwise vertex-disjoint; otherwise, their supports have a common vertex. 
This implies that if the counter reaches $k+1$, then we can output $k+1$ vertex-disjoint holes.

Assume the counter does not reach $k+1$. In this case, we claim that for every petal $H$, $\spp(H)\subseteq T_{petal}$.
Let $H$ be a petal. By the definition of a petal and by Lemma~\ref{lem:connectedsupport}, there is a set of $7$ consecutive vertices $w_1, w_2, \ldots, w_7$ in 
$C$ such that $\spp(H)\subseteq \{w_1, w_2, \ldots, w_7\}$. If the set $\{w_1, w_2, \ldots, w_7\}$ is disjoint from $X$, then the above procedure must have considered this set and added it to $X$, a contradiction. 
Therefore, $\{w_1, w_2, \ldots, w_7\}\cap X\neq \emptyset$. Then during the step of adding 6-neighborhood of $X$ to $T_{petal}$, 
$\{w_1, w_2, \ldots, w_7\}$ is added to $T_{petal}$, and thus we have $\spp(H)\subseteq \{w_1, w_2, \ldots, w_7\} \subseteq T_{petal}$ as claimed.
\end{proof}

In what follows, we reserve $T_{petal}$ to denote a vertex subset of $V(C)$ that contains the support of every petal. 

\subsection{Polarization of $D$-avoiding sunflowers.}\label{subsec:allisfull}

We show that every $D$-avoiding sunflower in $G-T_{petal}$ is full. 
This will imply that, in order to hit every $D$-avoiding sunflower it is sufficient to find a hitting set for full sunflowers. We illustrate Lemma~\ref{lem:sppdichotomy} in Figure~\ref{fig:sppdichotomy}.

\begin{figure}
  \centering
  \begin{tikzpicture}[scale=0.7]
  \tikzstyle{w}=[circle,draw,fill=black!50,inner sep=0pt,minimum width=4pt]

   \draw (-2,0)--(5,0);\draw[thick, dotted](5,0)--(6,0);
   \draw (6,0)--(13,0);
	\draw(-2, 0)--(-3,-0.5);
	\draw(13, 0)--(14,-0.5);
      \draw (-3,-.5) node [w] {};
       \draw (14,-.5) node [w] {};
 	\draw[dashed](15, -1)--(14,-0.5);
	\draw[dashed](-4,-1)--(-3,-0.5);
 
 \foreach \y in {-1, 12}{
\draw[dashed, rounded corners] (\y, 3.3)--(\y+.5, 3.3)--(\y, -.5)--(\y-.5, 3.3)--(\y, 3.3);
}

     \node at (-1, 4) {$Z_{v_1}$};
     \node at (12, 4) {$Z_{v_{\ell}}$};
     \node at (-1, -.8) {$v_1$};
     \node at (2, -.8) {$v_4$};
     \node at (12, -.8) {$v_\ell$};

     \node at (-2, -1) {$C$};

     \node at (9, 3.6) {$P$};
     \node at (9, 2.2) {$Q$};

     \node at (-2, 2.4) {$x$};
     \node at (13, 2.4) {$y$};

	 \draw (-1,2.4)--(3, 3.5)--(8, 3.5)-- (12,2.4);
	 \draw (-1,2.4)--(-.5, 2.3) edge [bend right] (0, 0);
	 \draw (0,0) edge [bend right] (0.5, 2.1);
	 \draw (0.5, 2.1)--(3, 1.6);

	 \draw (3, 1.6)--(8, 1.6)-- (12,2.4);
       \draw (-1,2.4) node [w] () {};
     \draw (12,2.4) node [w] () {};
 
          \path[-] (2, 0) edge [bend right] (1.5, 1.9);
         \path[-] (2, 0) edge [bend right] (1.5, 3.1);

 \foreach \y in {-2,...,13}{
      \draw (\y,0) node [w] (a\y) {};
     }

   \end{tikzpicture}     \caption{A $D$-avoiding sunflower $H$ in $G-T_{petal}$ with $\spp (H)\neq V(C)$ in Lemma~\ref{lem:sppdichotomy}, which is not a petal. This hole $H$ has to intersect one of $\{v_1, v_2, v_3\}$ and $\{v_{\ell-2}, v_{\ell-1}, v_{\ell} \}$. If $H$ intersects $\{v_1, v_2, v_3\}$ as in the figure, then there is a petal with support contained in $\{v_1, \ldots, v_6\}$. The existence of the intersection on $\{v_1, v_2, v_3\}$ implies that $T_{petal}$ does not contain one of $v_1, v_2$, and $v_3$. This contradicts the fact that $T_{petal}$ contains the support of every petal. }\label{fig:sppdichotomy}
\end{figure}

\begin{LEM}\label{lem:sppdichotomy}
Every $D$-avoiding sunflower $H$ in $G-T_{petal}$ is full, that is, $\spp(H)=V(C)$. 
\end{LEM}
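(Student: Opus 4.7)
The plan is to argue by contradiction: suppose $H$ is a $D$-avoiding sunflower in $G-T_{petal}$ with $\spp(H)\neq V(C)$. Since $V(H)\subseteq N[C]\setminus D$ and $H$ is connected, Lemma~\ref{lem:connectedsupport} yields that $\spp(H)$ is connected in $C$, and hence forms a proper subpath $p_1 p_2 \cdots p_m$ of $C$. Moreover, since $G-V(C)$ is chordal and therefore hole-free, $H$ cannot be disjoint from $V(C)$, so $V(H)\cap V(C)\neq\emptyset$ and any vertex in this intersection lies in $\spp(H)$.

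First I would dispose of the easy case $m\leq 7$: here $H$ is itself a petal, so Lemma~\ref{lem:petalcover} gives $\spp(H)\subseteq T_{petal}$; any $v\in V(H)\cap V(C)\subseteq \spp(H)$ then lies in $V(H)\cap T_{petal}$, contradicting the hypothesis. The nontrivial case is $m\geq 8$, and the strategy is to exhibit a petal $H'$ of $G$ whose support contains some $v\in V(H)\cap V(C)$; then $v\in\spp(H')\subseteq T_{petal}$, again by Lemma~\ref{lem:petalcover}, which gives the same contradiction. The critical point is that the petal $H'$ need not be a subgraph of $H$; it only has to be some hole in $G$ with small support that reaches $Z_v$ for some chosen $v$.

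To construct such a petal, I would fix $v=p_j\in V(H)\cap V(C)$ and examine the two neighbors $x,y$ of $v$ on the hole $H$. Since $H$ is $D$-avoiding, Lemma~\ref{lem:consecutive} confines the $C$-neighbors of each of $x,y$ to three consecutive vertices around $v$, so the subpath $xvy$ of $H$ lies in the tunnel over the $5$-vertex window $W=\{p_{j-2},\ldots,p_{j+2}\}$. The goal is then to find an induced $(x,y)$-path $P$ inside $G[Z_W]$ whose internal vertices are non-adjacent to $v$; applying Lemma~\ref{lem:twopaths} to $xvy$ and $P$ would produce a hole through $v$ with support contained in $W$, i.e., a petal $H'$ with $v\in\spp(H')$.

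The main obstacle will be to secure the existence of such a completion $P$. The key leverage is the minimality of $C$: were no such $P$ available inside $G[Z_W]$, one could follow $H$ further beyond $x$ or $y$ and splice a portion of it with a short arc of $C$ to produce, via Lemma~\ref{lem:twopaths}, a cycle strictly shorter than $C$, contradicting the shortest-hole property. The proof therefore reduces to a case analysis on where $x,y$ lie (on $V(C)$ or in $N(C)\setminus V(C)$) and on their precise $C$-neighborhoods inside $W$; in each configuration one uses Lemma~\ref{lem:farnonadj} to control how far adjacency can reach across $C$ and produces either the desired induced detour or the forbidden short hole.
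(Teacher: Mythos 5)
Your high-level strategy --- exhibit a petal $H'$ whose support captures some $v\in V(H)\cap V(C)$ and conclude $v\in T_{petal}\cap V(H)$ --- matches the paper's, and the base case $\abs{\spp(H)}\le 7$ is handled correctly. However, the hard case $\abs{\spp(H)}\ge 8$ has a genuine gap at the crucial step: you assert the existence of an induced $(x,y)$-detour $P$ inside $G[Z_W]$ whose internal vertices avoid $N(v)$, and justify it only by a vague appeal to ``minimality of $C$'' (``follow $H$ further beyond $x$ or $y$ and splice \ldots\ to produce a cycle strictly shorter than $C$''). This is not established, and as stated it does not follow: the sunflower $H$ can be very long, so a cycle spliced from a portion of $H$ together with a short arc of $C$ need not be shorter than $C$, and it is also unclear that the splice is a hole rather than merely a cycle. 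Moreover, your window $W$ is centered on an arbitrary $v=p_j\in V(H)\cap V(C)$, but no such detour inside $G[Z_W]$ need exist.

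The paper avoids this entirely by first proving a structural fact that your proposal misses: for $4\le i\le \ell-3$, the vertex $v_i$ cannot lie on $H$. (If it did, $v_i$ lies on one arc of $H$, while the other arc --- whose support is all of $\spp(H)$ by Lemma~\ref{lem:connectedsupport} --- contains some $v_i'\in Z_{v_i}$; by Lemma~\ref{lem:consecutive} $v_i'$ is distinct from the endpoints $x,y$, so $v_iv_i'$ is a chord of $H$.) Thus $V(H)\cap V(C)$ lives in the two boundary triples $\{v_1,v_2,v_3\}$ and $\{v_{\ell-2},v_{\ell-1},v_\ell\}$. The petal $H'$ is then built \emph{from the arcs of $H$ themselves}: with $x\in Z_{v_1}\cap V(H)$, $y\in Z_{v_\ell}\cap V(H)$, and $P,Q$ the two arcs, let $p$ and $q$ be the first vertices (from $x$) of $P$ and $Q$ lying in $Z_{v_4}$; the cycle $pPx\odot xQq\odot qv_4p$, closed through the single $C$-vertex $v_4$, is a hole with support in $\{v_1,\ldots,v_6\}$. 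No external detour needs to be found, and the petal's support automatically covers $\{v_1,v_2,v_3\}$, hitting whichever vertex of $V(H)\cap V(C)$ sits there. You would need this boundary confinement --- or some equivalent argument --- for your case analysis to close; without it, your proof does not go through.
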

\begin{proof}
Suppose $H$ is a $D$-avoiding sunflower in $G-T_{petal}$ such that $8\le \abs{\spp(H)}< \abs{V(C)}$. 
By Lemma~\ref{lem:connectedsupport}, $C[\spp(H)]$ is a subpath of $C$. 
Let $C[\spp(H)]=v_1v_2 \cdots v_{\ell}$. Choose $x\in V(H)\cap Z_{v_1}$ and $y\in V(H)\cap Z_{v_{\ell}}$, and let $P$ and $Q$ be the two $(x,y)$-paths on $H$. 
As each of $C[\spp(P)]$ and $C[\spp(Q)]$ is connected by Lemma~\ref{lem:connectedsupport}, 
we have $\spp(P)=\spp(Q)= \spp(H)$.

Recall that $V(H)\cap V(C)\neq \emptyset$ and $V(H)\cap V(C)$ must be contained in $\spp(H)$. We argue that any $v_i\in \spp(H)$ with $i\in \{4, 5, \ldots, \ell-3\}$ does not lie on $H$. Suppose $v_i\in V(H)\cap V(C)$ for some $4\leq i\leq \ell-3$. Notice that both $x$ and $y$ are distinct from $v_i$. Therefore, $v_i$ belongs to exactly one of $P$ and $Q$. Without loss of generality, we assume $v_i\in V(P)$. Since $v_i\in \spp(Q)$, $Z_{v_i}\cap V(Q)\neq \emptyset$ and thus we can choose a vertex $v'_i$ from the set $Z_{v_i}\cap V(Q)$. Lemma~\ref{lem:consecutive} and $v'_i\notin D$ imply that $v'_i$ is not adjacent to $v_1$ or $v_{\ell}$, and thus $v'_i\notin Z_{v_1}$ and $v'_i\notin Z_{v_{\ell}}$. This means that $v'_i$ is distinct from $x$ and $y$, especially $v'_i$ is an internal vertex of $Q$. However, $v_iv'_i\in E(G)$ is a chord of $H$, a contradiction. 

Therefore, $v_i \notin V(H)$ for every $4\leq i\leq \ell-3$. At least one of $\{v_1,v_2,v_3\}$ and $\{v_{\ell-2},v_{\ell-1},v_{\ell}\}$ intersects with $V(H)$, and we assume that $\{v_1,v_2,v_3\}$ intersects with $V(H)$ without loss of generality (a symmetric argument works in the other case). From each of $P$ and $Q$, choose the first vertex (starting from $x$) that lies in $Z_{v_4}$ and call them $p\in V(P)$ and $q\in V(Q)$ respectively; the existence of such vertices follows from $v_4\in \spp(P)=\spp(Q)$. Let $H'$ be the cycle $pPx\odot xQq\odot qv_4p$.

Since $p$, $q$ are the first vertices contained in $Z_{v_4}$, 
$v_4$ has no neighbors in $(V(xPp)\cup V(xQq))\setminus \{p, q\}$, and thus $H'$ is a hole.
By Lemma~\ref{lem:consecutive}, we have $\spp(H')\subseteq \{v_1, v_2, \ldots, v_6\}$, that is, $H'$ is a petal. Because $T_{petal}$ contains the support of every petal, $\{v_1, v_2, v_3, v_4\}\subseteq \spp(H')\subseteq T_{petal}$.
On the other hand, $\{v_1,v_2,v_3\}\cap V(H)\neq \emptyset$ and $V(H)\cap T_{petal}=\emptyset$ implies  $\{v_1,v_2,v_3\}\setminus T_{petal}\neq \emptyset$. This is a contradiction. This completes the proof. 
\end{proof}

\subsection{Hitting all $D$-avoiding sunflowers.}\label{subsec:sunflowerwithout}
In this subsection we focus on full sunflowers.

\begin{PROP}\label{prop:hitsunflower}
There is a polynomial-time algorithm which finds either $k+1$ vertex-disjoint holes in $G$ or a vertex set $T_{full}\subseteq V(G)\setminus T_{petal}$ of at most $3k+14$ vertices such that $T_{petal}\cup T_{full}$ hits all full sunflowers.  
\end{PROP}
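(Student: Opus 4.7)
The plan hinges on locating a long petal-free stretch in $C$ on which to apply Menger's theorem in two ``directions.'' Since $\abs{V(C)}>\mu_k$ while $\abs{T_{petal}}\le 19k$, and $\mu_k$ far exceeds $25(19k+1)$, we can locate $25$ consecutive vertices $v_{-2},v_{-1},v_0,\ldots,v_{22}$ of $V(C)\setminus T_{petal}$. Inside this petal-free window, Lemma~\ref{lem:sppdichotomy} guarantees that every $D$-avoiding sunflower not already hit by $T_{petal}$ is full, hence wraps the whole of $C$. We will exploit this rigidity by catching, for any such residual sunflower, both a ``short-arc'' crossing through the window and a ``long-arc'' traversal around the rest of $C$.

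Following the sketch in Section~\ref{sec:overview}, I set up two Menger problems on $G_{nbd}$: let $\mathcal{P}$ be a maximum family of pairwise vertex-disjoint $(Z_{v_0},Z_{v_{20}})$-paths in $G_{nbd}$ whose supports lie inside $\{v_{-2},\ldots,v_{22}\}$, and let $\mathcal{Q}$ be a maximum family of pairwise vertex-disjoint $(Z_{v_5},Z_{v_{16}})$-paths in $G_{nbd}$ whose supports contain neither $v_8$ nor $v_{13}$. By Lemma~\ref{lem:pathsupport}, paths of $\mathcal{P}$ traverse the short arc through $v_{10}$, while excluding $v_8,v_{13}$ forces every path of $\mathcal{Q}$ to take the long way around $C$. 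Both families, together with corresponding minimum separators, can be computed in polynomial time by Theorem~\ref{thm:menger}, applied to a standard auxiliary graph built from $G_{nbd}$ with the allowed supports restricted.

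\emph{Case 1: both $\abs{\mathcal{P}}$ and $\abs{\mathcal{Q}}$ exceed $k + O(1)$.} I produce $k+1$ pairwise vertex-disjoint holes by splicing. The ``buffer strips'' $\{v_0,\ldots,v_5\}$ and $\{v_{16},\ldots,v_{20}\}$ are petal-free, so by Lemma~\ref{lem:sppdichotomy} only a bounded number of $\mathcal{P}$-paths can obstruct a given $Q_i\in\mathcal{Q}$ near the entrances $Z_{v_0},Z_{v_5},Z_{v_{16}},Z_{v_{20}}$. After discarding a constant number of conflicting paths from each family, each surviving $Q_i$ is extended inward to $Z_{v_0}$ and outward from $Z_{v_{20}}$ using disjoint subpaths of a paired $P_j\in\mathcal{P}$ to yield a cycle. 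Because a vertex of $Q_i$ situated on the far side of $C$ has no neighbor on the pasted short-arc portion (Lemma~\ref{lem:farnonadj}), Lemma~\ref{lem:twopaths} certifies that the spliced cycle contains an induced hole, and the internal disjointness of $\mathcal{P}$ and $\mathcal{Q}$ makes the resulting $k+1$ holes vertex-disjoint.

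\emph{Case 2: one of the families is small}, say $\abs{\mathcal{Q}}\le 3k + O(1)$ (a symmetric argument handles small $\abs{\mathcal{P}}$, the ``bad'' direction for sunflowers being the long arc). By Theorem~\ref{thm:menger} I extract a minimum separator $S$ of size $\abs{\mathcal{Q}}$ for the auxiliary problem defining $\mathcal{Q}$, and I set $T_{full}$ to be $S$ together with a constant-size collection of window vertices (notably $v_5,v_8,v_{13},v_{16}$ and a few neighbors) that closes off every long-arc route between $Z_{v_5}$ and $Z_{v_{16}}$. Every full sunflower has support $V(C)$ and thus contains a subpath from $Z_{v_5}$ to $Z_{v_{16}}$ taking the long way around $C$, hence avoiding $v_8,v_{13}$; such a subpath must be hit by $T_{full}$. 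Combining with $T_{petal}$ covers all full, hence all remaining $D$-avoiding, sunflowers. A careful bookkeeping of the slack needed in Case~1 and the separator size in Case~2 yields the final bound $\abs{T_{full}}\le 3k+14$.

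The step I expect to be the main obstacle is the splicing in Case~1: one must simultaneously track how many $\mathcal{P}$-paths are ``used up'' at each of the four entrances $Z_{v_0},Z_{v_5},Z_{v_{16}},Z_{v_{20}}$, verify the hole-producing hypotheses of Lemma~\ref{lem:twopaths} on each spliced cycle, and reroute around incidental chords inside the petal-free window guaranteed by Lemma~\ref{lem:sppdichotomy}. The precise constants---the window length $25$, the buffers $\{v_{-2},v_{-1}\},\{v_{21},v_{22}\}$, and the gap of $5$ between $\mathcal{P}$'s and $\mathcal{Q}$'s endpoint triples---are tuned exactly to make this accounting go through while keeping $\abs{T_{full}}\le 3k+14$.
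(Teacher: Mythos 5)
Your proposal follows the same two-direction Menger strategy as the paper and matches the overview in Section~\ref{sec:overview} closely (including the off-by-one $v_{16}$ versus $v_{15}$; the actual proof of Proposition~\ref{prop:hitsunflower} uses $(Z_{v_5},Z_{v_{15}})$-paths inside $G[Z_{V(P_2)}]$ for the long arc). However, there are two concrete gaps.

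First, the Case~1 splicing is exactly the hard content of Lemma~\ref{lem:connectingpaths}, which you explicitly defer as ``the main obstacle.'' This is not just constant-tracking: one must prove Claim~\ref{claim:ef} (that $\ell(P)$ is adjacent to $a(Q)$ using the petal-free window, and likewise on the other side), define the reduced family $\mathcal{Q}_2$ of clipped paths $a(Q)Qb(Q)$, verify via a $Z_{v_{10}}$ vertex and Lemma~\ref{lem:twopaths} that each spliced cycle contains a hole, and --- crucially --- prove that no internal vertex of a $\mathcal{P}$-path coincides with an internal vertex of a $\mathcal{Q}_2$-path (the paper rules this out by showing such a coincidence would create a petal). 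Without this, the resulting ``holes'' need not be vertex-disjoint, and the argument does not close.

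Second, in Case~2 you set $T_{full}$ to be the Menger separator \emph{plus} additional window vertices $v_5,v_8,v_{13},v_{16}$, which the paper's proof does not do and which risks blowing past the $3k+14$ budget. The paper shows the separator $X$ alone suffices: given a full sunflower $H$, take a vertex $w\in Z_{v_{10}}\cap V(H)$ (or the analogous vertex on the long arc) and look at the component $F$ of the restriction of $H$ on the tunnel; by Lemma~\ref{lem:connectedsupport} its endpoints land in the entrances, and the only non-trivial case is when both land in the \emph{same} entrance --- which is then ruled out by Lemma~\ref{lem:overlay} because it would create a petal in the petal-free window. Your sketch asserts a full sunflower ``contains a subpath taking the long way around'' but does not address this same-entrance possibility, nor why the separator catches that subpath without the extra vertices; this is precisely where the paper needs the overlay lemma.
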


Our strategy is to find two collections of many vertex-disjoint paths so that  we can link the paths to obtain many vertex-disjoint holes.
The following lemma explains how to do this.
Note that since $C$ has length greater than $\mu_k$, $V(C)\setminus T_{petal}$ contains $25$ vertices that are consecutive in $C$.

\begin{LEM}\label{lem:connectingpaths}
Let $v_{-2}v_1v_0v_1 \cdots v_{20}v_{21}v_{22}$ be a subpath of $C$ that does not intersect $T_{petal}$, 
and $X$ be the $(v_0, v_{20})$-subpath of $C$ containing $v_1$ 
and $Y$ be the $(v_5, v_{15})$-subpath of $C$ containing $v_4$.
Let $\mathcal{P}$ be a collection of vertex-disjoint $(Z_{v_0},Z_{v_{20}})$-paths in $G[Z_{V(X)}]$, and  
let $\mathcal{Q}$ be a collection of vertex-disjoint $(Z_{v_5},Z_{v_{15}})$-paths in $G[Z_{V(Y)}]$. 
Given such $\mathcal{P}$ and $\mathcal{Q}$, if $\abs{\mathcal{P}}\ge k+13$ and $\abs{\mathcal{Q}}\ge  3k+15$, then one can output $k+1$ vertex-disjoint holes in polynomial time.
\end{LEM}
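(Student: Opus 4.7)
The plan is to pair paths from $\mathcal{P}$ with paths from $\mathcal{Q}$ to build $k+1$ pairwise vertex-disjoint holes; the large sizes of $\mathcal{P}$ and $\mathcal{Q}$ supply the slack needed to handle conflicts in the boundary tunnels $Z_{\{v_0,\ldots,v_5\}}$ and $Z_{\{v_{15},\ldots,v_{20}\}}$.

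First I would pin down the supports. Every $P\in\mathcal{P}$ lies in $G[Z_{V(X)}]$ with endpoints in $Z_{v_0}$ and $Z_{v_{20}}$, so by Lemma~\ref{lem:connectedsupport} $\spp(P)$ is a connected subset of $V(X)$ containing $v_0$ and $v_{20}$, forcing $\spp(P)=V(X)$. Analogously $\spp(Q)=V(Y)$ for every $Q\in\mathcal{Q}$, so $Q$ visits $Z_{v_0}$ and $Z_{v_{20}}$ at internal vertices. Let $q_0^{*}\in Z_{v_0}$ be the first visit to $Z_{v_0}$ and $q_{20}^{*}\in Z_{v_{20}}$ the last visit to $Z_{v_{20}}$ along $Q$, and split $Q$ accordingly into subpaths $Q^L$, $Q^M$, $Q^R$. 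Applying Lemma~\ref{lem:pathsupport} to $Q^L$, and using that $V(Y)$ omits $v_6,\ldots,v_{14}$ (so the long $(v_5,v_0)$-subpath of $C$ meets $V(Y)$ in a disconnected set), one sees $Q^L\subseteq G[Z_{\{v_0,\ldots,v_5\}}]$; symmetrically $Q^R\subseteq G[Z_{\{v_{15},\ldots,v_{20}\}}]$. Hence $Q^M$ is vertex-disjoint from every $P\in\mathcal{P}$ outside $\{q_0^{*},q_{20}^{*}\}$: any collision between $V(P)$ and $V(Q)$ must occur inside the two boundary tunnels.

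Next, I would discard boundary-heavy paths: at most one path of $\mathcal{P}$ and at most one of $\mathcal{Q}$ uses any given $C$-vertex from $\{v_0,v_1,\ldots,v_5,v_{15},\ldots,v_{20}\}$ (twelve vertices), so removing at most $12$ paths from each collection leaves residuals with $|\mathcal{P}'|\ge k+1$ and $|\mathcal{Q}'|\ge 3(k+1)$, none of whose members uses those twelve $C$-vertices. Then I build the bipartite conflict graph on $\mathcal{P}'\cup\mathcal{Q}'$ joining $P$ and $Q$ when $V(P)\cap(V(Q^L)\cup V(Q^R))\ne\emptyset$; every vertex of $V(Q^L)\cup V(Q^R)$ lies on at most one $P\in\mathcal{P}'$, so a greedy matching (enabled by the factor-$3$ headroom in $|\mathcal{Q}'|$) produces $k+1$ vertex-disjoint pairs $(P_i,Q_i)$. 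For each pair, concatenate $P_i$ with $Q_i^M$ reversed through short links at $v_0$ and $v_{20}$; Lemma~\ref{lem:twopaths} applied to the two internally vertex-disjoint $(v_0,v_{20})$-paths $v_0 p_0^{(i)} P_i p_{20}^{(i)} v_{20}$ and $v_0 q_0^{*(i)} Q_i^M q_{20}^{*(i)} v_{20}$ yields an induced hole inside the resulting closed walk (picking a vertex of $Q_i^M$ in a $Z_{v_j}$ with $v_j$ far from $V(X)$, which has no neighbor in $V(P_i)$ by Lemma~\ref{lem:farnonadj}).

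The main obstacle is ensuring the $k+1$ holes are pairwise vertex-disjoint. The naive construction routes every hole through the hub vertices $v_0$ and $v_{20}$, destroying disjointness. The remedy suggested by the outline in Section~\ref{sec:overview} is to use the petal-free property of $\{v_{-2},\ldots,v_5\}$ guaranteed by the position of the $25$-vertex window outside $T_{petal}$: because no petal has its support inside this window, each $Q_i^L$ is a \emph{private} link from $q_5^{(i)}$ to its own distinct $q_0^{*(i)}$, allowing one to replace the shared $v_0$-hub by vertex-disjoint detours inside $V(Q_i^L)$, and symmetrically on the right using $Q_i^R$. Making this rerouting rigorous, and matching the precise constants $k+13$ and $3k+15$ to the sum of the number of boundary discards (at most $12$ per collection) and the matching headroom, is where most of the careful bookkeeping will lie.
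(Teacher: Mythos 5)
Your high-level framing is on the right track --- discard boundary-heavy paths, exploit the petal-free property of the $25$-vertex window, pair off surviving $P$'s and $Q$'s --- but the proposal is missing the one concrete observation that makes the linking work, and you explicitly flag that gap yourself at the end.

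Your construction tries to close each hole ``through short links at $v_0$ and $v_{20}$,'' then notices (correctly) that this routes every hole through those two hub vertices and destroys disjointness, and then proposes to remedy it by ``vertex-disjoint detours inside $V(Q_i^L)$.'' That remedy is not carried out and does not match what the petal-free property actually gives. What the paper's proof uses at this point is the sharper Claim (its Claim~\ref{claim:ef}): for \emph{any} $P\in\mathcal{P}_1$ and $Q\in\mathcal{Q}_1$ surviving the boundary cull, the endpoint $\ell(P)\in Z_{v_0}$ of $P$ and the vertex $a(Q)\in Z_{v_0}$ of $Q$ are \emph{adjacent} in $G$ whenever they are distinct (and symmetrically $r(P)$ and $b(Q)$ at $Z_{v_{20}}$). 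This is exactly where the petal-freeness of $\{v_{-2},\dots,v_5\}$ is spent: if $\ell(P)$ and $a(Q)$ were non-adjacent, the two paths dangling off them plus Lemma~\ref{lem:twopaths} would produce a hole inside $Z_{\{v_0,v_1,v_2\}}$, i.e.\ a petal, contradiction. With this adjacency in hand there is no hub and no detour --- each cycle $C_i$ is obtained simply by gluing $P_i$ to the trimmed $a(Q_i)Q_ib(Q_i)$ with the two edges $a(Q_i)\ell(P_i)$ and $b(Q_i)r(P_i)$. Without this claim (or a worked-out substitute), your construction does not produce vertex-disjoint cycles, so the proof does not go through.

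Two smaller points. First, the ``bipartite conflict graph plus greedy matching'' step is more machinery than needed and is not justified as stated: the paper's bookkeeping is simply that each $P\in\mathcal{P}_1$ shares a vertex of $Z_{v_0}\cup Z_{v_{20}}$ with at most two members of $\mathcal{Q}_1$ (one at each endpoint of $P$), so discarding those leaves at least $(3k+3)-2(k+1)=k+1$ paths in $\mathcal{Q}_2$, which are then paired with $\mathcal{P}_1$ arbitrarily; no matching theorem is invoked. Second, even after pairing, vertex-disjointness of the resulting holes is not automatic: the paper separately proves that no internal vertex of $P_a$ lies on $Q_b$ (again via the petal-free property, not just the endpoint adjacency), and your proposal does not address cross-pair interior collisions inside $Z_{\{v_1,\dots,v_5\}}\cup Z_{\{v_{15},\dots,v_{19}\}}$. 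In short, the proposal identifies the right obstruction but does not supply the adjacency claim that the paper uses to resolve it, nor the interior-disjointness argument, so it is materially incomplete rather than an alternative complete proof.
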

\begin{proof}
We begin with the observation that there is no petal whose support contains a vertex of $\{v_{-2}, v_{-1}, \ldots, v_{22}\}$. 
This is because $\{v_{-2}, v_{-1}, \ldots, v_{22}\}\cap T_{petal}=\emptyset$ by assumption and  
$T_{petal}$ contains the support of every petal of $G$.
We may assume that every path in $\mathcal{P}$ is induced, 
and similarly every path in $\mathcal{Q}$ is induced.

We take a subset $\mathcal{P}_1$ of $\mathcal{P}$ with $\abs{\mathcal{P}_1}=k+1$ that consists of paths containing no vertices of $\{v_0, v_1, \ldots, v_5\}\cup \{v_{15}, v_{16}, \ldots, v_{20}\}$. Such a collection $\mathcal{P}_1$ exists because the paths of $\mathcal{P}$ are vertex-disjoint and at most 12 of them 
intersect with $\{v_0, v_1, \ldots, v_5\}\cup \{v_{15}, v_{16}, \ldots, v_{20}\}$. 
Similarly we take a subset $\mathcal{Q}_1$ of $\mathcal{Q}$ with $\abs{\mathcal{Q}_1}=3k+3$ 
that consists of paths containing no vertices of $\{v_0, v_1, \ldots, v_5\} \cup \{v_{15}, v_{16}, \ldots, v_{20}\}$. 

For each $P\in \mathcal{P}_1$, let $\ell(P)$ and $r(P)$  be the endpoints of $P$ contained in $Z_{v_0}$ and  $Z_{v_{20}}$, respectively.
For each $Q\in \mathcal{Q}_1$, let $a(Q)$ be the vertex of $Z_{v_0}\cap V(Q)$ that is closest to $Z_{v_5}\cap V(Q)$ in $Q$, and 
let $b(Q)$ be the vertex in $Z_{v_{20}}\cap V(Q)$  that is closest to $Z_{v_{15}}\cap V(Q)$ in $Q$.
By definition, no internal vertex of the subpath of $Q$ from $a(Q)$ to the vertex in $V(Q)\cap Z_{v_5}$ contains a neighbor of $v_0$, and 
similarly no internal vertex of the subpath of $Q$ from $b(Q)$ to the vertex in $V(Q)\cap Z_{v_{15}}$ contains a neighbor of $v_{20}$.  See Figure~\ref{fig:pathsystems} for an illustration.

\begin{figure}
  \centering
  \begin{tikzpicture}[scale=0.7]
  \tikzstyle{w}=[circle,draw,fill=black!50,inner sep=0pt,minimum width=4pt]

   \draw (-2,0)--(5,0);\draw[thick, dotted](5,0)--(6,0);
   \draw (6,0)--(13,0);
	\draw(-2, 0)--(-3,-0.5);
	\draw(13, 0)--(14,-0.5);
      \draw (-3,-.5) node [w] {};
       \draw (14,-.5) node [w] {};
 	\draw[dashed](15, -1)--(14,-0.5);
	\draw[dashed](-4,-1)--(-3,-0.5);
 
 \foreach \y in {-2,...,13}{
      \draw (\y,0) node [w] (a\y) {};
     }

 \foreach \y in {-1, 4, 7, 12}{
\draw[dashed, rounded corners] (\y, 3.3)--(\y+.5, 3.3)--(\y, -.5)--(\y-.5, 3.3)--(\y, 3.3);
}

     \node at (-1, 4) {$Z_{v_0}$};
     \node at (4, 4) {$Z_{v_5}$};
     \node at (7, 4) {$Z_{v_{15}}$};
     \node at (12, 4) {$Z_{v_{20}}$};
     \node at (-1, -.8) {$v_0$};
     \node at (4, -.8) {$v_5$};
     \node at (7, -.8) {$v_{15}$};
     \node at (12, -.8) {$v_{20}$};

     \node at (-2, -1) {$C$};

     \node at (1.5, 2.8) {$P$};
     \node at (1.5, 1.6) {$Q$};

     \node at (-2, 2.4) {$\ell(P)$};
     \node at (13, 2.4) {$r(P)$};

	 \node at (-1.8, 0.8) {$a(Q)$};
     \node at (12.8, 0.8) {$b(Q)$};

	 \draw (-1,2.4)--(12,2.4);
     \draw[rounded corners] (4,1.2)--(-2,1.2)--(-3,0.7);\draw[dashed] (-3, 0.7)--(-4,0.2);
     \draw[rounded corners] (7,1.2)--(13,1.2)--(14,0.7);\draw[dashed] (14, 0.7)--(15,0.2);
     \draw (-1,2.4) node [w] () {};
     \draw (4,1.2) node [w] () {};
     \draw (7,1.2) node [w] () {};
     \draw (12,2.4) node [w] () {};
     \draw (-1,1.2) node [w] () {};
     \draw (12,1.2) node [w] () {};

   \end{tikzpicture}     \caption{Paths $P\in \mathcal{P}_1$ and $Q\in \mathcal{Q}_1$ in Lemma~\ref{lem:connectingpaths}. The vertices $\ell(P)$ and $a(Q)$ for $P\in \mathcal{P}_1$ and $Q\in \mathcal{Q}_1$ are adjacent, otherwise, we can find a hole $Z_{\{v_0, v_1, v_2\}}$, which is a petal. For the same reason, $r(P)$ is adjacent to $b(Q)$ for $P\in \mathcal{P}_1$ and $Q\in \mathcal{Q}_1$.}\label{fig:pathsystems}
\end{figure}

\begin{CLAIM}\label{claim:ef}
Let $w\in \{\ell(P):P\in \mathcal{P}_1\}$ and $z\in \{a(Q):Q\in \mathcal{Q}_1\}$. If $w\neq z$, then $wz\in E(G)$.
Let $w\in \{r(P):P\in \mathcal{P}_1\}$ and $z\in \{b(Q):Q\in \mathcal{Q}_1\}$. If $w\neq z$, then $wz\in E(G)$.
\end{CLAIM}
\begin{proofofclaim}
Suppose $w\in \{\ell(P):P\in \mathcal{P}_1\}$ and $z\in \{a(Q):Q\in \mathcal{Q}_1\}$ such that $w\neq z$ and they are not adjacent. 
Let $P_w\in \mathcal{P}_1$ and $Q_z\in \mathcal{Q}_1$ such that 
$\ell(P_w)=w$ and $a(Q_z)=z$. Let $Q_z'$ be the subpath of $Q_z$ from $z$ to the vertex in $Z_{v_5}$.
Note that $v_0$ has no neighbors in $V(P_w)\setminus \{w\}$ and $V(Q_z')\setminus \{z\}$.
In case when $P_w$ and $Q_z'$ meet somewhere in $\bigcup_{i\in \{1, 2\}}Z_{v_i}$, 
we obtain a hole contained in $Z_{\{v_0, v_1, v_2\}}$ by Lemma~\ref{lem:twopaths}.
When $P_w$ and $Q_z'$ do not meet in $\bigcup_{i\in \{1, 2\}}Z_{v_i}$, 
there is a hole contained in $Z_{\{v_0, v_1, v_2\}}$  by Lemma~\ref{lem:twopaths} since $v_2$ has a neighbor in both $P_w$ and $Q_z'$. 
In both cases, there is a petal with support contained in $\{v_{i}: -2\le i\le 4\}$, a contradiction. 
We conclude that $wz\in E(G)$.
The proof of the latter statement is symmetric.
\end{proofofclaim}

For every $P\in \mathcal{P}_1$, $\ell(P)$ is the unique vertex of $Z_{v_0}\cap V(P)$. Therefore, for fixed $P\in \mathcal{P}_1$,
there is at most one path $Q\in \mathcal{Q}_1$ such that $V(Q)\cap V(P)\cap Z_{v_0}\neq \emptyset$. 
Similarly, there is at most one path of $\mathcal{Q}_1$ intersecting with $P$ at a vertex of $Z_{v_{20}}$. 
We construct a new collection $\mathcal{Q}_2$ so that
\begin{quote}
for every $Q\in \mathcal{Q}_1$, $\mathcal{Q}_2$ contains the subpath $a(Q)Qb(Q)$ if and only if 
$Q$ does not intersect with any $P\in \mathcal{P}_1$ at a vertex of $Z_{v_0}\cup Z_{v_{20}}$.
\end{quote}
Observe that $\mathcal{Q}_2$ contains at least $k+1$ paths because each path of $\mathcal{P}_1$ can make  
at most two paths of $\mathcal{Q}_1$ drop out. For our purpose, taking precisely $k+1$ paths is sufficient. 
Let $\mathcal{P}_1=\{P_1,\ldots , P_{k+1}\}$ and  $\mathcal{Q}_2=\{Q_1, Q_2, \ldots, Q_{k+1}\}$.
For each $i\in \{1, 2, \ldots, k+1\}$, we create a cycle $C_i$ from the disjoint union of $P_i\in \mathcal{P}_1$ and $Q_i\in \mathcal{Q}_2$ 
by adding two edges $a(Q_i)\ell(P_i)$ and $b(Q_i)r(P_i)$. 
Such edges exist by Claim~\ref{claim:ef}.

We observe that each $C_i$ contains a hole. To see this, take $v\in Z_{v_{10}}\cap V(P_i)$. 
As $Q_i\in \mathcal{Q}_2$ is a path of  $G[Z_{V(Y)}]$, Lemma~\ref{lem:consecutive} implies that $v$ is not adjacent to any vertex of $Q_i$. 
Note that $v$ is an internal vertex of 
the induced path $P_i$. Therefore, $G[V(C_i)]$ contains a hole  by Lemma~\ref{lem:twopaths}. 

Lastly, we verify that two holes contained in distinct cycles of $\{C_i:1\le i\le k+1\}$ are vertex-disjoint. To prove this, it is sufficient to show that 
for two integers $a,b\in \{1, 2, \ldots, k+1\}$, no internal vertex of $P_a\in \mathcal{P}_1$ is an internal vertex of $Q_b\in \mathcal{Q}_2$. 
Suppose the contrary, that is, $w$ is an internal vertex of $P_a\in \mathcal{P}_1$ and $Q_b\in \mathcal{Q}_2$ simultaneously for some $a,b$.
Since $Q_b$ is a path of $G[Z_{Y}]$ and $w\notin Z_{v_0}\cup Z_{v_{20}}$ is an internal vertex of $P_a$, we have $w\in Z_{\{v_1, v_2, v_3, v_4,v_5\}}\cup Z_{\{v_{15}, v_{16}, v_{17}, v_{18},v_{19}\}}$. 
Without loss of generality, we assume $w\in Z_{\{v_1, v_2, v_3, v_4,v_5\}}$. 
In fact, $w$ cannot be in $Z_{v_5}$ since otherwise, the path $Q'_b\in \mathcal{Q}_1$ having $Q_b$ as a proper subpath contains a vertex of $Z_{v_5}$ as an internal vertex; 
violating the definition of $(Z_{v_5},Z_{v_{15}})$-path. 
Now observe that $Q_b$ contains, as a subpath, a $Z_{v_0}$-path $Q'$ having all internal vertices in $Z_{\{v_1, v_2, v_3, v_4\}}\setminus Z_{\{v_0, v_5\}}$. 
Let $x,y$ be the endpoints of $Q'$. Since $v_0$ is adjacent to $x,y$ but is not adjacent to any internal vertices of $Q'$,
$G[\{v_0\}\cup V(Q')]$ contains a hole by  Lemma~\ref{lem:twopaths}, a contradiction. A symmetric argument holds for the case 
$w\in Z_{\{v_{15}, v_{16}, v_{17}, v_{18},v_{19}\}}$. Therefore, $\{C_i:1\le i\le k+1\}$ 
is a collection of vertex-disjoint holes of $G$, which completes the proof.
\end{proof}

\begin{proof}[Proof of Proposition~\ref{prop:hitsunflower}]
Note that $\abs{T_{petal}}\le 19k$ and $\abs{V(C)}>\mu_k> 25\abs{T_{petal}}$ by assumption. Thus, there are $25$ consecutive vertices on $C$ having no vertices in $T_{petal}$.
We choose a subpath $v_{-2}v_{-1}v_0v_1 \cdots v_{20}v_{21}v_{22}$ of $C$ that contains no vertices in $T_{petal}$.
 Let $P_1$ be the $(v_0, v_{20})$-subpath of $C$ containing $v_1$ 
 and $P_2$ be the $(v_5, v_{15})$-subpath of $C$ containing $v_4$.

We apply Menger's Theorem for $(Z_{v_0},Z_{v_{20}})$-paths in $G[Z_{V(P_1)}]$,
and then for $(Z_{v_5},Z_{v_{15}})$-paths in $G[Z_{V(P_2)}]$.  
We have one of the following.
\begin{itemize}
\item The first application of Menger's Theorem outputs a vertex set $X$ with $\abs{X}\leq k+12$ hitting all $(Z_{v_0},Z_{v_{20}})$-paths in $G[Z_{V(P_1)}]$.
\item The second application of Menger's Theorem outputs a vertex set $X$ with $\abs{X}\leq 3k+14$ hitting all $(Z_{v_5},Z_{v_{15}})$-paths in $G[Z_{V(P_2)}]$.
\item The first algorithm outputs at least $k+13$ vertex-disjoint paths, and the second algorithm outputs at least $3k+15$ vertex-disjoint paths. 
\end{itemize} 

In the third case,  by Lemma~\ref{lem:connectingpaths}, we can construct $k+1$ vertex-disjoint holes in polynomial time.

Suppose we obtained a vertex set $X$ in the first case.  We claim that $T_{petal}\cup X$ hits all full sunflowers. Suppose that there is a full sunflower $H$ avoiding every vertex of $T_{petal}\cup X$. By definition, $\spp (H)=V(C)$. In particular, $H$ contains at least one vertex of $Z_{v_{10}}$, say $w$. 
Let $F$ be the connected component of the restriction of $H$ on $G[Z_{V(P_1)}]$ containing $w$. 
Clearly $F$ is a path and 
its endpoints are contained in $Z_{\{v_0, v_{20}\}}$ because of Lemma~\ref{lem:connectedsupport}.

Suppose that the endpoints of $F$ are contained in distinct sets of $Z_{v_0}$ and $Z_{v_{20}}$, respectively. 
Let $F'$ be a subpath of $F$ that is a $(Z_{v_0}, Z_{v_{20}})$-path. Note that $F'$ is a $(Z_{v_0}, Z_{v_{20}})$-path of $G[Z_{V(P_1)}]$ 
because $F$ is a path of $G[Z_{V(P_1)}]$.
But it contradicts the fact that $X$ hits all such paths. 

Suppose that both endpoints of $F$ are contained in one of $Z_{v_0}$ or $Z_{v_{20}}$, say $Z_{v_0}$.
Let $F_1$ and $F_2$ be the two subpaths of $F$ from $w$ to its endpoints.
Then by Lemma~\ref{lem:connectedsupport}, both $\spp(F_1)$ and $\spp(F_2)$ contain the $(v_0, v_{10})$-subpath of $P_1=v_0v_1 \cdots v_{20}$. 
This implies that $\spp(F_1-w)\cap \spp(F_2-w)$ contains  $\{v_0, v_1, v_2\}$.
Since there are no edges between $F_1-w$ and $F_2-w$, Lemma~\ref{lem:overlay} implies that 
there is a hole contained in  $Z_{\{v_0, v_1, v_2\}}$.
This is a contradiction because we assumed $\{v_{-2}, v_{-1}, \ldots, v_{22}\}\cap T_{petal}=\emptyset$ while 
$T_{petal}$ contains the support of every petal of $G$.
Therefore, $T_{petal}\cup X$ hits every full sunflower. The case when both endpoints of $F$ are contained on $Z_{v_{20}}$ 
follows from a symmetric argument.

The second case when we obtain the vertex set $X$ with $\abs{X}\leq 3k+14$ can be handled similarly. Hence, in the first or second case, we can output a required vertex set $T_{full}$ of size at most $3k+14$ hitting every full sunflower in polynomial time.
\end{proof}

\subsection{Hitting all $D$-traversing sunflowers.}\label{subsec:sunflowerwith}

Our proof builds on the observation that any $D$-traversing sunflower entails another $D$-traversing sunflower $H'$ where the support of the path $H'- D$ is `small'. Then we exploit the min-max duality of vertex cover and matching on bipartite graphs
in order to cover such $D$-traversing sunflowers with small support.

The following lemma describes how to obtain such a sunflower $H'$. 
We depict the setting of Lemma~\ref{lem:smallsunflower} in Figure~\ref{fig:traversingsunflower}.

\begin{LEM}\label{lem:smallsunflower}
Let $v_1v_2 \cdots v_5$ be a subpath of $C$ such that $\{v_1,\ldots , v_5\}\cap T_{petal}=\emptyset$ and let $P=p_1p_2 \cdots p_m$ be a path in $G[D\cup Z_{\{v_1, v_2, \ldots, v_5\}}]$ such that
\begin{enumerate}[(i)]
\item $p_1$ is a $C$-dominating vertex and $p_2=v_3$, 
\item $p_m\in Z_{\{v_1, v_5\}}\setminus \{v_1, v_5\}$, 
\item all internal vertices of $P$ are in $Z_{\{v_2, v_3, v_4\}}\setminus Z_{\{v_1, v_5\}}$, and 
\item $E(G[V(P)])\setminus E(P)\subseteq \{p_1p_m\}$; that is, $G[V(P)]$ is either an induced path $p_1p_2 \cdots p_m$ or an induced cycle $p_1p_2 \cdots p_mp_1$, 
\item if $G[V(P)]$ is an induced cycle, then $m\ge 4$.
\end{enumerate}
Then there exists a $D$-traversing sunflower $H$ containing $p_1$  and $p_2$ such that $V(H)\setminus \{p_1\}\subseteq  Z_{\{v_1, v_2, v_3\}} \cap (V(P)\cup \{v_1\})$
 or $ V(H)\setminus \{p_1\} \subseteq  Z_{\{v_3, v_4, v_5\}}\cap (V(P)\cup \{v_5\})$. 
\end{LEM}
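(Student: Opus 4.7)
Without loss of generality take $p_m\in Z_{v_1}\setminus\{v_1\}$; the symmetric case $p_m\in Z_{v_5}\setminus\{v_5\}$ yields the $Z_{\{v_3,v_4,v_5\}}$ alternative by the same argument. Depending on (iv)--(v), either $G[V(P)]$ is an induced path (Case~1), in which case I propose $H:=p_1 p_2\cdots p_m v_1 p_1$, or $G[V(P)]$ is an induced cycle (Case~2), in which case I take $H:=p_1 p_2\cdots p_m p_1$, already a hole of length $m\ge 4$ by (v). Both candidates contain $p_1,p_2$, lie in $N[C]$, and pass through $p_1\in D$, so $H$ is a $D$-traversing sunflower. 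What remains is to verify that in Case~1 the cycle $H$ is induced of length at least $4$, and in both cases $V(H)\setminus\{p_1\}\subseteq Z_{\{v_1,v_2,v_3\}}\cap(V(P)\cup\{v_1\})$.

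The first task is routine: chords $p_i p_j$ inside $V(P)$ are excluded by (iv) (and in Case~1 even $p_1 p_m\notin E(G)$); chords of the form $v_1 p_i$ with $2\le i\le m-1$ are impossible because by (iii) such internal $p_i$ lie outside $V(C)\cup Z_{v_1}$; and $v_1 v_3\notin E(G)$ since $\dist_C(v_1,v_3)=2$. The length condition $m+1\ge 4$ holds because $m\ge 3$: indeed $m=2$ would force $p_m=v_3\in V(C)$, contradicting (ii).

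The main obstacle is the \emph{confinement claim}: every internal vertex $p_i$ lies in $Z_{\{v_2,v_3\}}$. First, no internal $p_i$ can lie in $V(C)$: otherwise (iii) forces $p_i\in\{v_2,v_3,v_4\}$, but $p_1\in D$ gives $p_1 p_i\in E(G)$, a chord forbidden by (iv) for $i\ge 3$. Hence any violation of the confinement claim means some internal $p_i\in Z_{v_4}\setminus Z_{\{v_2,v_3\}}$; by Lemma~\ref{lem:consecutive} such a $p_i$ has $v_4$ as its unique $C$-neighbor. Let $i^*$ be the \emph{largest} such index; note $i^*\ge 4$ because $p_3$ is adjacent to $p_2=v_3$ and not in $V(C)\cup D$, hence $p_3\in Z_{v_3}$.

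Two sub-cases remain. If $i^*\le m-2$, maximality of $i^*$ gives $p_{i^*+1}\in Z_{\{v_2,v_3\}}$; since $p_{i^*}\in Z_{v_4}\setminus Z_{v_3}$, Lemma~\ref{lem:threeconsecutive} with $a=v_2,b=v_4,S=\{v_3\}$ forces $p_{i^*+1}\in Z_{v_3}$, so $v_3 p_{i^*+1}=p_2 p_{i^*+1}\in E(G)$ is a chord of $P$ distinct from $p_1 p_m$ (as $i^*+1\ge 5$), contradicting (iv). If $i^*=m-1$, Lemma~\ref{lem:threeconsecutive} with $a=v_1,b=v_4,S=\{v_2,v_3\}$ forces $p_m\in Z_{v_1}\cap Z_{\{v_2,v_3\}}$; the sub-case $p_m\in Z_{v_3}$ produces the forbidden chord $p_2 p_m$ (using $m\ge 5$), while $p_m\in Z_{v_2}\setminus Z_{v_3}$ contradicts $p_{m-1}p_m\in E(G)$ via Lemma~\ref{lem:threeconsecutive} with $a=v_2,b=v_4,S=\{v_3\}$. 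Either way the confinement claim holds, whence $V(H)\setminus\{p_1\}\subseteq Z_{\{v_1,v_2,v_3\}}\cap(V(P)\cup\{v_1\})$ and the proof concludes.
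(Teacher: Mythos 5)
Your proposal is correct, and it differs from the paper in an interesting way for the key confinement step (showing that $P-p_1$ lies in $Z_{\{v_1,v_2,v_3\}}$ when $p_m\in Z_{v_1}$). The paper proves this by showing that an offending vertex in $Z_{v_4}\setminus Z_{v_3}$ would, via Lemma~\ref{lem:connectedsupport}, yield a $Z_{v_3}$-subpath $P'$ of $P$ with interior in $Z_{v_4}\setminus Z_{v_3}$; then $G[V(P')\cup\{v_3\}]$ is a petal whose support contains $v_3$, contradicting the hypothesis $v_3\notin T_{petal}$. You instead take the \emph{largest} bad index $i^*$ and use Lemma~\ref{lem:threeconsecutive} together with condition~(iv) to manufacture a forbidden chord of $P$ (namely $p_2 p_{i^*+1}$ or $p_2 p_m$), or an impossible edge $p_{m-1}p_m$. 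Your route is more elementary in the sense that it never invokes the $T_{petal}$ hypothesis $\{v_1,\ldots,v_5\}\cap T_{petal}=\emptyset$; the paper's route fits the petal-based machinery used throughout the section but is not strictly necessary here. Both proofs then produce the same witness hole: the paper applies Lemma~\ref{lem:twopaths} to the induced path $v_1p_1p_2$ and the walk $p_2Pp_m\odot p_m v_1$, which (since $G[\{p_2,\ldots,p_m,v_1\}]$ is a path) yields exactly your cycle $p_1p_2\cdots p_m v_1 p_1$, and in the induced-cycle case both take $G[V(P)]$ directly, using~(v) to ensure length at least~$4$. Your justification of $m\ge 3$, the ruling out of $v_1$-chords by~(iii), and the check $v_1 v_3\notin E(G)$ (since $C$ is chordless) are all sound.
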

\begin{proof}
We claim the following:
\begin{quote}
If $p_m\in Z_{v_1}$, then $P-p_1$ is contained in $Z_{\{v_1, v_2, v_3\}}$. Likewise, 
if $p_m\in Z_{v_5}$, then $P-p_1$ is contained in $Z_{\{v_3, v_4, v_5\}}$.
\end{quote} 

We only prove the first statement; the proof of the second statement will be symmetric.
Let us assume $p_m\in Z_{v_1}$. We observe that $P$ contains no vertex in $Z_{v_5}$ because 
all internal vertices of $P$ are in $Z_{\{v_2, v_3, v_4\}}\setminus Z_{\{v_1, v_5\}}$.

We first show that $P$ contains no vertex in $Z_{v_4}\setminus Z_{v_3}$.
Suppose the contrary and let $w\in V(P)\cap (Z_{v_4}\setminus Z_{v_3})$. 
Since both $C[\spp(p_2Pw)]$ and $C[\spp(wPp_m)]$ are connected by  Lemma~\ref{lem:connectedsupport}, 
$P$ contains a $Z_{v_3}$-subpath $P'$ whose internal vertices are all contained in $Z_{v_4}\setminus Z_{v_3}$.
Then $v_3$ is not adjacent to any internal vertex of $P'$, and by Lemma~\ref{lem:twopaths}, $G[V(P')\cup \{v_3\}]$ contains a hole, which is a petal.
This contradicts the fact that $v_3\notin T_{petal}$, because by the construction of $T_{petal}$ in Lemma~\ref{lem:petalcover}, $T_{petal}$ fully contains the support of every petal.
Hence, $P$ contains no vertex of $Z_{v_4}\setminus Z_{v_3}$ and we have $V(P)\setminus \{p_1\}\subseteq Z_{\{v_1, v_2, v_3\}}$.

We claim that there is a $D$-traversing sunflower as claimed. 
If $p_1p_m\in E(G)$, then $G[V(P)]$ is a hole as claimed by (iv) and due to the previous claim.
Hence, we may assume $p_1$ is not adjacent to $p_m$.
Observe that $v_1p_1p_2$ is an induced path.
Also, $p_2Pp_m\odot p_mv_1$ is a path from $v_1$ to $v_3$, and it does not contain $v_2$. Indeed, 
if the path $p_2Pp_m\odot p_mv_1$ contains $v_2$, then $\{p_1,p_2=v_3,v_2\}$ forms a triangle, a contradiction to (iv). 
Now, $p_1$ has no neighbors in $V(P)\setminus \{p_1, p_2\}$ as we assumed that $p_1$ is not adjacent to $p_m$.
Therefore, we can apply Lemma~\ref{lem:twopaths} with the induced path $v_1p_1p_2$ with $p_1$ as an internal vertex and 
the path $p_2Pp_m\odot p_mv_1$. It follows that there is a hole in $G[V(P)\cup \{v_1\}]$ containing $p_1$, $p_2$ 
such that $V(H)\setminus \{p_1\}\subseteq  Z_{\{v_1, v_2, v_3\}}$. The statement follows immediately.
\end{proof}

\begin{figure}
  \centering
  \begin{tikzpicture}[scale=0.7]
  \tikzstyle{w}=[circle,draw,fill=black!50,inner sep=0pt,minimum width=4pt]

   \draw (-2,0)--(11,0);
	\draw(-2, 0)--(-3,-0.5);
	\draw(11, 0)--(12,-0.5);
    \draw[dashed](13, -1)--(12,-0.5);
	\draw[dashed](-4,-1)--(-3,-0.5);

      \draw(5,3.5) [in=80,out=150] to (.5,0);
      \draw(.5,0)--(.5,1.5);
           \node at (0, 1.5) {$p_m$};
      
 	\draw (5, 3.5)--(4.5,0)--(4, 2)--(3,2.7)--(2, 2)--(1.5, 0.5)--(.5, 1.5);
        \draw (4,2) node [w] {};
        \draw (3,2.7) node [w] {};
        \draw (2,2) node [w] {};
        \draw (1.5,.5) node [w] {};
        \draw (.5,1.5) node [w] {};

 \node at (3, 1.5) {$P$};
 
 \foreach \y in {-1.5,0.5,  ..., 11}{
      \draw (\y,0) node [w] (a\y) {};
     }
     \node at (0.5, -.8) {$v_1$};
     \node at (2.5, -.8) {$v_2$};
     \node at (4.5, -.8) {$v_3=p_2$};
     \node at (6.5, -.8) {$v_4$};
     \node at (8.5, -.8) {$v_5$};

 \foreach \y in {0.5, 8.5}{
\draw[dashed, rounded corners] (\y, 3.3)--(\y+.5, 3.3)--(\y, -.5)--(\y-.5, 3.3)--(\y, 3.3);
}

     \node at (0.5, 4) {$Z_{v_1}$};
     \node at (8.5, 4) {$Z_{v_5}$};

\draw[rounded corners] (-3-.5+7,4)--(-3-.5+7,3)--(0-.5+7,3)--(0-.5+7,5)--(-3-.5+7,5)--(-3-.5+7,4);
       \draw (-2+7, 3.5) node [w] {};
     \node at (5.5, 3.5) {$p_1$};

     \node at (-1.5-.5+7, 4.5) {$D$};
     
  \end{tikzpicture}     \caption{Obtaining another sunflower in Lemma~\ref{lem:smallsunflower}.
  As $v_1p_1p_2$ is an induced path and $p_1$ has no neighbors in the set of internal vertices of $p_2Pp_m\odot p_mv_1$, $G[V(P)\cup \{v_1\}]$ contains a hole.}\label{fig:traversingsunflower}
\end{figure}
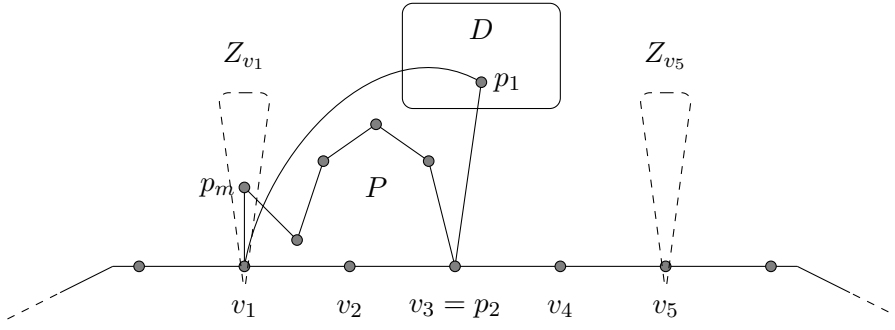

\begin{LEM}\label{lem:small}
Let $H$ be a $D$-traversing sunflower in $G-T_{petal}$ containing a $C$-dominating vertex $d$. Then there exist three consecutive vertices $x,y,z$ on $C$ and a $D$-traversing sunflower $H'$ containing $d$ such that $V(H)\cap \{x,y,z\}\neq \emptyset$ and $V(H')\setminus \{d\}\subseteq  Z_{\{x,y,z\}}$. 
\end{LEM}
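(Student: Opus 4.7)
The plan is to pick $v\in V(H)\cap V(C)$, fix five consecutive $C$-vertices $v_1,v_2,v_3,v_4,v_5$ with $v_3=v$ (and, when possible, $\{v_1,\ldots,v_5\}\cap T_{petal}=\emptyset$), and then apply Lemma~\ref{lem:smallsunflower} to a suitable subpath of $H$ starting with the edge $dv_3$. By Lemma~\ref{lem:dominating} the unique $D$-vertex of $H$ is $d$; by Lemma~\ref{lem:neighborofdominating}, $v$ is adjacent to $d$ on $H$, so one may write $H-d$ as a path $u_1u_2\cdots u_L$ with $u_1=v_3$. By Lemma~\ref{lem:connectedsupport}, $\spp(H-d)$ is a connected subpath of $C$ containing $v_3$. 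If $\spp(H-d)\subseteq\{v_2,v_3,v_4\}$, then $V(H)\setminus\{d\}\subseteq Z_{\{v_2,v_3,v_4\}}$ and we are done with $H'=H$ and $\{x,y,z\}=\{v_2,v_3,v_4\}$.

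Otherwise some $u_j$ lies in $Z_{v_1}\cup Z_{v_5}$; let $j^{*}\ge 2$ be the smallest such index. By minimality and the connectedness of cumulative supports, $\spp(\{u_1,\ldots,u_{j^{*}-1}\})\subseteq\{v_2,v_3,v_4\}$, so each interior vertex of the path $P:=d,v_3,u_2,\ldots,u_{j^{*}}$ lies in $Z_{\{v_2,v_3,v_4\}}\setminus Z_{\{v_1,v_5\}}$. Provided $u_{j^{*}}\notin\{v_1,v_5\}$---which is automatic whenever $u_{j^{*}}\notin V(C)$, and by Lemma~\ref{lem:neighborofdominating} fails only when $u_{j^{*}}$ is the second $V(C)$-vertex of $H$---the hypotheses of Lemma~\ref{lem:smallsunflower} are met: $G[V(P)]$ is an induced subpath of $H$ if $j^{*}<L$, and the induced cycle $H$ itself if $j^{*}=L$ (with $m=L+1\ge 4$ by the shortest-hole assumption). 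Lemma~\ref{lem:smallsunflower} then produces $H'$ containing $d$ and $v_3$ with $V(H')\setminus\{d\}\subseteq Z_{\{v_1,v_2,v_3\}}$ or $Z_{\{v_3,v_4,v_5\}}$; the corresponding triple meets $V(H)$ at $v_3$.

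The delicate subcase is $u_{j^{*}}\in\{v_1,v_5\}$; Lemma~\ref{lem:neighborofdominating} then forces $j^{*}=L$ and $V(H)\cap V(C)=\{v_3,u_L\}$, and by symmetry I take $u_L=v_5$. I repeat the construction in the window $v_3,v_4,v_5,v_6,v_7$ centred at $v_5$, traversing $H-d$ backwards from $u_L$. Either this second invocation succeeds and yields $H'$ inside $Z_{\{v_3,v_4,v_5\}}$ or $Z_{\{v_5,v_6,v_7\}}$ (each meeting $V(H)$ at $v_5$); or every interior $u_j$ satisfies both $\spp(u_j)\subseteq\{v_2,v_3,v_4\}$ and $\spp(u_j)\subseteq\{v_4,v_5,v_6\}$, so by the three-consecutive-neighbours bound of Lemma~\ref{lem:consecutive} (and the non-emptiness of $\spp(u_j)$ for $u_j\in N(C)$) one concludes $\spp(u_j)=\{v_4\}$ and hence $u_j\in Z_{v_4}$. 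Consequently $V(H)\setminus\{d\}\subseteq\{v_3,v_5\}\cup Z_{v_4}\subseteq Z_{\{v_3,v_4,v_5\}}$, and I take $H'=H$ with $\{x,y,z\}=\{v_3,v_4,v_5\}$.

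The main technical obstacle is this degenerate subcase, where the two-sided support analysis together with Lemma~\ref{lem:consecutive} pins every interior $u_j$ into $Z_{v_4}$; the key insight is that the interval intersection $\{v_2,v_3,v_4\}\cap\{v_4,v_5,v_6\}=\{v_4\}$ collapses the support. Verifying clause~(v) of Lemma~\ref{lem:smallsunflower} is immediate from the shortest-hole bound $\abs{V(C)}>\mu_k$, and arranging the five-vertex window to avoid $T_{petal}$ is routine since $\abs{T_{petal}}\le 19k\ll\abs{V(C)}$.
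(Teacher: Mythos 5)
Your proof is essentially the same as the paper's: both reduce to walking along $H-d$ from $a=v_3$, stopping at the first vertex $w=u_{j^*}$ lying in $Z_{\{v_1,v_5\}}$, and feeding the subpath $d,a,\ldots,w$ into Lemma~\ref{lem:smallsunflower}. The one substantive difference is that you treat the possibility $u_{j^*}\in\{v_1,v_5\}$ as a genuine ``delicate subcase'' requiring a second, backwards invocation plus a further sub-sub-case --- whereas the paper dismisses it in one sentence. Indeed, if $u_{j^*}=v_1$ or $v_5$, then $u_{j^*-1}$ is adjacent to $u_{j^*}\in V(C)$, is not in $V(C)$ (the only $V(C)$-vertices of $H$ are $v_3=u_1$ and $u_{j^*}=u_L$, and $2\le j^*-1\le L-1$), and is not in $D$; hence $u_{j^*-1}\in Z_{u_{j^*}}\subseteq Z_{\{v_1,v_5\}}$, contradicting the minimality of $j^*$. (The case $j^*=2$ is likewise impossible since $u_2\notin V(C)$.) So your entire delicate subcase, as well as the final collapse-to-$Z_{v_4}$ scenario, is vacuous. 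Your conclusions there are vacuously true, so the proof is not wrong, but what you flag as ``the main technical obstacle'' does not actually arise, and recognizing this would shorten the argument considerably.

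Two small inaccuracies that do not affect correctness: clause (v) of Lemma~\ref{lem:smallsunflower} holds simply because $H$ is a hole and hence has length at least $4$, not because $\abs{V(C)}>\mu_k$; and the five-vertex window around $a$ is forced (not freely arrangeable), so the $T_{petal}$-avoidance one actually needs is only $v_3=a\notin T_{petal}$, which follows from $V(H)\cap T_{petal}=\emptyset$ rather than from any freedom in placing the window.
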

\begin{proof}
By Lemma~\ref{lem:neighborofdominating}, $H$ contains at most $2$ vertices of $C$, and every vertex in $V(H)\cap V(C)$ is adjacent to the  
(unique) vertex of $V(H)\cap D$.
Let $P$ be the connected component of $H-(V(C)\cup \{d\})$.
By Lemma~\ref{lem:dominating}, every hole contains at most one vertex of $D$, and it implies that 
$P$ contains no vertices of $D$.
Let $a\in V(H)\cap V(C)$.
If the support of $P$ is contained in $N_C[a]$, then we are done as $\abs{N_C[a]}\le 3$ and we can take $H'=H$ and $\{x,y, z\}=N_C[a]$.
We may assume that the support of $P$ contains a vertex in $C$ whose distance to $a$ in $C$ is $2$ by Lemma~\ref{lem:connectedsupport}.
Let $v_1, v_2, v_3, v_4, v_5$ be the consecutive vertices of $C$ where $a=v_3$.
This assumption implies that $P$ contains a vertex in either $Z_{v_1}$ or $Z_{v_5}$.

Let $w$ be the vertex of $Z_{\{v_1, v_5\}}\cap V(H)$ that is closest to $a$ in $P$.
Let $Q$ be the $(d,w)$-subpath of $H$ containing $a$.
We verify the preconditions of Lemma~\ref{lem:smallsunflower} with $(p_1, p_2, P)=(d, a, Q)$.
The first condition is clear.
Note that $H$ contains neither $v_2$ nor $v_4$; otherwise, $dav_2$ or $dav_4$ is a triangle in $H$, contradicting the assumption that $H$ is a hole.
Thus $Q$ contains neither $v_2$ nor $v_4$.
If $w$ is $v_1$ or $v_5$, then the neighbor of $w$ in $Q$ is also in $Z_{\{v_1, v_5\}}$, contradicting the choice of $w$.
Thus, $w\in Z_{\{v_1, v_5\}}\setminus \{v_1, v_5\}$.
Clearly, all internal vertices of $Q$ are in $Z_{\{v_2, v_3, v_4\}}\setminus Z_{\{v_1, v_5\}}$; otherwise by Lemma~\ref{lem:connectedsupport}, 
$Q$ must contain an internal vertex from $Z_{\{v_1, v_5\}}$, contradicting the choice of $w$.
The last two conditions are satisfied because $H$ is a hole.
Then $(d,a,Q)$ meets the preconditions of Lemma~\ref{lem:smallsunflower}. 

Therefore, there exists a $D$-traversing sunflower $H'$ containing $d$ and $a$ such that $V(H')\setminus \{d\}\subseteq  Z_{\{v_1, v_2, v_3\}}$ or $V(H')\setminus \{d\}\subseteq  Z_{\{v_3, v_4, v_5\}}$.
As $a=v_3\in V(H)$, we have $V(H)\cap \{v_1, v_2, v_3\}\neq\emptyset$ or $V(H)\cap \{v_3, v_4, v_5\}\neq\emptyset$, respectively.
\end{proof}

Based on Lemma~\ref{lem:small}, we prove the following.

\begin{PROP}\label{prop:skew}
There is a polynomial-time algorithm which finds either $k+1$ vertex-disjoint holes in $G$ or a vertex set $T_{trav:sunf}\subseteq (D\cup V(C))\setminus T_{petal}$ of size at most $15k+9$ such that $T_{petal}\cup T_{trav:sunf}$ hits all $D$-traversing sunflowers.
\end{PROP}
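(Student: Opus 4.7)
My strategy is a matching-vs-cover dichotomy on a bipartite auxiliary graph, enabled by Lemma~\ref{lem:small}. That lemma shows every $D$-traversing sunflower $H$ in $G-T_{petal}$ carries a compact witness: a $D$-traversing hole $H'$ contained in $G[\{d\}\cup Z_{\{x,y,z\}}]$ for some $d\in D$ and three consecutive vertices $x,y,z$ of $C$ with $V(H)\cap\{x,y,z\}\ne\emptyset$. I therefore build a bipartite graph $B$ whose two parts are $D$ and the collection $\mathcal{T}$ of all triples of three consecutive vertices of $V(C)\setminus T_{petal}$, joining $d\in D$ to $T\in\mathcal{T}$ by an edge precisely when $G[\{d\}\cup Z_T]$ contains a $D$-traversing hole. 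Each such test runs in polynomial time by Lemma~\ref{lem:detectinghole}, and Theorem~\ref{thm:menger} lets me compute in polynomial time a maximum matching $M$ and a minimum vertex cover $U$ of $B$ with $|M|=|U|$.

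If $|M|\ge 5k+4$, I sort the triples used in $M$ by the index on $C$ of their leftmost vertex and greedily pick those whose leftmost indices are pairwise at distance at least $5$; each pick excludes at most four later triples, so the greedy returns $\lceil(5k+4)/5\rceil=k+1$ triples $T_1,\ldots,T_{k+1}$ matched in $M$ to pairwise distinct $d_1,\ldots,d_{k+1}\in D$. By Lemma~\ref{lem:consecutive} a non-dominating vertex has at most three consecutive neighbors on $C$, so left-endpoint distance $\ge 5$ forces $Z_{T_i}\cap Z_{T_j}=\emptyset$; and since $Z$-sets exclude $D$, also $d_j\notin Z_{T_i}$. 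Hence the $D$-traversing holes certifying the edges $(d_i,T_i)\in E(B)$ are pairwise vertex-disjoint and furnish the required packing of $k+1$ holes. If instead $|M|\le 5k+3$, I take $T_{trav:sunf}$ to be the union of all $G$-vertices appearing in some element of $U$: a dominating vertex contributes itself and a triple contributes its three $C$-vertices, so $|T_{trav:sunf}|\le 3|U|\le 15k+9$ and $T_{trav:sunf}\subseteq(D\cup V(C))\setminus T_{petal}$. For any $D$-traversing sunflower $H$ in $G-T_{petal}$, Lemma~\ref{lem:small} supplies $(d,\{x,y,z\})$ with $\{x,y,z\}\cap V(H)\ne\emptyset$ and a $D$-traversing hole in $G[\{d\}\cup Z_{\{x,y,z\}}]$; this pair is an edge of $B$, hence covered by $U$, forcing either $d$ or some $v\in\{x,y,z\}\cap V(H)$ into $T_{trav:sunf}$.

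The delicate point I expect to need care on is guaranteeing that the triple $\{x,y,z\}$ delivered by Lemma~\ref{lem:small} actually belongs to $\mathcal{T}$, i.e.\ is entirely disjoint from $T_{petal}$. Its central vertex equals some $a\in V(H)$ and thus lies outside $T_{petal}$, but the two outer vertices could a priori fall inside; because Lemma~\ref{lem:smallsunflower} (the engine underlying Lemma~\ref{lem:small}) already demands five consecutive $T_{petal}$-free vertices around $a$, the situation is essentially forced, but if necessary one enlarges $T_{petal}$ by a constant-sized buffer along $C$ at the cost of absorbing a constant factor into the final bound. Once this is in place the matching-vs-cover dichotomy yields the stated polynomial-time algorithm.
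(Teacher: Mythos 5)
Your proposal uses the same basic strategy as the paper: invoke Lemma~\ref{lem:small} to give each $D$-traversing sunflower a compact witness $(d,\{x,y,z\})$, build a bipartite auxiliary graph between $D$ and consecutive triples, and run a matching/vertex-cover dichotomy. The difference is packaging: the paper builds \emph{five} bipartite graphs $\mathcal G_0,\dots,\mathcal G_4$, one for each residue class mod $5$, so that any two triples used in a single $\mathcal G_i$ are automatically at $C$-distance $\ge 3$ and hence have disjoint $Z$-sets -- the disjointness of the certifying holes is built into the design. You instead build one bipartite graph and then try to greedily extract $k+1$ far-apart triples from a matching of size $\ge 5k+4$, which is where your argument has a real gap.

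The gap is the cyclic wrap-around. You sort the matched triples by leftmost index and greedily pick with gaps $\ge 5$, asserting that ``each pick excludes at most four later triples.'' That statement is a \emph{linear} statement: the resulting picks $a_1<a_2<\cdots<a_{k+1}$ satisfy $a_{j}-a_{i}\ge 5$ for $i<j$, but the \emph{cyclic} distance between $a_{k+1}$ and $a_1$ going the other way around $C$ could be less than $5$, in which case $Z_{T_1}\cap Z_{T_{k+1}}\ne\emptyset$ and you do not actually obtain $k+1$ pairwise vertex-disjoint holes. Dropping the offending pick leaves only $k$, and raising the matching threshold to compensate breaks the $15k+9$ bound. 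The fix is easy once you see it -- since $|V(C)|>\mu_k$ is far larger than $5(5k+4)$, some gap between consecutive matched indices exceeds $5$, and starting the linear greedy just after such a gap removes the wrap-around conflict -- but this step is not in your write-up and needs to be. (The paper's $5$-offset families avoid the issue entirely, at the cost of a small leftover set of at most $9$ vertices near the ``seam'' $v_{5\lfloor m/5\rfloor}$.)

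Your second flagged concern, restricting $\mathcal T$ to triples disjoint from $T_{petal}$, is a self-inflicted complication rather than a necessary one, and the fix you gesture at (inflating $T_{petal}$) would also disturb the size bookkeeping. The cleaner move, which is what the paper implicitly does, is to allow the triple side of the bipartite graph to include any three consecutive vertices of $C$. If the resulting cover set happens to intersect $T_{petal}$, simply discard that overlap: the union $T_{petal}\cup T_{trav:sunf}$ is unchanged, the stated inclusion $T_{trav:sunf}\subseteq(D\cup V(C))\setminus T_{petal}$ is restored for free, and the size bound is only helped. With these two repairs your argument becomes correct and is essentially a mild repackaging of the paper's.
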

\begin{proof}
Let $C=v_0v_1 \cdots v_{m-1}v_0$. All additions are taken modulo $m$. 
We create an auxiliary bipartite graph $\mathcal{G}_i=(D\uplus \mathcal{A}_i, \mathcal{E}_i)$ for each $0\leq i \leq 4$, such that
\begin{itemize}
\item $\mathcal{A}_i=\{\{v_{5j+i},v_{5j+i+1},v_{5j+i+2}\}: j=0, 1, \ldots ,\lfloor \frac{m}{5} \rfloor -1\}$,
\item there is an edge between $d\in D$ and $\{x,y,z\}\in \mathcal{A}_i$ if and only if 
there is a hole $H$ containing $d$ such that $V(H)\setminus \{d\}\subseteq Z_{\{x,y,z\}}$ (thus, $V(H)\cap \{x,y,z\}\neq \emptyset$).
\end{itemize}
Clearly, the auxiliary graph $\mathcal{G}_i$ can be constructed in polynomial time using Lemma~\ref{lem:detectinghole}. 

Now, we apply Theorem~\ref{thm:menger} to each $\mathcal{G}_i$ and output either a matching of size $k+1$ or a vertex cover of size at most $k$.

Suppose that there exists $i\in \{0,1,\ldots, 4\}$ such that $\mathcal{G}_i$ contains a matching $M$ of size at least $k+1$. 
We argue that there are $k+1$ vertex-disjoint holes in this case. Let $e=(d,\{x,y,z\})$ and $e'=(d',\{x',y',z'\})$ be two distinct edges of $M$. 
By construction, 
there exist two holes $H$ and $H'$ such that 
\begin{itemize}
\item $H$ contains $d$ and $V(H)\setminus \{d\} \subseteq Z_{\{x,y,z\}}$, and
\item $H'$ contains $d'$ and $V(H')\setminus \{d'\} \subseteq Z_{\{x',y',z'\}}$.
\end{itemize}
Recall that any vertex of $N(C)\setminus D$ has at most three neighbors on $C$, which are consecutive by Lemma~\ref{lem:consecutive}. On the other hand, the distance between $\{x,y,z\}$ and $\{x',y',z'\}$ on $C$ is at least three by the construction of the family $\mathcal{A}_i$. Therefore the two sets $Z_{\{x,y,z\}}$ and $Z_{\{x',y',z'\}}$ are disjoint, 
which implies  
$H$ and $H'$ are vertex-disjoint. We conclude that one can output $k+1$ vertex-disjoint holes when there is a matching $M$ of size $k+1$ in one of $\mathcal{G}_i$'s.

Consider the case when for every $0\le i\leq 4$, $\mathcal{G}_i$ admits a vertex cover $S_i$ of size at most $k$. For $S_i$, let $S^*_i$ be the vertex set 
\[ (S_i\cap D) \cup \bigcup_{\{x,y,z\}\in S_i\cap \mathcal{A}_i} \{x,y,z\} \]
and let $T_{trav:sunf}:=\left( \bigcup_{i=0}^4 S^*_i \right) \cup \{v_{5\lfloor \frac{m}{5} \rfloor+i}:-2\le i\le 6\}$. Notice that $\abs{T_{trav:sunf}}\leq 15k+9$. 

\begin{CLAIM}
The vertex set $T_{petal}\cup T_{trav:sunf}$ hits all $D$-traversing sunflowers. 
\end{CLAIM}
\begin{proofofclaim}
Suppose $G-(T_{petal}\cup T_{trav:sunf})$ contains a $D$-traversing sunflower $H$ having a vertex $d\in D$. 
By Lemma~\ref{lem:small}, 
there exist $x,y,z$ that are consecutive vertices on $C$ and a $D$-traversing sunflower $H'$ containing $d$ such that $V(H)\cap \{x,y,z\}\neq \emptyset$ and $V(H')\setminus \{d\}\subseteq  Z_{\{x,y,z\}}$. Clearly, we have either 
\begin{itemize}
\item $d$ is adjacent to $\{x,y,z\}$ in one of the bipartite graphs $\mathcal{G}_i$, or
\item $\{x,y,z\}\subseteq \{v_{5\lfloor \frac{m}{5} \rfloor+i}:-2\le i\le 6\}$.
\end{itemize}
In the first case, $S^*_i$ contains $\{d\}$ or $\{x,y,z\}$, as $S_i$ is a vertex cover of $\mathcal{G}_i$. Since $V(H)\cap \{x,y,z\}\neq \emptyset$ and $H$ contains $d$, $S^*_i$ contains a vertex of $H$, which contradicts the assumption that $H$ is a $D$-traversing sunflower in $G-(T_{petal}\cup T_{trav:sunf})$. In the second case, 
$T_{trav:sunf}$ contains $\{x,y,z\}$, which again contradicts that $H$ is a $D$-traversing sunflower in $G-(T_{petal}\cup T_{trav:sunf})$.
\end{proofofclaim}

This completes the proof. 
\end{proof}

\section{Hitting all tulips}\label{sec:tulip}

In this section, 
we show that one can find in polynomial time either $k+1$ vertex-disjoint holes or a vertex set hitting all tulips.
Again, we assume that $(G,k,C)$ is given as an input such that $C$ is a shortest hole of $G$ of length 
strictly greater than $\mu_k$ and $G-V(C)$ is chordal.

The first few sections will focus on $D$-avoiding tulips. In Subsection~\ref{subsec:Dtulip}, we settle the case of $D$-traversing tulips.
Subsection~\ref{subsec:final} will establish the main theorem for holes in general, Theorem~\ref{thm:core}.
For $D$-avoiding tulips, it is sufficient to consider the graph $G_{deldom}=G-D$.  

\subsection{Constructing a nested structure of partial tulips}\label{subsec:tuliphive}

We recursively construct a subgraph of $G_{deldom}$ in which all vertices have degree $2$ or $3$ and it contains $C$. 
A subgraph of $G$ is called a \emph{$(2,3)$-subgraph} if its all vertices have degree $2$ or $3$.
For a $(2,3)$-subgraph $F$, a vertex $v$ of degree $3$ in $F$ is called a \emph{branching point} in $F$,
 and other vertices are called  \emph{non-branching points}. 

Given a $(2,3)$-subgraph $F$ of $G_{deldom}$ containing $C$, an $(x,y)$-path $P$ of $G_{deldom}$ is a \emph{$F$-extension} if it satisfies the following.
\begin{enumerate}[(i)]
\item $x$ and $y$ are distinct non-branching points of $F$.
\item $\{x,y\}\cap V(C)\neq \emptyset$.
\item $P$ is a proper $V(F)$-path and $P-V(F)$ is an induced path of $G_{deldom}$.
\item There exists a vertex $v\in V(P)$ such that $\dist_P(v,\{x,y\}\cap V(C))=2$ and $v\notin N[F]$. 
\end{enumerate} 
Note that by  condition (iv), the length of an $F$-extension is at least $4$.

A cycle $H$ of $G_{deldom}$ is an \emph{almost $F$-extension} 
if it satisfies the following. 
\begin{enumerate}[(i)]
\item[(i')] $\abs{V(H)\cap V(C)}=1$ and the vertex in $V(H)\cap V(C)$ is a non-branching point of $F$ in $C$. 
\item[(ii')]  $H-V(C)$ is an induced path of $G_{deldom}$ and contains no vertex of $F$.
\item[(iii')] There exists a vertex $v\in V(H)$ such that $\dist_H(v,V(H)\cap V(C))=2$ and $v\notin N[F]$. 
\end{enumerate} 
We call the vertex in $V(H)\cap V(C)$ the \emph{root} of the almost $F$-extension $H$. 

It is not difficult to see that given a $(2,3)$-subgraph $F$ containing $C$, 
there is a polynomial-time algorithm to find a shortest $F$-extension $P$ or correctly decides that there is no $F$-extension. 
For this, we exhaustively guess five vertices $x,y,x',y',v$ such that 
\begin{itemize}
\item $x$ and $y$ are non-branching points of $F$ such that $x\in V(C)$,
\item $x'$ and $y'$ are neighbors of $x$ and $y$ in $V(G_{deldom})\setminus V(F)$, respectively, 
\item $v$ is a neighbor of $x'$ in $V(G)\setminus N[F]$.
\end{itemize}
Since we are looking for an $(x,y)$-path $P$ where $\mathring{x}P\mathring{y}$ is induced, 
we check whether there is a path from $v$ to $y'$ in $G_{deldom}-((V(F)\cup N[x'])\setminus \{v\})$. 
If there is such a path, then we find a shortest one $Q$. Then $xx'v\odot vQy'\odot y'y$ is an $F$-extension. 
Among all possible choices of five vertices $x,y,x',y',v$, we find a shortest $F$-extension using these five vertices.
Clearly if there is an $F$-extension, then we can find a tuple of such five vertices that outputs a shortest $F$-extension in the above procedure.

Throughout this section, we heavily rely on the structure of a maximal subgraph obtained by adding a sequence of $F$-extensions exhaustively. We additionally impose a tie break rule for the choice of $F$-extensions.

\begin{description}
\item [Initialize] $W_1=C$, $B_1=\emptyset$, and $i=1$.
\item [At step $i$]  We perform the following.
\begin{enumerate}
\item Find a shortest $W_i$-extension $P_{i}$ such that
\begin{itemize}
\item[] (\textbf{tie break rule}) $\abs{V(P_i)\cap V(C)}$ is maximum.
\end{itemize}
If no $W_i$-extension exists, then  terminate. Let $x_i$, $y_i$ be the endpoints of $P_{i}$ otherwise. 
\item Set $W_{i+1}:= (V(W_i)\cup V(P_{i}), E(W_{i})\cup E(P_{i}))$. 
\item Set $B_{i+1}:=B_{i} \cup \{x_i,y_i\}$ and increase $i$ by one.
\end{enumerate}
\end{description}
We remark that even with the tie break rule, the choice of a $W_i$-extension in (1) is not unique.

Notice that every vertex of $W_i$ has degree 2 or 3. Let $W_1, W_2, \ldots , W_{\ell}$ be 
the sequence of subgraphs constructed exhaustively until there is no $W_{\ell}$-extension. Let $W=W_{\ell}$  and $T_{branch}=B_{\ell}$. 
Throughout this section, we fix those sequences $W_1, W_2, \ldots , W_{\ell}=W$ and $P_1, P_2, \ldots, P_{\ell-1}$, and $B_1, B_2, \ldots, B_{\ell}=T_{branch}$. 
Clearly, the construction of $W$ requires at most $n$ iterations, and thus we can construct these sequences in polynomial time.

The first observation is that if $T_{branch}$ has size at least $s_{k+1}$, then $G[V(W)]$ contains $k+1$ vertex-disjoint holes. 
In fact, the construction of $W$ is calibrated so that every cycle of $W$ contains a hole of $G$. 
For this, the condition (iv) of $W$-extension is crucial. Due to the next lemma, we may assume that $\abs{T_{branch}}< s_{k+1}$.

\begin{LEM}\label{lem:manybranching}
If $W$ has at least $s_{k+1}$ branching points, then there are $k+1$ vertex-disjoint holes and they can be detected in polynomial time. 
\end{LEM}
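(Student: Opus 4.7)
The plan is to produce $k+1$ vertex-disjoint cycles in $W$ via Simonovits' theorem (Theorem~\ref{thm:simonovitz}) and then convert each into a vertex-disjoint hole of $G$. The conversion rests on the claim that every cycle $Z$ in $W$ contains a hole of $G$ in $G[V(Z)]$, which I intend to prove by induction on the largest index $i$ such that $Z$ uses an edge of $P_i$. When $i=0$ this forces $Z=C$, which is already a hole. Otherwise, the maximality of $i$ places $Z$ inside $W_{i+1}$; since every internal vertex of $P_i$ has degree exactly two in $W_{i+1}$, the cycle $Z$ must traverse $P_i$ in full, so $Z$ decomposes as $P_i$ together with a complementary $(x_i,y_i)$-path $R$ lying in $W_i$.

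Extracting a hole from this decomposition would be handled by the vertex $v\in V(P_i)$ supplied by extension condition~(iv): $v$ lies on $P_i$ at $P_i$-distance two from an endpoint in $C$ and satisfies $v\notin N[W_i]$, hence $v$ has no neighbor in $V(R)\subseteq V(W_i)$. The two $P_i$-neighbors $u_1,u_2$ of $v$ lie in the induced subpath $\mathring{x}_iP_i\mathring{y}_i$ supplied by condition~(iii) at $P_i$-distance two, so they are non-adjacent in $G$. Following the proof of Lemma~\ref{lem:twopaths}, a shortest $(u_1,u_2)$-path in $G[V(P_i)\cup V(R)]-v$ has length at least two and, together with the path $u_1vu_2$, produces the required hole inside $V(Z)$.

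With the claim in place, I would suppress the maximal paths of degree-two vertices in $W$ to form a cubic multigraph $W^\star$ on the vertex set $T_{branch}$, so that $|V(W^\star)|\ge s_{k+1}$. Each loop or parallel pair of edges in $W^\star$ lifts directly to a cycle in $W$ and hence to a hole via the claim, while consuming only a constant number of branching points; since $s_{k+1}$ grows superlinearly, after peeling such short cycles the remaining simple cubic subgraph still has enough vertices to apply Theorem~\ref{thm:simonovitz}, producing the required number of vertex-disjoint cycles in $W$ in polynomial time. Lifting each to $W$ and replacing it by its guaranteed hole yields the desired packing. The main obstacle is the inductive step of the claim: although $P_i$ need not be induced in $G$ as a whole, condition~(iii) does make $\mathring{x}_iP_i\mathring{y}_i$ induced, and because $u_1,u_2$ sit inside this subpath the non-adjacency $u_1u_2\notin E(G)$ required by the Lemma~\ref{lem:twopaths} argument still holds.
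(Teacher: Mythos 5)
Your proposal is correct and takes essentially the same approach as the paper's proof: apply Simonovits' theorem to the suppressed cubic (multi)graph of $W$ to obtain $k+1$ vertex-disjoint cycles, then show that each such cycle $Z$ of $W$ induces a hole of $G$ by locating the highest-index extension $P_i$ that $Z$ traverses in full, taking the vertex $v\notin N[W_i]$ guaranteed by condition~(iv), and invoking Lemma~\ref{lem:twopaths}. The only noticeable departures are presentational: you justify $P_i\subseteq Z$ via the degree-two observation on internal vertices of $P_i$ in $W_{i+1}$ (the paper reaches the same conclusion via a contradiction with the minimality of $i$), and you are more explicit than the paper about loops and parallel edges arising when degree-two vertices are suppressed before applying Theorem~\ref{thm:simonovitz}.
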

\begin{proof}
By Theorem~\ref{thm:simonovitz}, $\abs{T_{branch}}\geq s_{k+1}$ implies that $W$ has at least $k+1$ vertex-disjoint cycles, and such a collection of cycles can be found in polynomial time.
 We shall prove that for each cycle $H$ of $W$, there is a hole in the subgraph of $G$ induced by $V(H)$. Clearly, this immediately establishes the statement.
 We fix a cycle $H$ of $W$. We may assume that $H\neq C$.
 Recall that for each $i$, $P_i$ is a $W_i$-extension added to $W_i$.

Let $i$ be the minimum integer such that $E(H)\subseteq E(W_{i+1})$. We claim that $P_{i}$ is entirely contained in $H$ as a subgraph. 
Notice that for any  $W_{j}$-extension $P_j$, every branching point $v\in T_{branch}$ which is an internal vertex of $P_j$ has been added at iteration $j'>j$. 
Therefore, if $P_{i}$ is not entirely contained in $H$ as a subgraph, then for some $i'>i$, there exists a subpath of $P_{i'}$ 
such that $E(P_{i'})\cap E(H)\neq \emptyset$, contradicting the choice of $i$. 

Let $x,y$ be the endpoints of $P_i$. 
Let $v$ be an internal vertex of $P_{i}$ that is not contained in $N[W_i]$.
Such a vertex exists by the condition (iv) of the definition of a $W$-extension. 
Let $Q:=H-v$. Since  $\mathring{x}P_i\mathring{y}$ is induced, the neighbors of $v$ in $P_i$ are not adjacent, and $v$ has no neighbors in the set of internal vertices of $Q$. 
Therefore, by Lemma~\ref{lem:twopaths}, $G[V(H)]$ contains a hole, as claimed.
\end{proof}

In the next step, we exhaustively find almost $W$-extensions and cover them if there are no $k+1$ vertex-disjoint holes. We show that if there are two almost $W$-extensions with roots $x_1$ and $x_2$ and $\dist_C(x_1, x_2)\ge 5$, then these two almost $W$-extensions do not intersect. This is because if they meet, then we can obtain a $W$-extension, contradicting the maximality of $W$. Using this, we can deduce that if there are $5k+5$ almost $W$-extensions with distinct roots, then there are $k+1$ vertex-disjoint holes. 
\begin{PROP}\label{prop:almostpacking}
There is a polynomial-time algorithm that
finds either $k+1$ vertex-disjoint holes or a vertex set $T_{almost}\subseteq V(C)\setminus T_{branch}$ of size at most $5k+4$ such that $T_{almost}\cup T_{branch}$ hits all almost $W$-extensions.  
\end{PROP}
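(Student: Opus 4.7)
The plan is to enumerate all roots of almost $W$-extensions and, via a cyclic pigeonhole, either exhibit $k+1$ pairwise vertex-disjoint ones---yielding $k+1$ vertex-disjoint holes---or take the set of roots itself as $T_{almost}$. Three ingredients are needed: every almost $W$-extension contains a hole inside its induced subgraph; the set of roots can be computed in polynomial time; and two almost $W$-extensions whose roots lie at $C$-distance at least $5$ must be vertex-disjoint.

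\textbf{Each almost $W$-extension contains a hole.} For an almost $W$-extension $H$ with root $r$, I will use the witness vertex $u$ at $H$-distance $2$ from $r$ with $u\notin N[W]$ granted by (iii$'$). Writing the length-$2$ subpath as $Q=r\,r^+\,u$, note that $u\notin N[W]\supseteq N[r]$ forces $ru\notin E(G)$, so $Q$ is induced; and by (ii$'$) the internal vertex $r^+$ of $Q$ has no neighbor on the complementary $(r,u)$-subpath $P$ of $H$ outside $\{r,u\}$. Lemma~\ref{lem:twopaths} then produces a hole inside $G[V(H)]$.

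\textbf{Enumerating roots.} Mirroring the algorithm sketched for $W$-extensions, I will enumerate tuples $(r,r^-,r^+,u)$ where $r\in V(C)\setminus T_{branch}$, $r^-,r^+\in V(G_{deldom})\setminus V(W)$ are neighbors of $r$, and $u\in N_G(r^+)\setminus N[W]$; then test for a $(u,r^-)$-path in $G_{deldom}-V(C)-((V(W)\cup N[r^+])\setminus\{u\})$. Collecting every $r$ for which some choice succeeds produces the set $R$ of roots of almost $W$-extensions in polynomial time, together with an explicit almost $W$-extension for each $r\in R$.

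\textbf{Distance-$5$ disjointness.} This is the main obstacle. The claim is that if $H_1,H_2$ are almost $W$-extensions with distinct roots $r_1,r_2$ and $\dist_C(r_1,r_2)\ge 5$, they are vertex-disjoint. Any shared vertex $w$ lies outside $V(C)\cup V(W)$ by (i$'$) and (ii$'$). Using $H_1$'s distance-$2$ witness $u_1$, one can initiate a candidate $W$-extension by $r_1\,r_1^+\,u_1$ and then take a shortest $(u_1,\{r_2^-,r_2^+\})$-path in $G_{deldom}-V(C)-((V(W)\cup N[r_1^+])\setminus\{u_1\})$; the shared vertex $w$ and the connectivity of $H_1-\{r_1,r_1^+\}$ and $H_2-\{r_2\}$ ensure that such a path exists. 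The resulting $(r_1,r_2)$-path is a $W$-extension: conditions (i)--(iii) hold by the set-up (the exclusion of $N[r_1^+]\setminus\{u_1\}$ from the search kills chords off of $r_1^+$ and guarantees inducedness after removing endpoints), and (iv) is witnessed by $u_1$. This contradicts the exhaustive termination of the construction of $W$. The technical care lies exactly in checking (iii) and (iv); the hardest step will be ruling out the degenerate chords (for instance, the edge $r_1^+r_2^\pm$ if present), which one disposes of by re-shortening the path and reusing $u_1$ as the witness.

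\textbf{The cyclic pigeonhole.} Finally, sort $R$ cyclically around $C$ as $r_1,\ldots,r_{|R|}$. If $|R|\ge 5k+5$, the sub-sequence $r_1,r_6,r_{11},\ldots,r_{5k+1}$ yields $k+1$ roots whose pairwise $C$-distance is at least $\min(5(j-i),\,|R|-5(j-i))\ge 5$ for $0\le i<j\le k$, so by the disjointness claim the corresponding almost $W$-extensions are pairwise vertex-disjoint, and each contains a hole; this gives $k+1$ vertex-disjoint holes. Otherwise $|R|\le 5k+4$, and setting $T_{almost}:=R\subseteq V(C)\setminus T_{branch}$ hits every almost $W$-extension, since each such extension has its unique vertex of $V(C)$---namely its root---in $R$.
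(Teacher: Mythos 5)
Your overall plan matches the paper's: enumerate roots of almost $W$-extensions, prove that two of them with roots at $C$-distance at least $5$ must be vertex-disjoint, use a cyclic pigeonhole to extract $k+1$ pairwise-far roots whenever there are at least $5k+5$ distinct roots, and otherwise return the set of roots as $T_{almost}$. Your explicit verification that each almost $W$-extension yields a hole inside its induced subgraph (via Lemma~\ref{lem:twopaths}, using the witness of (iii$'$)) is correct and fills a step the paper leaves implicit, and the enumeration and pigeonhole steps are fine.

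The gap is in the distance-$5$ disjointness argument. You want to build a $W$-extension of the form $r_1\,r_1^+\,u_1\,\cdots\,r_2$ by taking a shortest $(u_1,\{r_2^-,r_2^+\})$-path in $G_{deldom}-V(C)-\bigl((V(W)\cup N[r_1^+])\setminus\{u_1\}\bigr)$, and you justify the existence of that path by the fact that $w$ links $H_1-\{r_1,r_1^+\}$ to $H_2-\{r_2\}$. This does not go through: $H_1-\{r_1,r_1^+\}$ does avoid the forbidden set (in particular $N[r_1^+]\setminus\{u_1\}$, since $u_1$ is the only neighbor of the endpoint $r_1^+$ on the induced path $H_1-\{r_1\}$), but $H_2-\{r_2\}$ may meet $N[r_1^+]$ arbitrarily, and in the extreme case $w=r_1^+$ the two paths $H_1-\{r_1,r_1^+\}$ and $H_2-\{r_2\}$ need not touch at all, so the shortest-path search fails. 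Insisting that $u_1$ certify condition~(iv) is also too rigid: if the two almost extensions first interact very close to $r_1$ (say at $r_1^+$ itself), the spliced path that does exist need not contain $u_1$ at all, and the correct witness is the vertex $q_2\notin N[W]$ provided by (iii$'$) for the \emph{second} almost extension. The paper's Claim~\ref{claim:distancealmost} sidesteps both problems by splicing $Y_p$ and $Y_q$ directly at the first vertex $p_{t_1}$ of $Y_p$ having a neighbor in $Y_q$, choosing the smallest-index such neighbor $q_{t_2}$: the resulting $V(C)$-path $R$ is induced off $V(W)$ by minimality of $t_1$ and $t_2$, has length at least $4$ by Lemma~\ref{lem:generalfarnonadj} since $\dist_C(r_1,r_2)\ge 5$, and therefore contains either $p_2$ or $q_2$, whichever of which is present serving as the (iv)-witness. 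Your remark about ``re-shortening and reusing $u_1$'' does not repair this, since re-shortening can evict $u_1$ from the path entirely.
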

\begin{proof}
Let $C=v_0v_1 \cdots v_{m-1}v_0$. All additions are taken modulo $m$. 
We greedily construct a collection of almost $W$-extensions $\mathcal{Y}=\{Y_1, Y_2, \ldots , Y_t\}$ (not necessarily vertex-disjoint) with distinct roots $v_{a_1}, v_{a_2}, \ldots, v_{a_t}\in V(C)\setminus T_{branch}$, and stop if $t$ reaches $5k+5$. 
To construct such a collection, we do the following for each vertex $v\in V(C)\setminus T_{branch}$:
\begin{enumerate}
\item Choose three vertices $w_1, w_2, w_3$ such that $w_1, w_3\in Z_v\setminus \{v\}$, $w_2\notin N[W]$, and $w_1$ is adjacent to $w_2$ but not adjacent to $w_3$.
\item Test whether there is a path from $w_2$ to $w_3$ in $G_{deldom}-((V(W)\cup N[w_1])\setminus \{w_2\})$.
If there is such a path $P$, then we add the cycle $H=w_1w_2\odot w_2Pw_3\odot w_3vw_1$ 
to $\mathcal{Y}$.
\end{enumerate}
It is not difficult to verify that there is an almost $W$-extension with root $v$ if and only if 
the algorithm outputs such a cycle $H$.

We claim that if $v_{a_p}$ and $v_{a_q}$ have distance at least $5$ in $C$, 
then $Y_p$ and $Y_q$ do not meet.

\begin{CLAIM}\label{claim:distancealmost}
Let $p,q\in \{1, 2, \ldots, t\}$. 
If $\dist_C(v_{a_p}, v_{a_q})\ge 5$, 
then $V(Y_p)\cap V(Y_q)=\emptyset$.
\end{CLAIM}
\begin{proofofclaim}
Suppose for contradiction that $Y_p$ and $Y_q$ meet at a vertex $z$.
Let $Y_p=p_1p_2 \cdots p_rv_{a_p}p_1$ and $Y_q=q_1q_2 \cdots q_sv_{a_q}q_1$.
For convenience let $p_0:=v_{a_p}$ and $q_0:=v_{a_q}$.
By the condition (iii') of an almost $W$-extension, 
we may assume that $p_2, q_2\notin N[W]$.
Let $t_1$ be the minimum integer such that $p_{t_1}$ has a neighbor in $Y_q$.
We choose a neighbor $q_{t_2}$ of $p_{t_1}$ in $Y_q$ with minimum $t_2$.
Let $R:=p_0Y_pp_{t_1}\odot p_{t_1}q_{t_2}\odot q_{t_2}Y_qq_0$.
It is not difficult to see that $R$ is an induced path.

Since $\dist_C(p_0, q_0)\ge 5$, the length of $R$ is at least $4$ by Lemma~\ref{lem:generalfarnonadj}. Therefore, 
$R$ contains either $p_2$ or $q_2$.
It implies that $R$ is a $W$-extension, contradicting the maximality of $W$.
\end{proofofclaim}

Suppose $t\geq 5k+5$. There exists $M=\{b_1, b_2, \ldots, b_{k+1}\}\subseteq \{a_1, a_2, \ldots, a_{t}\}\setminus \{v_i: m-4 \le i\le m-1\}$ 
such that for all $b_i, b_j\in M$, $b_i\equiv b_j\pmod 5$. 
As we exclude the vertices of $\{v_i: m-4 \le i\le m-1\}$, for every $b_i, b_j\in M$, $\dist_C(v_{b_i}, v_{b_j})\ge 5$.
By Claim~\ref{claim:distancealmost}, 
for $i, j\in \{1, 2, \ldots, k+1\}$, the corresponding almost $W$-extensions $Y_{b_{i}}$ and $Y_{b_{j}}$ are vertex-disjoint.
Thus, we can output $k+1$ vertex-disjoint holes in polynomial time. 
Otherwise, $\abs{\mathcal{Y}}\le 5k+4$, and thus the set of all roots $\{v_{a_1}, v_{a_2}, \ldots, v_{a_t}\} \subseteq V(C)\setminus T_{branch}$ 
of $\mathcal{Y}$ 
contain at most $5k+4$ vertices. Clearly, the set of all roots 
hits every element of $\mathcal{Y}$, that is, every almost $W$-extension.
\end{proof}

\subsection{$Q$-tunnels}\label{subsec:cfragment}

We define 
\begin{align*}
T_{ext}:=&T_{petal}\cup T_{full}\cup T_{trav:sunf}\cup T_{branch}\cup T_{almost}\cup\\
 &N_C^{20}[(T_{petal}\cup T_{full}\cup T_{trav:sunf}\cup T_{branch}\cup T_{almost})\cap V(C)].
\end{align*}
Note that 
\[ \abs{T_{ext}}\le 41(19k+(3k+14)+(15k+9)+(s_{k+1}-1)+(5k+4))\le 41(s_{k+1}+42k+26). \]
Since $\abs{T_{petal}\cup T_{full}\cup T_{trav:sunf} \cup T_{branch}\cup T_{almost}}\le s_{k+1}+42k+26$, $C-(T_{petal}\cup T_{full}\cup T_{trav:sunf} \cup T_{branch}\cup T_{almost})$ contains at most $s_{k+1}+42k+26$ connected components, so does $C-T_{ext}$.
Let $\mathcal{Q}$ be the set of connected components of $C-T_{ext}$ and we call each element of $\mathcal{Q}$ a \emph{$C$-fragment}.

 We want to show that for every $D$-avoiding tulip $H$ not hit by $T_{ext}$ and for every $Q\in \mathcal{Q}$, 
 if $H$ contains a vertex of $Q$ far from  the endpoints of $Q$, then $H$ must traverse the $Q$-tunnel from one entrance to the other entrance. 
 To argue this, we show that $H$ contains no edge $vw$ where $v\in Z_{V(Q)}$ and $w\notin N[C]$.
 We first need to show that in such a case, we have $w\notin N[W]$. 
  The next lemma states a useful distance property of $W$.
 See Figure~\ref{fig:distancelemma} for an illustration.

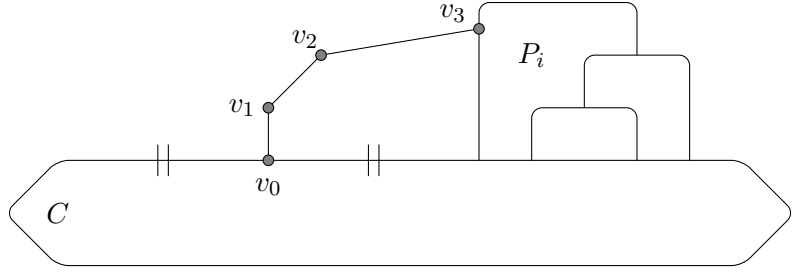
\begin{figure}
  \centering
  \begin{tikzpicture}[scale=0.7]
  \tikzstyle{w}=[circle,draw,fill=black!50,inner sep=0pt,minimum width=4pt]

   \draw[rounded corners] (6,0)--(11,0)--(12,-1)--(11,-2)--(-2,-2)--(-3,-1)--(-2,0)--(6,0);
   
   \draw[rounded corners] (7,0)--(7,1)--(9,1)--(9,0);
   \draw[rounded corners] (8,1)--(8,2)--(10,2)--(10,0);
   \draw[rounded corners] (6,0)--(6,3)--(9,3)--(9,2);
   
   \draw (-.1,0.3)--(-.1,-0.3);
   \draw (.1,0.3)--(.1,-0.3);
   \draw (4-.1,0.3)--(4-.1,-0.3);
   \draw (4.1,0.3)--(4.1,-0.3);
   
   \draw (2,0)--(2,1)--(3,2)--(6,2.5);
   \draw (2,0) node [w] {};
   \draw (2,1) node [w] {};
   \draw (3,2) node [w] {};
    \draw (6,2.5) node [w] {};
   
     \node at (7, 2) {$P_i$};

    \node at (2, -.5) {$v=v_0$};
    \node at (1.5, 1) {$v_1$};
    \node at (2.7, 2.3) {$v_2$};
   \node at (5, 2.8) {$u=v_3$};

     \node at (-2, -1) {$C$};

   \end{tikzpicture}     \caption{The setting in Lemma~\ref{lem:distance2} where $v$ is a vertex of $C-T_{ext}$ and there is a path of length $3$ from $v$ to $u\in V(W)\setminus V(C)$ whose internal vertices are in $V(G_{deldom})\setminus V(W)$. By Lemma~\ref{lem:distance2}, $v_2$ should have a neighbor in $C$. }\label{fig:distancelemma}
\end{figure}

\begin{LEM}\label{lem:distance2}
Let $Q\in \mathcal{Q}$ be a $C$-fragment, and 
$v\in V(Q)$ and $u\in V(W)$ with $v\neq u$. 
Then every $V(W)$-path $R=v_0v_1 \cdots v_{s}$ from $v_0=v$ to $v_s=u$ satisfies one of the following.
\begin{enumerate}[(1)]
\item $u$ is a vertex of $C$ such that $\dist_C(u, V(Q))\le 4$, 
\item $R$ has length $3$ and $v_2$ is adjacent to a vertex of $C$, 
\item $R$ has length at least 4.
\end{enumerate}
\end{LEM}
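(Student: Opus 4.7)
The plan is to argue by case analysis on $s$, the length of $R=v_0v_1\cdots v_s$.

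If $s=1$, then $u$ is a neighbor of $v$ in $W$. Since $V(Q)\cap T_{ext}=\emptyset$ and $T_{branch}\subseteq T_{ext}$, the vertex $v$ is non-branching in $W$, so its only two neighbors in $W$ are the neighbors of $v$ along $C$. Hence $u\in V(C)$ with $\dist_C(u,v)=1$, establishing (1).

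If $s=2$, write $R=vv_1u$ with $v_1\notin V(W)$. When $u\in V(C)$, the vertex $v_1\in N(C)\setminus D$ (we work in $G_{deldom}$) has both $v$ and $u$ as neighbors on $C$, so Lemma~\ref{lem:consecutive} gives $\dist_C(v,u)\le 2$ and (1) holds. When $u\notin V(C)$, take the smallest index $j$ such that $u$ is an internal vertex of $P_j$, and let $z\in\{x_j,y_j\}$ be the endpoint of $P_j$ closer to $u$ along $P_j$. My plan is to concatenate $R$ with the subpath $uP_jz$ to obtain an alternative $V(W_j)$-path from $v$ to $z$ and show that, after possibly minor adjustments, it constitutes a $W_j$-extension that is strictly shorter than $P_j$, or has equal length but more vertices of $V(C)$, contradicting either the shortness or the tie-break choice of $P_j$.

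If $s=3$, write $R=vv_1v_2u$. When $u\in V(C)$, the path $R$ is a $V(C)$-path in $G_{deldom}$ of length $3=1+2$, and Lemma~\ref{lem:generalfarnonadj} with $m=1$ gives $\dist_C(v,u)\le 3\le 4$, so (1) holds. When $u\notin V(C)$ and $v_2\in N(V(C))$, conclusion (2) is immediate. When $u\notin V(C)$ and $v_2\notin N[V(C)]$, I would again extend $R$ through the extension $P_j$ containing $u$ as in the previous case, this time using $v_2$ (which sits outside $N[C]$) as the candidate witness for condition (iv) of a $W_j$-extension. If $s\ge 4$, conclusion (3) is immediate.

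The main obstacle is the two subcases where $u\notin V(C)$, namely $s=2$ and $s=3$ with $v_2\notin N[V(C)]$. The challenge is to verify all four conditions of a $W_j$-extension for the concatenated path: in particular, that it is a proper induced $V(W_j)$-path and that condition (iv) is witnessed by a vertex lying outside $N[W_j]$, not merely outside $N[C]$. To rule out unwanted chords and stray neighbors from $v_1$ or $v_2$ to internal vertices of earlier extensions $P_i$ ($i<j$), I would exploit Lemma~\ref{lem:consecutive} together with the crucial fact that $V(Q)$ lies at distance greater than $20$ along $C$ from every endpoint of any previous extension on $C$ (since $N_C^{20}[T_{branch}\cap V(C)]\subseteq T_{ext}$), which localizes where $v_1,v_2$ can reach in $W_j$.
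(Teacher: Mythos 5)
Your framework — case analysis on the length $s$ of $R$, treating $u\in V(C)$ via the distance lemmas and $u\notin V(C)$ via a $W_j$-extension contradiction — is the same as the paper's, and the easy cases you settle ($s=1$, $s\ge 4$, and $u\in V(C)$ for $s\in\{2,3\}$) are handled correctly, if by slightly more elementary arguments than the paper uses. However, the two subcases you flag as obstacles are genuinely unresolved, and the difficulties you anticipate are real, so the proposal as it stands has a gap.

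Concretely, three ingredients are missing from your sketch of the hard cases. First, the paper does not merely concatenate $R$ with a subpath of $P_j$ through the given $u$; it replaces $u$ by the vertex $u_j$ of $P_i$ reachable from $v$ by a length-$s$ path with \emph{minimum index $j$ along $P_i$}, and it also takes $s$ to be minimum. Both minimizations are what make the concatenated path induced (they rule out neighbors from the $R$-side to $\mathring{u_0}P_i\mathring{u_j}$), and without them condition (iii) of a $W_i$-extension is not clear. Second, you propose to go from $u$ to "the endpoint $z\in\{x_j,y_j\}$ closer to $u$ along $P_j$"; the paper instead concatenates towards the $C$-endpoint $u_0$, precisely so that the second vertex $u_2$ of $P_i$ lies on the concatenated path — and condition (iv) of $P_i$ being a $W_i$-extension hands you $u_2\notin N[W_i]$ for free. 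If $z$ happens to be the non-$C$ endpoint, you have no obvious candidate for the (iv)-witness, which is exactly the issue you raise about needing a vertex outside $N[W_j]$ rather than only outside $N[C]$. Third, for $s=3$ with $v_2\notin N[C]$, the paper \emph{proves the claim by induction on $i$} precisely in order to upgrade "$v_2\notin N[C]$" to "$v_2\notin N[W_i]$": if $v_2$ had a neighbor $w$ in $V(W_i)\setminus V(C)$, the induction hypothesis applied to the length-$3$ $V(W)$-path $v v_1 v_2 w$ would force $v_2\in N(C)$, a contradiction. Your proposal has no such inductive mechanism, and the distance-to-$T_{branch}$ localization you mention, while necessary, does not by itself close this. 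With the minimum-$j$ choice, the concatenation to $u_0$, and the induction on $i$ added, your plan would line up with the paper's argument; without them it does not go through.
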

\begin{proof}
Recall that $W_1, W_2, \ldots , W_{\ell}=W$ is a sequence of subgraphs, $P_1, P_2, \ldots, P_{\ell-1}$ is a sequence of $W_i$-extensions, and $B_1, B_2, \ldots, B_{\ell}=T_{branch}$ is a sequence of branching points during the construction of $W$.

Suppose $u\in V(C)$. If a $V(W)$-path $R$ between $u$ and $v$ has length at most $3$, 
then there is an edge between $Z_u$ and $Z_v$ or we have $Z_u\cap Z_v\neq \emptyset$.
Then Lemma~\ref{lem:farnonadj} implies that $\dist_C(u,v)\le 3$, and $R$ satisfies (1).  
Therefore, we may assume $u\notin V(C)$. In particular, the following claim for every $i\in \{1, \ldots, \ell-1\}$  establishes the statement immediately. 
We prove it by induction on $i$: 
\begin{itemize}
\item[$(\ast)$] if $u$ is an internal vertex of $P_i$, then every $V(W)$-path $R$ between $v$ and $u$ satisfies (2) or (3).
\end{itemize}

Let $P_{i}=u_0u_1 \cdots u_p$. 
By definition of a $W_{i}$-extension, we may assume $u_0\in V(C)$ and $u_2\notin N[W_{i}]$.  
Suppose there exists a $V(W)$-path from $v$ to an internal vertex of $P_i$ violating  (3). 
Such a path has length at most $3$.
Let $s\in \{1,2,3\}$ be the minimum integer such that 
there is a $V(W)$-path of length $s$ between $v$ and an internal vertex of $P_i$. 
We choose the minimum integer $j\in \{1, 2, \ldots, p-1\}$ such that  
there is a $(v,u_j)$-path $R$ of length $s$.
Let $R:=v_0v_1 \cdots v_s$ with $v_0=v$ and $v_s=u_j$, and 
 $R_1=u_0P_iu_j\odot R$.  

We verify that $R_1$ is a $W_i$-extension. 
\begin{CLAIM}\label{claim:r1}
$R_1$ is a $W_i$-extension containing $u_2$. 
\end{CLAIM}
\begin{proofofclaim}
By the choice of $s$ and $u_j$, every vertex in $\mathring{v_0}R\mathring{u_j}$ has no neighbors in $\mathring{u_0}P_i\mathring{u_j}$. Therefore, $\mathring{u_0}R_1\mathring{v_0}$ is an induced path. Also, $v_0$ is a non-branching point of $W_i$. 
Hence, $R_1$ satisfies the conditions (i)-(iii) of $W_i$-extension. 
For (iv), it is sufficient to show that $j\ge 2$.
Suppose $j=1$. 
Then $R_1$ has length at most $4$, and by Lemma~\ref{lem:generalfarnonadj} with $m=2$, we have $\dist_C(v, u_0)\le 7$.
Since $u_0\in T_{branch}\cap V(C)$, this contradicts the fact that $v\in V(Q)$ and thus $\dist_C(v, T_{branch}\cap V(C))\ge 20$. 
We conclude that $j\ge 2$ and thus $R_1$ contains $u_2$. Since $P_i$ meets (iv) as a $W_i$-extension, we have $u_2\notin N[W_i]$.
Therefore, $R_1$ satisfies all four conditions for being a $W_i$-extension.
\end{proofofclaim}

Next, we show that $R$ has length exactly $3$. 
When $R$ has length $1$ or $2$, 
we derive a contradiction from the fact that $P_i$ is taken as a $i$-th $W_i$-extension.

\begin{CLAIM}\label{claim:length3}
$s=3$; that is, $R$ has length $3$.
\end{CLAIM}
\begin{proofofclaim}
First assume that $R$ has length $1$.
If $j<p-1$, then $R_1$ is shorter than $P_i$, which contradicts the fact that $P_i$ is taken as a shortest $W_i$-extension.
Thus we have $j=p-1$.
If $u_p$ is a vertex in $C$, observe that $u_{p-1}\in Z_{v_0}\cap Z_{u_p}$. Then, by Lemma~\ref{lem:farnonadj} we have $\dist_C(v_0,u_p)\leq3$. However, $u_p\in T_{branch}$ and $v_0\in V(Q)$ imply 
$\dist_C(v_0, u_p)\ge 20$, a contradiction. Hence, $u_p$ is not a vertex of $C$. This means that 
 $R_1$ should have been chosen as a $W_i$-extension instead of $P_i$ because of tie break rule, a contradiction.

Suppose now that $R$ has length $2$.
If $j<p-2$, then $R_1$ is shorter than $P_i$, which contradicts the fact that $P_i$ is taken as a shortest $W_i$-extension.
Thus we have $j=\{p-2, p-1\}$.
If $u_p$ is a vertex of $C$, then $\dist_C(v_0, u_p)\le 7$ by Lemma~\ref{lem:generalfarnonadj}.
This contradicts the fact that $\dist_C(v_0, u_p)\ge 20$.
Therefore, $u_p$ is not a vertex of $C$.
If $j=p-2$, then $R_1$ should be taken instead of $P_i$ because of tie break rule.

Hence, we may assume that $j=p-1$. Then $v_0v_1u_{p-1}u_p$ is a path of length $3$ from $v_0$ to $u_p\in V(W_i)$, 
and by induction hypothesis, $u_{p-1}$ has a neighbor in $C$.
Let $z$ be a neighbor of $u_{p-1}$ in $C$.

By Lemma~\ref{lem:generalfarnonadj}, we have $\dist_C(v_0, z)\le 3$. Therefore, 
\[z\notin T_{petal}\cup T_{branch}\cup T_{almost}\] since otherwise, $v_0$ is contained in the 20-neighborhood 
of $V(C)\cap (T_{petal}\cup T_{branch}\cup T_{almost})$ and thus, $v_0\in T_{ext}$. In particular, $z$ is a non-branching point of $W$.
We choose a neighbor $u_{j'}$ of $z$ such that $j'$ is minimum and let $R_2=zu_{j'}\odot u_{j'}P_iu_0$. 
Note that $j'\leq j=p-1$. 

It is easy to verify that $R_2$ is a $W_i$-extension similarly as in Claim~\ref{claim:r1}. 
Especially, $R_2$ meets condition (iv) because of $j'\geq 3$, which follows from the fact that 
$z$ has no neighbors in $\{u_1, u_2\}$ by  Lemma~\ref{lem:generalfarnonadj}.
If $j'<p-1$, then $R_2$ is shorter than $P_i$ contradicting the fact that $P_i$ is chosen as a shortest $W_i$-extension.
If $j'=p-1$, then since $u_p\notin V(C)$, $R_2$ should have been chosen instead of $P_i$ because of  tie break rule.
We conclude that $R$ cannot have length 2, which completes the proof of the claim.
\end{proofofclaim}

Now, we shall show that $v_2$ has a neighbor in $C$, thus $R$ satisfies (2). 
Suppose the contrary, and observe $v_2\notin N[C]$ because $R$ is a $V(W)$-path and $v_2\notin V(C)$.
If $v_2$ has a neighbor in $V(W_i)\setminus V(C)$, then by induction hypothesis, $v_2$ has a neighbor in $C$, a contradiction. 
Therefore, $v_2$ has no neighbors in $W_i$ and especially, $v_2\notin N[W_i]$.

If $j<p-3$, then $R_1$ is shorter than $P_i$, which contradicts that $P_i$ is taken as a shortest $W_i$-extension.
Thus we have $p-3\leq j \leq p-1$.
We choose a neighbor $u_{j'}$ of $v_2$ with maximum $j'$. 
By the choice of $j$ and from $v_2\notin N[W_i]$, 
we have $p-3\leq j\leq j' \leq p-1$.
Since $v_0$ or $v_1$ has no neighbors in $P_i$ by Claim~\ref{claim:length3}, 
 $v_0v_1v_2u_{j'}\odot u_{j'}P_iu_p$ is a $W_i$-extension in which  $v_2\notin N[W_i]$.

If $j\ge 4$, then  $v_0v_1v_2u_{j'}\odot u_{j'}P_iu_p$ is shorter than $P_i$, contradicting the fact that 
$P_i$ is chosen as a shortest $W_i$-extension.  If $j\le 3$, then $R_1$ has length at most $6$, and by Lemma~\ref{lem:generalfarnonadj}, we have $\dist_C(v_0, u_0)\le 15$, a contradiction to the fact that $\dist_C(v_0, u_0)\ge 20$.

Therefore, we conclude that $v_2$ is adjacent to a vertex of $C$. This proves the claim $(\ast)$, which completes the proof.
\end{proof}

The following is a simple, but important observation.
See Figure~\ref{fig:qtunnel} for an illustration.

\begin{LEM}\label{lem:tunnellemma1}
Let $Q\in \mathcal{Q}$ be a $C$-fragment, and let $H$ be a $D$-avoiding tulip in $G_{deldom}-T_{ext}$. 
Then $H$ contains no two adjacent vertices $v$ and $w$ such that $v$ is in the $Q$-tunnel and $w\in V(G_{deldom})\setminus N[C]$.
\end{LEM}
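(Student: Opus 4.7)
The plan is to argue by contradiction: suppose $H$ contains adjacent vertices $v$ and $w$ with $v$ in the $Q$-tunnel and $w \in V(G_{deldom}) \setminus N[C]$. Since $w$ has no neighbor on $C$, immediately $v \notin V(C)$, so $v \in Z_q$ with $v \neq q$ for some $q \in V(Q) \subseteq V(C) \setminus T_{ext}$.

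The first major step will be to show $w \notin N[W]$, and the workhorse is Lemma~\ref{lem:distance2} applied to short $V(W)$-paths emanating from $q$. If $v$ lay in $V(W)$, the single edge $qv$ would be a length-$1$ $V(W)$-path, but then none of conclusions~(1)--(3) of Lemma~\ref{lem:distance2} could hold (conclusion~(1) would require $v \in V(C)$, which fails, and the path is too short for (2) or (3)); hence $v \notin V(W)$. The same trick applied to the length-$2$ path $qvw$ rules out $w \in V(W)$, using $w \notin V(C)$. Finally, if $w$ had a neighbor $u \in V(W)$, then $qvwu$ would be a $V(W)$-path of length exactly $3$: conclusion~(1) would put $u \in V(C)$ adjacent to $w$, contradicting $w \notin N[C]$; conclusion~(2) would directly force $w$ to have a neighbor on $C$; and conclusion~(3) is ruled out by length. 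So $w$ has no neighbor in $V(W)$, and $w \notin N[W]$.

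The second step will be to trace around $H$ and exhibit either a $W$-extension (forbidden by maximality of $W$) or an almost $W$-extension (forbidden because $T_{almost} \subseteq T_{ext}$ meets every almost $W$-extension while $V(H) \cap T_{ext} = \emptyset$). Since $G - V(C)$ is chordal, $H$ must meet $V(C) \subseteq V(W)$ in some vertex other than $v, w$; enumerate $H$ cyclically as $v = h_0, w = h_1, h_2, \ldots, h_{m-1}$ and let $j \ge 2$ be the smallest index with $h_j \in V(W)$. The crucial observation is that $h_j$ is non-branching in $W$, since $h_j \in V(H)$ and $V(H) \cap T_{ext} = \emptyset \supseteq T_{branch}$; likewise $q$ is non-branching. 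The candidate structure is $R := q v w h_2 \cdots h_j$: its internal vertices avoid $V(W)$ by minimality of $j$, its interior is an induced path as a subpath of the hole $H$, and $w$ witnesses condition~(iv) because $\dist_R(w, q) = 2$ with $q \in V(C)$ and $w \notin N[W]$.

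In the generic case $q \neq h_j$, $R$ is a proper $V(W)$-path with non-branching endpoints satisfying all conditions of a $W$-extension, contradicting the maximality of $W$. In the boundary case $q = h_j$ we have $q \in V(H)$; since $H$ is induced and $v = h_0$ has only $h_1$ and $h_{m-1}$ as $H$-neighbors, necessarily $q = h_{m-1}$ and $j = m-1$, so $R$ coincides with $H$ itself. A direct verification then confirms that $H$ is an almost $W$-extension rooted at $q$: by minimality of $j$ together with $v, w \notin V(C)$ we have $V(H) \cap V(C) = \{q\}$; the path $H - V(C)$ is induced and free of $V(W)$; and $w$ witnesses (iii'). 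The most delicate step should be the bookkeeping in the first part, correctly identifying which conclusion of Lemma~\ref{lem:distance2} is excluded in each sub-application while tracking possible coincidences among $q, v, w, u$.
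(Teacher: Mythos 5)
Your proof is correct and follows essentially the same strategy as the paper's: first use Lemma~\ref{lem:distance2} on the short paths $qv$, $qvw$, and $qvwu$ to conclude $w\notin N[W]$, then trace around $H$ until it re-enters $W$ and exhibit either a $W$-extension or an almost $W$-extension, contradicting the maximality of $W$ or the fact that $T_{almost}\subseteq T_{ext}$ hits all almost $W$-extensions. The one small variation is in the terminal step: the paper takes the minimum index $i>2$ such that $v_i$ has a \emph{neighbor} that is a non-branching point of $W$ and attaches that external neighbor $z'$ as the second endpoint, whereas you run until the hole itself reaches a vertex $h_j\in V(W)$; both choices produce a valid extension. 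Two bookkeeping points worth noting explicitly in a clean write-up: since $w\notin N[W]$ forces $h_2\notin V(W)$, you automatically have $j\ge 3$, so the path has length $\ge 4$ as required; and when $h_j\in V(C)$ you must check $\dist_R(w,\{q,h_j\})=2$ rather than just $\dist_R(w,q)=2$, which again follows from $j\ge 3$ (the case $j=2$, $h_2\in V(C)$ is impossible since $w\notin N[C]$).
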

\begin{proof}
Suppose $H$ contains two adjacent vertices $v$ and $w$ such that $v\in Z_{V(Q)}$ and $w\in V(G_{deldom})\setminus N[C]$.
Since $v\in Z_{V(Q)}$ and $w\notin N[C]$, we have $v\notin V(C)$ and $v$ has a neighbor in $Q$. 
Let $z$ be a neighbor of $v$ in $Q$. 
We prove that $w$ has no neighbors in $W$.
\begin{CLAIM}\label{claim:notin}
$w\notin N[W]$.
\end{CLAIM}
\begin{proofofclaim}
By Lemma~\ref{lem:distance2}, $w\notin V(W)$.
Suppose $w$ has a neighbor in $W$. Since $w\notin N[C]$, $w$ has a neighbor in $V(W)\setminus   N[C]$. Let $u$ be such a neighbor. Since $zvwu$ is a path of length $3$, by Lemma~\ref{lem:distance2}, $w$ has a neighbor in $C$, a contradiction. Therefore, we conclude that $w$ has no neighbors in $W$. Thus, we have $w\notin N[W]$.
\end{proofofclaim}

Let $H=v_1v_2 \cdots v_mv_1$ where $v_1=v$ and $v_2=w$. 
Note that $H$ contains at least one non-branching point of $W$ since $(V(C)\setminus T_{ext})\cap V(H)\neq \emptyset$. We choose a minimum integer $i>2$ such that $v_i$ has a neighbor that is a non-branching point of $W$. Clearly $2<i\leq  m$ from the fact that $v_1$ is not in $C$ and by Lemma~\ref{lem:distance2}. We also observe that $v_2=w\notin N[W]$ by Claim~\ref{claim:notin} and that $v_1v_2 \cdots v_i$ is an induced path. 
Let $z'$ be a neighbor of $v_i$ which is a non-branching point of $W$. Then $zv_1v_2 \cdots v_iz'$ is a $W$-extension or an almost $W$-extension depending on whether $z=z'$ or not. It contradicts either the maximality of $W$ or that $T_{ext}$ hits all almost $W$-extensions. This completes the proof.
\end{proof}

Next, we prove that
if a $D$-avoiding tulip contains a vertex of a $C$-fragment $Q$ that is far from its endpoints $v$ and $w$, 
then its restriction on the $Q$-tunnel should be some path from $Z_v$ to $Z_w$. 
Since we will add all vertices of $C$-fragments having at most $35$ vertices to the deletion set for remaining $D$-avoiding tulips, we focus on $C$-fragments $Q$ with at least $36$ vertices.

\begin{LEM}\label{lem:tunnellemma3}
Let $Q=q_1q_2 \cdots q_m\in \mathcal{Q}$ be a $C$-fragment with at least $36$ vertices and let $R$ be the $Q$-tunnel.
Let $H$ be a $D$-avoiding tulip in $G_{deldom}-T_{ext}$ such that 
\begin{itemize}
\item $H$ contains no vertices in $\{q_i:1\le i\le 5, m-4\le i\le m\}$,
\item $H$ contains a vertex $v$ in $V(Q)$.
\end{itemize}
Then the connected component of the restriction of $H$ on $R$ containing $v$ is a path from $Z_{q_1}$ to $Z_{q_m}$.
\end{LEM}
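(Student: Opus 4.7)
The plan is to analyze each endpoint of $F$ in turn, pinning down its support in $V(Q)$, and then to rule out the degenerate case in which both endpoints live on the same side of the tunnel. Since $H$ is a tulip, $V(H)\not\subseteq N[C]\supseteq V(R)$, so $F\subsetneq H$ and $F$ is a proper induced subpath of the hole $H$; in particular $F$ is an induced path. I fix an endpoint $u$ of $F$ and let $w$ be its $H$-neighbor outside $V(F)$. Because $R$ is an induced subgraph of $G$, we have $w\notin V(R)$, and Lemma~\ref{lem:tunnellemma1} then gives $w\in N[C]$.

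The next step is to exclude the cases $u\in V(Q)$ and $w\in V(C)\setminus V(Q)$, using the $20$-neighborhood buffer built into $T_{ext}$. Since $q_0\in T_{ext}$ but $q_1\notin T_{ext}$, chasing the definition of $T_{ext}$ forces the existence of a vertex $v^*\in V(C)\cap(T_{petal}\cup T_{full}\cup T_{trav:sunf}\cup T_{branch}\cup T_{almost})$ at $C$-distance exactly $20$ from $q_0$ on the side opposite to $V(Q)$, and the $20$-ball of $v^*$ sweeps $q_{-1},q_{-2},q_{-3}$ (and symmetrically $q_{m+1},q_{m+2},q_{m+3}$) into $T_{ext}$. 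Thus $w\in V(C)\setminus V(Q)$, combined with Lemma~\ref{lem:consecutive} applied to $u$, forces $w\in\{q_{-1},q_0,q_{m+1},q_{m+2}\}\subseteq T_{ext}$, contradicting $V(H)\cap T_{ext}=\emptyset$; and $u\in V(Q)$ forces $w$ to be either in $Z_{V(Q)}=V(R)$ or in $T_{ext}$. Hence $u\in Z_{V(Q)}\setminus V(Q)$ and $w\in N(C)\setminus D$ with every $C$-neighbor of $w$ in $V(C)\setminus V(Q)$. Lemma~\ref{lem:farnonadj} applied to the edge $uw$ then gives $\dist_C(q_j,c)\le 3$ for every $q_j\in N(u)\cap V(Q)$ and every $c\in N(w)\cap V(C)$, which restricts the indices $j$ with $u\in Z_{q_j}$ to $\{1,2,3\}$ on the left (and symmetrically $\{m-2,m-1,m\}$ on the right).

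The third step is to strengthen this to $u\in Z_{q_1}$. If $u\notin Z_{q_1}$, the few possible support configurations for the $C$-neighborhoods of $u$ and $w$ allow me to build, in each case, an induced cycle of length $5$ or $6$ on $\{u,w\}\cup S$ for some $S\subseteq\{q_{-1},q_0,q_1,q_2,q_3\}$; the candidate chords are either chords of $C$ (forbidden since $C$ is induced) or violate the $C$-neighborhood constraints on $u$ and $w$ extracted in the previous step. Such a cycle is a hole strictly shorter than $|V(C)|>\mu_k$, contradicting $C$ being a shortest hole. I expect the bookkeeping of these few configurations to be the main---though routine---obstacle.

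Finally, I would rule out both endpoints $u_1,u_2$ of $F$ lying in $Z_{q_1}$ (the other extreme being symmetric). Writing $v=q_j$ with $6\le j\le m-5$, I split $F$ at $v$ into $F_1=u_1F\mathring{v}$ and $F_2=\mathring{v}Fu_2$. Since $F$ is induced, $F_1$ and $F_2$ are vertex-disjoint with no edges between $V(F_1)$ and $V(F_2)$ in $G$; both are induced paths in $G_{nbd}$ with supports properly contained in $V(C)$. Lemma~\ref{lem:connectedsupport} gives $\spp(F_1),\spp(F_2)\supseteq\{q_1,q_2,\ldots,q_{j-1}\}$, which contains the three consecutive vertices $q_1,q_2,q_3$ since $j-1\ge 5$. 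Lemma~\ref{lem:overlay} then produces a hole $H'$ with $V(H')\subseteq Z_{\{q_1,q_2,q_3\}}$, and Lemma~\ref{lem:consecutive} forces $\spp(H')\subseteq\{q_{-1},\ldots,q_5\}$, so $H'$ is a petal, and by the defining property of $T_{petal}$ we have $\spp(H')\subseteq T_{petal}$. But $\spp(H')$ necessarily meets $\{q_1,q_2,q_3\}\subseteq V(Q)\subseteq V(C)\setminus T_{ext}\subseteq V(C)\setminus T_{petal}$, a contradiction. Therefore the endpoints of $F$ lie in $Z_{q_1}$ and $Z_{q_m}$ respectively, yielding the claim.
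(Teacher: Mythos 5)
Your overall architecture is the same as the paper's: show that each endpoint of the component $F$ of $H$ restricted to $R$ lies in $Z_{q_1}\cup Z_{q_m}$, then rule out both endpoints lying on the same side by splitting at $v$ and applying Lemma~\ref{lem:overlay} to manufacture a petal inside $Z_{\{q_1,q_2,q_3\}}$, which contradicts $T_{petal}\subseteq T_{ext}$. Step~4 is essentially identical to the paper's second claim (the paper takes two $(Z_{q_1}\setminus\{q_1\},Z_{q_5}\setminus\{q_5\})$-subpaths of $P$ rather than $F_1,F_2$, but both work). The two places where you deviate are worth flagging.

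First, the $20$-neighborhood computation in step~2 (deducing $q_{-1},q_{-2},q_{-3}\in T_{ext}$ from $q_0\in T_{ext}$, $q_1\notin T_{ext}$) is correct but not needed here. The paper never invokes the $T_{ext}$ buffer in this lemma; it simply observes that the $H$-neighbor $w'$ of the endpoint sits in $Z_{V(\overline Q)}\setminus Z_{V(Q)}$ (since $w'\in N[C]\setminus D$ by Lemma~\ref{lem:tunnellemma1} and $w'\notin V(R)$ because $F$ is a maximal component of the restriction), and proceeds from there.

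Second, and more importantly, step~3 is left as ``routine bookkeeping'' rather than completed. The intended conclusion is correct, but the way you have set it up forces you to enumerate support configurations for $u$ and $w$ and verify each proposed $5$- or $6$-cycle is chordless, and one of the configurations (e.g.\ $\spp(\{u\})=\{q_2,q_3\}$, $w\in Z_{q_{-1}}$) needs the extra observation that $q_0\notin\spp(\{w\})$ before the $6$-cycle is induced. The paper avoids all of this: it picks $y\in\spp(\{w\})$ (their notation for your $u$) and $y'\in\spp(\{w'\})$ minimizing $\dist_C(y,y')$, notes $2\le\dist_C(y,y')\le 3$ by Lemma~\ref{lem:farnonadj}, and applies Lemma~\ref{lem:threeconsecutive} directly to conclude $u$ and $w$ are nonadjacent -- a one-line contradiction. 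Lemma~\ref{lem:threeconsecutive} is exactly the packaged version of the hand construction you sketch, so citing it would close the gap immediately; as written, step~3 is a genuine (though fixable) incompleteness.
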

\begin{proof}
Since $H$ is a tulip, $H$ is not fully contained in $R$.
Therefore, the component of the restriction of $H$ on $R$ containing $v$ is a path. 
Let $P$ be such a path. 

We claim that both endpoints of $P$ are contained in $Z_{\{q_1, q_m\}}$.
Suppose the contrary, and let $w$ be an endpoint of $P$ such that $w\in (\bigcup_{2\leq i\leq m-1} Z_{q_i})\setminus Z_{\{q_1, q_m\}}$. 
Let $\overline{Q}$ be the $(q_1, q_m)$-subpath of $C$ that does not contain $v_2$.

Let $w'\in N_H(w)\setminus V(P)$. Since $w$ is a vertex of $R$, Lemma~\ref{lem:tunnellemma1} implies that $w'\in N[C]$. 
Suppose $w'\in Z_{V(\overline{Q})}\setminus Z_{V(Q)}$.
We choose $y\in \spp(G[\{w\}])$ and $y'\in \spp(G[\{w'\}])$ so as to minimize $\dist_C(y,y')$. 
Then $\dist_C(y,y')\le 3$ by Lemma~\ref{lem:farnonadj}.
We also have $\dist_C(y,y')\ge 2$ because $y\in V(Q)\setminus \{q_1,q_m\}$ and $y'\in V(\overline{Q})$. 
Then Lemma~\ref{lem:threeconsecutive} implies that $w$ is not adjacent to $w'$, a contradiction.
Therefore, each endpoint of $P$ is contained in $Z_{\{q_1, q_m\}}$.

Now, we claim that the endpoints of $P$ are contained in distinct sets of $Z_{v_1}$ and $Z_{v_m}$.
Suppose for contradiction that both endpoints of $P$ are contained in the same set of $Z_{v_1}$ or $Z_{v_m}$. Without loss of generality, they are contained in $Z_{v_1}$.
Since $P$ contains no vertices in $\{q_i:1\le i\le 5\}$, 
$P$ contains two subpaths from $Z_{q_1}\setminus \{q_1\}$ to $Z_{q_5}\setminus \{q_5\}$.
But the supports of those two paths share three vertices $q_1, q_2, q_3$, and by Lemma~\ref{lem:overlay}, 
there is a petal contained in $Z_{\{q_1, q_2, q_3\}}$. This contradicts the assumption that $T_{petal}\subseteq T_{ext}$ contains the support of every petal.

This implies that one endpoint is in $Z_{q_1}$ and the other endpoint is in $Z_{q_m}$, as required. 
\end{proof}

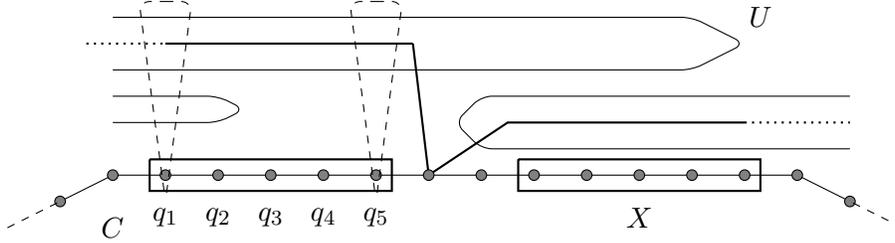
\begin{figure}
  \centering
  \begin{tikzpicture}[scale=0.7]
  \tikzstyle{w}=[circle,draw,fill=black!50,inner sep=0pt,minimum width=4pt]

	\draw[thick, dotted] (-2.5,2.5)--(-1,2.5);
	\draw[thick] (-1, 2.5)--(3.7,2.5);
	\draw[thick] (3.7,2.5)--(4,0)--(5.5,1)--(10,1);
	\draw[thick, dotted] (10, 1)--(12, 1);

   \draw (-2,0)--(11,0);
	\draw(-2, 0)--(-3,-0.5);
	\draw(11, 0)--(12,-0.5);
      \draw (-3,-.5) node [w] {};
       \draw (12,-.5) node [w] {};
 	\draw[dashed](13, -1)--(12,-0.5);
	\draw[dashed](-4,-1)--(-3,-0.5);
 
 \foreach \y in {-2,...,11}{
      \draw (\y,0) node [w] (a\y) {};
     }

     \node at (-2, -1) {$C$};
     \node at (10.3, 3) {$U_1$};
     \node at (-1, -.8) {$q_1$};
     \node at (0, -.8) {$q_2$};
     \node at (1, -.8) {$q_3$};
	\node at (2, -.8) {$q_4$};
	\node at (3, -.8) {$q_5$};
  
  	\draw[thick] (-1-.3,0.3)--(3+.3,0.3)--(3.3,-.3)--(-1.3,-.3)--(-1.3,0.3);
  	\draw[thick] (6-.3,0.3)--(10.3,0.3)--(10.3,-.3)--(5.7,-.3)--(5.7,0.3);
  
	  \node at (5, -1) {$P$};
         \path[->] (5, -.7) edge [bend right] (4.5, .3);

   \node at (8, -.8) {$X$};
    \draw[rounded corners] (12,1.5)--(5,1.5)--(4.5,1)--(5,.5)--(12,.5);
     \draw[rounded corners] (-2,3)--(9,3)--(10,2.5)--(9,2)--(-2,2);
     \draw[rounded corners] (-2,1+.5)--(0,1+.5)--(.5,0.75+.5)--(0,0.5+.5)--(-2,.5+.5);
    \foreach \y in {-1, 3}{
\draw[dashed, rounded corners] (\y, 3.3)--(\y+.5, 3.3)--(\y, -.5)--(\y-.5, 3.3)--(\y, 3.3);
}
 
   \end{tikzpicture}     
   \caption{An illustration of the set $X$ defined in Lemma~\ref{lem:tunnellemma2}. Since $X$ consists of the last $5$ vertices of the support of the component $U_1$, every path from $Z_{q_1}$ to $Z_{q_m}$ should move to a vertex of $Q$ appearing before $X$, and pass through another connected component of $R-(T_{ext}\cup V(Q))$ to reach $Z_{q_m}$.
   But this will lead to a petal whose support is near to $X$, contradicting the choice of the set $T_{ext}$. }\label{fig:distance2}
\end{figure}

Due to Lemma~\ref{lem:tunnellemma3}, we know that for any $D$-avoiding tulip $H$ in $G_{deldom}-T_{ext}$, 
there is a subpath $P$ of $H$ and a $Q$-tunnel $R$ such that $P$ is a path from one entrance of $R$ to the other entrance. 
The next  lemma describes how to find a hitting set for such path $P$ when its two endpoints belong to distinct connected components of $R-(T_{ext}\cup V(Q))$.
See Figure~\ref{fig:distance2} for an illustration.

\begin{LEM}\label{lem:tunnellemma2}
Let $Q=q_1q_2 \cdots q_m\in \mathcal{Q}$ be a $C$-fragment with at least $36$ vertices and let $R$ be the $Q$-tunnel.
One can find in polynomial time a vertex set $X\subseteq V(Q)\setminus \{q_i:1\le i\le 5, m-4\le i\le m\}$ of size at most $5$
hitting every path $P$ from $Z_{q_1}$ to $Z_{q_m}$ in $R-T_{ext}$ such that 
\begin{itemize}
\item $P$ contains no vertices in $\{q_i:1\le i\le 5, m-4\le i\le m\}$, 
\item the endpoints of $P$ are contained in distinct connected components of $R-(T_{ext}\cup V(Q))$.
\end{itemize} 
\end{LEM}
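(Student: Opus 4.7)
The plan is to let $X$ consist of at most the five rightmost support-vertices of a carefully chosen component of $R - (T_{ext} \cup V(Q))$, intersected with $\{q_6, \ldots, q_{m-5}\}$. The linchpin is an \emph{overlap lemma}: for any two distinct connected components $U \ne U'$ of $R - (T_{ext} \cup V(Q))$, $|\spp(U) \cap \spp(U')| \le 2$. Indeed, both supports are connected subpaths of $V(Q)$ by Lemma~\ref{lem:connectedsupport}; if their intersection contained three consecutive vertices $x,y,z$ of $C$, picking induced subpaths of $U$ and $U'$ whose supports contain $\{x,y,z\}$ would satisfy the hypotheses of Lemma~\ref{lem:overlay}, yielding a hole inside $Z_{\{x,y,z\}}$. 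Such a hole is a petal with support $\subseteq V(Q)$, hence disjoint from $T_{petal} \subseteq T_{ext}$, contradicting the construction of $T_{petal}$. As an immediate corollary, among components with $q_1 \in \spp(U)$ at most one has $b_U := \max\{j : q_j \in \spp(U)\} \ge 3$; let $U^*$ denote this component (if it exists), set $b^* := b_{U^*}$, and define $X := \{q_j : b^* - 4 \le j \le b^*\} \cap (V(Q) \setminus \{q_1,\ldots,q_5, q_{m-4},\ldots,q_m\})$ (and $X := \emptyset$ if no such $U^*$ exists). Then $|X| \le 5$ by construction.

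To verify $X$ hits every valid $P$, suppose $P$ avoids $X$. Its endpoint in $Z_{q_1}$ sits in some component $U_1$ with $q_1 \in \spp(U_1)$; since the first entry $q_i$ of $P$ into $V(Q)$ satisfies $i \in \spp(U_1)$ and $i \ge 6$, we must have $U_1 = U^*$ and $6 \le i \le b^*$. Avoiding $X$ forces $i \le b^* - 5$, so in particular $b^* \ge 11$. Let $\hat{U} \ne U^*$ be the component containing the other endpoint of $P$, in $Z_{q_m}$; the overlap lemma gives $a_{\hat{U}} := \min\{j : q_j \in \spp(\hat{U})\} \ge b^* - 1$, hence the last vertex $q_{i_s}$ of $P$ in $V(Q)$ satisfies $i_s \ge a_{\hat{U}} \ge b^* - 1$, which combined with $i_s \notin X$ yields $i_s \ge b^* + 1$. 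Partition $\{q_6,\ldots,q_{m-5}\} \setminus X$ into $L := \{q_6,\ldots,q_{b^*-5}\}$ and $R' := \{q_{b^*+1},\ldots,q_{m-5}\}$: since $V(Q)$ is a path and $X$ separates $L$ from $R'$ in $V(Q)$, every maximal subpath of $P$ in $V(Q)$ lies entirely in $L$ or entirely in $R'$. The first such subpath contains $q_i \in L$ and the last contains $q_{i_s} \in R'$, so somewhere along $P$ there must be an off-$V(Q)$ run, inside a single component $U'$, whose entry and exit in $V(Q)$ lie in $L$ and $R'$ respectively. By connectedness of $\spp(U')$ we get $\{q_{b^*-4},\ldots,q_{b^*}\} \subseteq \spp(U')$, giving $|\spp(U') \cap \spp(U^*)| \ge 5$. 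The overlap lemma then forces $U' = U^*$, but then $\spp(U') \subseteq \{q_1,\ldots,q_{b^*}\}$ contradicts $U'$ having a support-vertex in $R'$.

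The main obstacle is the overlap lemma itself, whose proof hinges delicately on the defining property of $T_{petal}$ together with $V(Q) \cap T_{ext} = \emptyset$; once that is in hand, the remainder is a positional case analysis on how $P$ enters and exits $V(Q)$. The construction of $X$ runs in polynomial time: compute the components of $R - (T_{ext} \cup V(Q))$, their supports, identify $U^*$, and apply the formula.
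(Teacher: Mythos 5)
Your proof is correct and follows essentially the same strategy as the paper's. Your set $X$ coincides with the paper's (the paper takes $x := \max\{j : Z_{q_j}\cap V(U_1)\neq\emptyset\}$ for the component $U_1$ meeting both $Z_{q_1}\setminus\{q_1\}$ and $Z_{q_5}\setminus\{q_5\}$, and sets $X=\{q_{x-4},\ldots,q_x\}$; your $U^*$ is this same $U_1$ and your $b^*$ equals $x$). The paper's argument is organized around two claims (the ``unique starting component'' claim and a separator/reroute argument extracting disjoint $(Z_{q_{x-4}},Z_{q_x})$-paths), whereas you organize the same core idea around a single clean \emph{overlap lemma}: any two distinct components of $R-(T_{ext}\cup V(Q))$ have supports meeting in at most two vertices. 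Both arguments ultimately invoke Lemma~\ref{lem:overlay} and the ``support of every petal lies in $T_{petal}\subseteq T_{ext}$, which is disjoint from $V(Q)$'' contradiction. Your abstraction is arguably a bit tidier and the positional case analysis on where $P$ enters and leaves $V(Q)$ is more explicit. One small imprecision: you write that the petal produced by the overlap lemma has ``support $\subseteq V(Q)$''; this is not quite right since vertices of $Z_{V(Q)}$ can have $C$-neighbors just outside $Q$, so the support sits in $N_C[V(Q)]$. What you actually need (and what holds) is that this support intersects $V(Q)$ and is contained in $T_{petal}$, contradicting $V(Q)\cap T_{ext}=\emptyset$. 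This does not affect the correctness of the argument.
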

\begin{proof}
We begin with the following claim.

\begin{CLAIM}\label{claim:startingcomponent}
There is exactly one connected component of $R-(T_{ext}\cup V(Q))$ intersecting both $Z_{q_1}\setminus \{q_1\}$ and $Z_{q_5}\setminus \{q_5\}$. 
\end{CLAIM}
\begin{proofofclaim}
Since $P$ contains no vertices in $\{q_1,q_2, \ldots, q_5\}$, there is at least one component of $R-(T_{ext}\cup V(Q))$ intersecting both $Z_{q_1}\setminus \{q_1\}$ and $Z_{q_5}\setminus \{q_5\}$.
Suppose there are two such components $C_1$ and $C_2$.
For each $C_i$, we find a path $P_i$ form $Z_{q_1}\setminus \{q_1\}$ and $Z_{q_5}\setminus \{q_5\}$. Clearly $P_1$ and $P_2$ are vertex-disjoint, and there are no edges between $P_1$ and $P_2$.
As $\spp(P_1)$ and $\spp(P_2)$ share $3$ vertices $q_3, q_4, q_5$, 
by Lemma~\ref{lem:overlay}, 
$Z_{\{q_3, q_4, q_5\}}$ contains a petal. This contradicts the assumption that $T_{petal}\subseteq T_{ext}$ contains the support of every petal.
\end{proofofclaim}

Let $U_1$ be the connected component of $R-(T_{ext}\cup V(Q))$ intersecting both $Z_{q_1}\setminus \{q_1\}$ and $Z_{q_5}\setminus \{q_5\}$. 
Likewise, let $U_2$ be the unique connected component of $R-(T_{ext}\cup V(Q))$ intersecting both $Z_{q_{m-4}}\setminus \{q_{m-4}\}$ and $Z_{q_m}\setminus \{q_m\}$. 
Note that $U_1$ and $U_2$ are distinct since $P$ must intersect $V(U_1)\cap Z_{q_1}$ and $V(U_2)\cap  Z_{q_m}$.
Let $x$ be the maximum integer such that $Z_{q_x}\cap V(U_1)\neq \emptyset$ and let $X:=\{q_i:x-4\le i\le x\}$.

Now, we show that $X$ hits $P$. Suppose this is not the case. The choice of $x$ implies that $\{q_1,\ldots , q_x\}$ is a separator in $R-T_{ext}$ 
between $V(U_1)$ and $V(U_2)$. Since $P$ intersects both $V(U_1)$ and $V(U_2)$ while avoiding $\{q_1,\ldots , q_5\}\cup X$, 
it must contain a vertex of $\{q_6,\ldots , q_{x-5}\}$. (Especially, we have $x-5\ge 6$.) Let $P'_3$ be a $(\{q_6,\ldots , q_{x-5}\}, Z_{q_m})$-path which is a subpath of $P$. 
As a subpath of $P'_3$, we can choose  $(Z_{q_{x-4}},Z_{q_{x}})$-path $P_3$. 
Let $P_4$ be a path from $Z_{q_{x-4}}$ to $Z_{q_x}$ in $U_1$. 

Since no internal vertex of $P'_3$ belongs to $\{q_6,\ldots , q_{x-5}\}$, $P'_3$ and thus $P_3$ does not contain 
a vertex of $U_1$.   
Hence, $P_3$ and $P_4$ are disjoint. 
Notice that $P_3$ is contained 
in $Z_{X}\setminus X$ due to Lemma~\ref{lem:pathsupport} and the assumption $V(P)\cap X=\emptyset$. 
Therefore, 
$P_3$ is a subpath of $P$  that is contained in some component of $R-(T_{ext}\cup V(Q))$ different from $U_1$.  
Therefore,  there is no edge between $V(P_3)$ and $V(P_4)$.
As $\spp(P_3)$ and $\spp(P_4)$ share three vertices $q_{x-4}, q_{x-3}, q_{x-2}$, 
by Lemma~\ref{lem:overlay}, 
$Z_{\{q_{x-4}, q_{x-3}, q_{x-2}\}}$ contains a petal. 
However, this contradicts the assumption that $T_{petal}\subseteq T_{ext}$ contains the support of every petal. 
We conclude that $X$ hits $P$.
\end{proof}

The path meeting the conditions of Lemma~\ref{lem:tunnellemma3} can be hit by a vertex set obtained in Lemma~\ref{lem:tunnellemma2}, unless its endpoints are contained in the same connected component of $R-(T_{ext}\cup V(Q))$.
If the endpoints are contained in the same component of $R-(T_{ext}\cup V(Q))$, then clearly the endpoints can be connected via a path of $R$ traversing no vertices of $Q$. We prove that if its endpoints are contained in the same component of $R-(T_{ext}\cup V(Q))$ and the path contains a vertex of $Q$, 
then we could reroute to find another $D$-avoiding tulip with less vertices of $C$.

\begin{LEM}\label{lem:tunnellemma4}
Let $Q=q_1q_2 \cdots q_m\in \mathcal{Q}$ be a $C$-fragment of at least $36$ vertices and let $R$ be the $Q$-tunnel.
Let $H$ be a $D$-avoiding tulip in $G_{deldom}-T_{ext}$ such that 
\begin{itemize}
\item $H$ contains no vertices in $\{q_i:1\le i\le 15, m-14\le i\le m\}$,
\item $H$ contains a vertex $v$ in $V(Q)$, 
\item the endpoints of the restriction of $H$ on $R$ containing $v$ are contained in the same connected component of $R-(T_{ext}\cup V(Q))$.
\end{itemize}
Then there is a $D$-avoiding tulip $H'$ in $G_{deldom}-T_{ext}$ such that $V(C)\cap V(H')\subsetneq V(C)\cap V(H)$.
\end{LEM}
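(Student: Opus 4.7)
Proof plan: My plan is to directly reroute the part of $H$ lying inside the $Q$-tunnel, exploiting the hypothesis that $R-(T_{ext}\cup V(Q))$ contains a single component $U$ carrying both entry points of $P$ into $R$. First, I invoke Lemma~\ref{lem:tunnellemma3} to conclude that the component of the restriction of $H$ on $R$ containing $v$ is a path $P$ from some $a\in Z_{q_1}\setminus\{q_1\}$ to some $b\in Z_{q_m}\setminus\{q_m\}$, both of which lie in a single component $U$ of $R-(T_{ext}\cup V(Q))$. Let $P^*$ be a shortest $(a,b)$-path in $G[V(U)\cup\{a,b\}]$; since $V(P^*)\subseteq V(U)\subseteq V(R)\setminus V(Q)$ and $V(R)\cap V(C)=V(Q)$, we get $V(P^*)\cap V(C)=\emptyset$, and $V(P^*)$ avoids $T_{ext}$ and $D$ as well.

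The candidate hole is built from the complementary arc $P_1:=H-\mathring{a}P\mathring{b}$ and the detour $P^*$. Because $H$ is a tulip while $V(P)\subseteq V(R)\subseteq N[C]$, some $w\in V(P_1)\setminus(N[C]\cup\{a,b\})$ exists; let $w_L,w_R$ be its two $P_1$-neighbors. Setting $G^*:=G[V(P_1)\cup V(P^*)]$, the graph $G^*-w$ is still connected because $P^*$ bridges the two sides of $P_1-w$, so I take $R^*$ a shortest $(w_L,w_R)$-path in $G^*-w$; it is automatically induced. Then $H':=w_Lww_R\odot w_RR^*w_L$ is a cycle containing $w$, and it is a hole provided $w$ has no neighbor in $V(R^*)\setminus\{w_L,w_R\}$. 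Any such neighbor in $V(P_1)\setminus\{w_L,w_R\}$ would contradict inducedness of $P_1\subseteq H$, so the key remaining step is to exclude neighbors of $w$ in $V(P^*)\setminus V(P_1)\subseteq V(U)\setminus V(H)$.

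The main obstacle is this exclusion: if $x\in N(w)\cap V(U)\setminus V(H)$ existed, the edge $wx$ would not belong to $E(H)$ and so Lemma~\ref{lem:tunnellemma1} would not apply verbatim. I plan to adapt its proof. Pick a neighbor $q\in V(Q)$ of $x$, which exists since $x\in Z_{V(Q)}\setminus V(Q)$. The argument used to prove Claim~\ref{claim:notin} (applying Lemma~\ref{lem:distance2} to the paths $qxw$ and $qxwu$) still shows $w\notin N[W]$, as it relies only on $w\notin N[C]$ and the fact that $x$ has a $V(Q)$-neighbor. Now walk along $H$ from $w$ through $w_L$ (or $w_R$) until the first vertex $h_i$ with a non-branching $W$-neighbor $z'$; the sequence $q,x,w,w_L,\ldots,h_i,z'$ then constitutes either a $W$-extension (if $q\neq z'$) or an almost $W$-extension (if $q=z'$), with condition~(iv) witnessed by $w\notin N[W]$ sitting at distance $2$ from the $V(C)$-endpoint $q$. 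This contradicts either the maximality of $W$ or the fact that $T_{almost}\subseteq T_{ext}$ hits all almost $W$-extensions. Verifying inducedness of the path and that the intermediate vertices lie outside $V(W)$ proceeds by the same case analysis as in Lemma~\ref{lem:tunnellemma1}.

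Once $H'$ is confirmed to be a hole, the remaining conditions are immediate: both $V(P_1)\subseteq V(H)$ and $V(P^*)\subseteq V(U)$ avoid $D\cup T_{ext}$, so $H'$ is a $D$-avoiding hole of $G_{deldom}-T_{ext}$; and since $V(C)\cap V(P^*)=\emptyset$, we have $V(C)\cap V(H')\subseteq V(C)\cap V(P_1)=V(C)\cap V(H)\setminus(V(C)\cap V(\mathring{a}P\mathring{b}))$, which omits the vertex $v\in V(C)\cap V(\mathring{a}P\mathring{b})$, yielding strict containment $V(C)\cap V(H')\subsetneq V(C)\cap V(H)$.
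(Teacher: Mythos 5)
Your high-level plan matches the paper's: reroute $H$ across the component $U$, form a cycle from the complementary arc plus the detour, and invoke Lemma~\ref{lem:twopaths} to extract a hole $H'$ containing one fewer $C$-vertex. The difference is the choice of pivot vertex. You pivot on a vertex $w\in V(P_1)\setminus N[C]$, while the paper pivots on $p_2$, the second vertex of $P$ measured from the entrance $Z_{q_1}$ (and reroutes between $p_x\in Z_{q_{15}}$ and $p_y\in Z_{q_{m-14}}$ rather than between $a$ and $b$). That choice is not cosmetic, and it is where your argument has a genuine gap.

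The gap is in the step ``exclude neighbors of $w$ in $V(P^*)\setminus V(P_1)$.'' You propose to adapt Lemma~\ref{lem:tunnellemma1}: take $x\in N(w)\cap V(U)\setminus V(H)$, a $V(Q)$-neighbor $q$ of $x$, walk along $H$ from $w$ to the first $h_i$ adjacent to a non-branching point $z'$ of $W$, and declare $q,x,w,w_L,\dots,h_i,z'$ to be a ($W$- or almost $W$-)extension. But the inducedness requirement in condition (iii) of a $W$-extension is verified in Lemma~\ref{lem:tunnellemma1} precisely because there the internal segment $v_1v_2\cdots v_i$ is a subpath of the hole $H$, hence automatically induced. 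Here the first internal vertex $x$ lies outside $V(H)$, so you must separately rule out chords from $x$ to $w_L,\dots,h_i$. Nothing you have established forbids such a chord: $x$ is a vertex of the $Q$-tunnel, and $w_L$ or some later $h_j$ could well lie in $N(C)$ and be adjacent to $x$ (Lemma~\ref{lem:tunnellemma1} only rules out $w_L$ being in the $Q$-tunnel itself). If $xh_j\in E(G)$ for some $j$, the natural repair is to shorten the extension to $q,x,h_j,\dots,h_i,z'$, but then the vertex at distance two from $q$ becomes $h_j$, and you have no reason to believe $h_j\notin N[W]$ — minimality of $i$ only controls non-branching $W$-neighbors, and $h_j$ may be adjacent to a branching point of $W$ (members of $T_{branch}$ can be adjacent to $H$ even though $H$ avoids $T_{ext}$). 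So condition (iv) can fail, and the claimed extension need not exist.

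The paper sidesteps this entirely by choosing the pivot to be $p_2$, which sits near the entrance $Z_{q_1}$, and by rerouting only between $p_x\in Z_{q_{15}}$ and $p_y\in Z_{q_{m-14}}$. The key is then a pure distance argument (Claim~\ref{claim:nine} and Lemma~\ref{lem:generalfarnonadj}): any neighbor of $p_2$ on the detour $J$ would put $\spp(J)$ within distance $7$ of $q_1$, forcing $J$ to meet $Z_{q_9}$, which is shown to be impossible via a petal at $\{q_{13},q_{14},q_{15}\}$. This is why the hypothesis deletes $q_1,\dots,q_{15}$ (not just $q_1,\dots,q_5$): the extra slack is exactly what makes the distance argument close, and it is the ingredient your proof lacks a substitute for.
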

\begin{proof}
By Lemma~\ref{lem:tunnellemma3}, 
the connected component of the restriction of $H$ on $R$ is a path from $Z_{q_1}$ to $Z_{q_m}$.
Let $P=p_1p_2 \cdots p_n$ be the path such that $p_1\in Z_{q_1}$ and $p_n\in Z_{q_m}$.
We choose the minimum integer $x$ such that $p_x$ is contained in $Z_{q_{15}}$
and choose the maximum integer $y$ such that $p_y$ is contained in $Z_{q_{m-14}}$.
Let $U$ be the connected component of $R-(T_{ext}\cup V(Q))$ containing $p_1$ and $p_n$.
Then $p_x$ is a vertex of $U$ since otherwise $p_1Pp_x$  traverses a vertex of $Q$, 
which must be in $\{q_{16},\ldots, q_{m-15}\}$; this means that $p_1Pp_x$ contains a vertex of 
$Z_{q_{15}}$ as an internal vertex by Lemma~\ref{lem:connectedsupport}, contradicting the choice of $x$. 
Similarly, $p_y$ is in $U$. 

Let $J$ be a shortest path from $p_x$ to $p_y$ in $U$.
We want to show that $p_2$ has no neighbors in $J$.
To show this, we claim that $J$ does not intersect $Z_{q_9}$.

\begin{CLAIM}\label{claim:nine}
$J$ does not intersect $Z_{q_9}$.
\end{CLAIM}
\begin{proofofclaim}
Suppose for contradiction that $J$ contains a vertex $r\in Z_{q_9}$.
Let $J_1$ and $J_2$ be the two components of $J-r$.
Since $J$ is induced, there are no edges between $J_1$ and $J_2$.
Also, by Lemma~\ref{lem:farnonadj}, 
for each $i\in \{1, 2\}$, the endpoint of $J_i$ adjacent to $r$ 
should have a neighbor in $C$ which has distance at most $4$ from $q_9$.
This implies that $\spp(J_1)$ and $\spp(J_2)$ both contain $\{q_{13}, q_{14}, q_{15}\}$.
Then by Lemma~\ref{lem:overlay}, we can find a petal  contained in $Z_{\{q_{13}, q_{14}, q_{15}\}}$.
This contradicts that $T_{petal}\subseteq T_{ext}$ contains the support of all petals.
\end{proofofclaim}

Suppose that $p_2$ has a neighbor in $J$. Consequently, there is a $V(C)$-path from $q_1$ to a vertex of $\spp(J)$ 
traversing $p_1$ and $p_2$ whose length is $4$. By Lemma~\ref{lem:generalfarnonadj}, the endpoint of this path contained in $\spp(J)$ 
is within distance at most 7 in $C$. Thus, by Lemma~\ref{lem:connectedsupport}, $\spp(J)$ contains $q_9$ and this contradicts Claim~\ref{claim:nine}. 
Therefore, $p_2$ does not have a neighbor in $J$. 

Let $P_{rem}$ be the subpath of $H$ from $p_1$ to $p_m$ not containing $p_2$.
Observe that 
\[P_{new}=p_3Pp_x\odot p_xJp_y\odot p_yPp_m\odot p_mP_{rem}p_1\]
is a walk from $p_3$ to $p_1$ in $G_{deldom}$.
Observe that the only vertices of $P_{new}$ adjacent to  $p_2$ are $p_1$ and $p_3$.
Also, $p_1$ is not adjacent to $p_3$.
Thus by applying Lemma~\ref{lem:twopaths} to $p_1p_2p_3$ and a shortest path from $p_1$ to $p_3$ in $P_{new}$, we derive that 
$G_{deldom}[\{p_2\}\cup V(P_{new})]$ contains a hole $H'$.
Clearly, $H'$ is a $D$-avoiding hole. If it is a sunflower, then $H'$ is hit by $T_{full}$, a contradiction.
Thus, $H'$ is a $D$-avoiding tulip.
Clearly, $V(H')\cap V(C)\subsetneq V(H)\cap V(C)$ because the restriction of $H'$ on $R$ containing $p_2$ does not contain a vertex of $Q$.
\end{proof}

We are ready to construct a hitting set for all $D$-avoiding tulips. 
In the hitting set, we impose an additional condition that will be used for hitting $D$-traversing tulips.

\begin{PROP}\label{prop:Davoid}
There is a polynomial-time algorithm which finds either $k+1$ vertex-disjoint holes in $G$ or a vertex set $T_{avoid:tulip}\subseteq V(G)\setminus T_{ext}$ of at most $35(s_{k+1}+42k+26)$ vertices such that 
\begin{itemize}
\item $T_{ext}\cup T_{avoid:tulip}$ contains $N^{15}_C[T_{ext}\cap V(C)]$, and
\item $T_{ext}\cup T_{avoid:tulip}$ hits all $D$-avoiding tulips.  
\end{itemize}
\end{PROP}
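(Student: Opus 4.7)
The plan is to choose, for each $C$-fragment $Q=q_1q_2\cdots q_m\in\mathcal{Q}$, a set of at most $35$ vertices to add to $T_{avoid:tulip}$. If $\abs{V(Q)}\le 35$, I include all of $V(Q)$. Otherwise ($m\ge 36$), I include the $30$ boundary vertices $\{q_i:1\le i\le 15\}\cup\{q_i:m-14\le i\le m\}$ together with the $5$-vertex set $X_Q\subseteq V(Q)\setminus\{q_i:1\le i\le 5,\, m-4\le i\le m\}$ produced by Lemma~\ref{lem:tunnellemma2} applied to $Q$. Since there are at most $s_{k+1}+42k+26$ $C$-fragments, the resulting set $T_{avoid:tulip}$ has size at most $35(s_{k+1}+42k+26)$ and can be built in polynomial time. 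By construction, any vertex of $V(C)$ within distance $15$ in $C$ from $T_{ext}\cap V(C)$ either belongs to a short fragment (entirely in $T_{avoid:tulip}$) or is among the first or last $15$ vertices of a long fragment (added explicitly), which gives the inclusion $N^{15}_C[T_{ext}\cap V(C)]\subseteq T_{ext}\cup T_{avoid:tulip}$. If any earlier step that built $T_{petal}, T_{full}, T_{trav:sunf}, T_{branch}, T_{almost}$ already produced $k+1$ vertex-disjoint holes, the algorithm outputs them and halts; otherwise $T_{ext}$ is defined and the construction above returns $T_{avoid:tulip}$.

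To verify that $T_{ext}\cup T_{avoid:tulip}$ hits every $D$-avoiding tulip, I argue by minimum counterexample. Suppose some $D$-avoiding tulip avoids $T_{ext}\cup T_{avoid:tulip}$, and let $H$ be one minimizing $\abs{V(H)\cap V(C)}$. Since $G-V(C)$ is chordal, $V(H)\cap V(C)\neq\emptyset$; pick any $v\in V(H)\cap V(C)$. Short $C$-fragments being wholly contained in $T_{avoid:tulip}$, the vertex $v$ must lie in a long fragment $Q=q_1q_2\cdots q_m$ with $m\ge 36$; and because the first $15$ and last $15$ vertices of $Q$ also lie in $T_{avoid:tulip}$, we have $v\in\{q_i:16\le i\le m-15\}$ while $H$ avoids $\{q_i:1\le i\le 15\}\cup\{q_i:m-14\le i\le m\}$. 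The hypotheses of Lemma~\ref{lem:tunnellemma3} therefore hold, and the component of the restriction of $H$ on the $Q$-tunnel $R$ containing $v$ is a path $P$ from $Z_{q_1}$ to $Z_{q_m}$.

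I now split on the position of the endpoints of $P$ inside $R-(T_{ext}\cup V(Q))$. If they belong to distinct components, Lemma~\ref{lem:tunnellemma2} guarantees that $X_Q$ hits $P$; since $X_Q\subseteq T_{avoid:tulip}$, this contradicts the choice of $H$. Otherwise both endpoints lie in the same component, and because $H$ avoids the first and last $15$ vertices of $Q$, the hypotheses of Lemma~\ref{lem:tunnellemma4} are met and yield another $D$-avoiding tulip $H'$ in $G_{deldom}-T_{ext}$ with $V(H')\cap V(C)\subsetneq V(H)\cap V(C)$.

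The main delicate step is confirming that $H'$ also avoids $T_{avoid:tulip}$, so that minimality of $H$ is violated. Inspecting the construction in Lemma~\ref{lem:tunnellemma4}, $H'$ is assembled from subpaths of $H$ together with a detour $J$ lying in a component $U\subseteq R-(T_{ext}\cup V(Q))$, so $V(J)\cap V(Q)=\emptyset$. Since the $Q$-tunnel $R$ sits inside $G_{nbd}$ and $V(R)\cap V(C)\subseteq V(Q)$, the detour $J$ contains no vertex of $V(C)$ whatsoever, and in particular no vertex of $T_{avoid:tulip}\subseteq V(C)$. Combined with $V(H')\cap V(C)\subseteq V(H)\cap V(C)$, which already avoids $T_{ext}\cup T_{avoid:tulip}$, this shows $H'$ is a smaller counterexample, contradicting the minimality of $H$. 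Hence $T_{ext}\cup T_{avoid:tulip}$ hits every $D$-avoiding tulip, as required.
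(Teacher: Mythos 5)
Your proof is correct and follows essentially the same approach as the paper: the same construction of $T_{avoid:tulip}$ (short fragments in full, plus boundary vertices and the Lemma~\ref{lem:tunnellemma2} set for long fragments), the same size bound, and the same minimum-counterexample argument invoking Lemmas~\ref{lem:tunnellemma3}, \ref{lem:tunnellemma2}, and \ref{lem:tunnellemma4}. You are in fact slightly more careful than the paper: you explicitly verify that the replacement tulip $H'$ produced by Lemma~\ref{lem:tunnellemma4} also avoids $T_{avoid:tulip}$, using $T_{avoid:tulip}\subseteq V(C)$ and $V(J)\cap V(C)=\emptyset$, a detail the paper leaves implicit.
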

\begin{proof}
We construct a set $T_{avoid:tulip}$ as follows:
\begin{enumerate}[(1)]
\item for each component of $C-T_{ext}$ having at most $35$ vertices, we add all vertices to $T_{avoid:tulip}$,
\item for each component $Q=q_1q_2 \cdots q_m$ of $C-T_{ext}$ with $m\ge 36$, 
we add $\{q_i:1\le i\le 15, m-14\le i\le m\}$, and the set obtained in Lemma~\ref{lem:tunnellemma2} to $T_{avoid:tulip}$.
\end{enumerate}
Since $C-T_{ext}$ contains at most $s_{k+1}+42k+26$ connected components, 
we have $\abs{T_{avoid:tulip}}\le 35(s_{k+1}+42k+26)$. 
Furthermore, $T_{ext}\cup T_{avoid:tulip}$ contains the $15$-neighborhood of $T_{ext}\cap V(C)$ in $C$.
We claim that $T_{ext}\cup T_{avoid:tulip}$ hits all $D$-avoiding tulips.

Suppose for contradiction that there is a $D$-avoiding tulip $H$ in $G_{deldom}-(T_{ext}\cup T_{avoid:tulip})$.
We choose such a tulip with minimum $\abs{V(C)\cap V(H)}$.
Since $H$ is a hole, it contains a vertex in $V(C)\setminus T_{ext}$, say $v$.
Let $Q=q_1q_2 \cdots q_m$ be a connected component of $C-T_{ext}$ containing $v$.
By (1), we have $m\ge 36$.

As $\{q_i:1\le i\le 15, m-14\le i\le m\}$ was added to $T_{avoid:tulip}$, 
$v$ is a vertex in $V(Q)\setminus \{q_i:1\le i\le 15, m-14\le i\le m\}$. 
Let $R$ be the $Q$-tunnel, and let $P$ be the restriction of $H$ on the $Q$-tunnel containing $v$.

By Lemma~\ref{lem:tunnellemma3}, 
$P$ is a path from $Z_{q_1}$ to $Z_{q_m}$.
If the endpoints of $P$ are contained in the distinct components of $R-(T_{ext}\cup V(Q))$, then 
$T_{avoid:tulip}$ hits this path by Lemma~\ref{lem:tunnellemma2}, a contradiction.
Assume the endpoints of $P$ are contained in the same component of $R-(T_{ext}\cup V(Q))$.
Then by Lemma~\ref{lem:tunnellemma4}, there is a $D$-avoiding tulip $H'$ with $V(H')\cap V(C)\subsetneq V(H)\cap V(C)$, 
contradicting the minimality of $H$.

Therefore, $T_{ext}\cup T_{avoid:tulip}$ intersects all $D$-avoiding tulips.
\end{proof}

\subsection{Handling $D$-traversing tulips} \label{subsec:Dtulip}

By Lemmas~\ref{lem:dominating} and~\ref{lem:neighborofdominating}, 
every $D$-traversing tulip $H$ contains precisely one $C$-dominating vertex and it contains one or two vertices of $C$. 
Also, the vertices in $V(H)\cap V(C)$ are adjacent to the unique $C$-dominating vertex in $H$.

Here, we use a technique similar to the one in Theorem~\ref{prop:skew}. That is,  we construct auxiliary bipartite graphs, and we will find either $k+1$ vertex-disjoint holes, or a set covering all such tulips.

\begin{LEM}\label{lem:dominatingtulip2}
There is a polynomial-time algorithm which finds either $k+1$ vertex-disjoint holes in $G$ or a vertex set 
$T_{trav:tulip}\subseteq (D\cup V(C))\setminus T_{ext}\setminus T_{avoid:tulip}$ of size at most $25k+9$ such that $T_{ext}\cup T_{avoid:tulip}\cup T_{trav:tulip}$ hits every $D$-traversing tulip.  
\end{LEM}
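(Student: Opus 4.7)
The plan is to adapt the matching-based approach of Proposition~\ref{prop:skew}, which handled $D$-traversing sunflowers. By Lemmas~\ref{lem:dominating} and~\ref{lem:neighborofdominating}, any $D$-traversing tulip $H$ contains a unique vertex $d\in D$, and $V(H)\cap V(C)$ consists of either one or two vertices, each adjacent to $d$ on $H$. I split the argument according to $\abs{V(H)\cap V(C)}$.

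For the two-$C$-vertex case I will show that $T_{ext}$ alone already hits every such tulip, so no new vertices are needed. Suppose $V(H)\cap V(C)=\{v_1,v_2\}\subseteq V(C)\setminus T_{ext}$; then $P:=H-d$ is a $V(C)$-path from $v_1$ to $v_2$ in $G_{deldom}$ that contains at least one vertex outside $N[C]$, since $H$ is a tulip. Following the pattern of Lemma~\ref{lem:tunnellemma1}, I walk along $P$ from $v_1$ and analyse the first edge leading outside $N[C]$; the argument of the claim inside the proof of Lemma~\ref{lem:tunnellemma1} then shows that this escaped vertex lies outside $N[W]$ as well. Continuing along $P$ until meeting the first vertex whose neighbor is a non-branching point of $W$ --- such a vertex exists because $P$ terminates at $v_2\in V(W)\setminus T_{branch}$ --- I assemble either a $W$-extension or an almost $W$-extension, both of which contradict the maximality of $W$ or the inclusion $T_{almost}\subseteq T_{ext}$.

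For the one-$C$-vertex case I set up, for each $i\in\{0,1,2,3,4\}$, a bipartite graph $\mathcal{G}_i=(D\uplus\mathcal{A}_i,\mathcal{E}_i)$ with $\mathcal{A}_i=\{\{v_{5j+i},v_{5j+i+1},v_{5j+i+2}\}:0\le j\le\lfloor\frac{m}{5}\rfloor-1\}$ exactly as in Proposition~\ref{prop:skew}, placing an edge $(d,\{x,y,z\})$ whenever there is a one-$C$-vertex $D$-traversing tulip $H$ with $d\in V(H)$ and the unique element of $V(H)\cap V(C)$ lying in $\{x,y,z\}$; such tulips can be detected in polynomial time by a straightforward adaptation of Lemma~\ref{lem:detectinghole}. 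I apply Theorem~\ref{thm:menger} to each $\mathcal{G}_i$. In the complementary branch where each $\mathcal{G}_i$ admits a sufficiently small vertex cover $S_i$, I set $S^*_i:=(S_i\cap D)\cup\bigcup_{\{x,y,z\}\in S_i\cap\mathcal{A}_i}\{x,y,z\}$ and
\[
T_{trav:tulip}:=\left(\bigcup_{i=0}^{4} S^*_i\right)\cup\{v_{5\lfloor\frac{m}{5}\rfloor+j}:-2\le j\le 6\},
\]
which hits every remaining one-$C$-vertex $D$-traversing tulip and satisfies $\abs{T_{trav:tulip}}\le 25k+9$.

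The main obstacle is the matching-large case: unlike for sunflowers, $V(H)\setminus\{d\}$ is not contained in $Z_{\{x,y,z\}}$, so tulips matched to distinct triples and distinct $D$-vertices may still share vertices in the escape region outside $N[C]$. To handle this I combine Lemma~\ref{lem:distance2} with the maximality of $W$: the escape portion of any surviving tulip must remain outside $N[W]$ once it leaves $N[C]$, and its approach to $d$ is similarly constrained, so each such tulip can overlap with only a bounded number of the other matched tulips. A greedy or pigeonhole extraction then produces $k+1$ pairwise vertex-disjoint holes from a matching of size $\Theta(k)$, and the constant-factor loss in this extraction accounts for the jump from $15k+9$ in the sunflower bound to $25k+9$ here.
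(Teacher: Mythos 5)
Your two-case split and the treatment of the two-$C$-vertex case look reasonable (the paper handles this implicitly via Claim~\ref{claim:nopoint}, which shows any tulip surviving $T_{ext}\cup T_{avoid:tulip}$ has a single $C$-vertex; your direct $W$-extension argument is in the right spirit). The genuine gap is in the matching-large branch of the one-$C$-vertex case, and it is not a detail one can defer: your edge condition in $\mathcal{G}_i$ --- ``there is a one-$C$-vertex $D$-traversing tulip $H$ with $d\in V(H)$ and $V(H)\cap V(C)$ lying in $\{x,y,z\}$'' --- is too coarse to let you conclude anything about vertex-disjointness of the matched holes. The paper instead takes $\mathcal{A}_i$ to consist of \emph{single} vertices and requires the certifying $(x,d)$-path $P$ to satisfy that the \emph{second neighbor of $x$ in $P$ is outside $N[C]$}. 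This extra structure is precisely what drives the disjointness argument: it gives $z_i\notin N[W]$ (via Lemma~\ref{lem:distance2}), then that $P_i$ avoids points of $W$ other than $x_i$ (Claim~\ref{claim:wextension1}), and finally that two matched paths cannot intersect, since concatenating them at a shared vertex would produce a $W$-extension containing one of the $z_i\notin N[W]$ (Claim~\ref{claim:wextension2}). With your edge definition the initial segment of a matched tulip may wander arbitrarily inside $N[C]$ before escaping, and the extension you would like to build has no guaranteed distance-two vertex outside $N[W]$, so the $W$-extension contradiction is unavailable.

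Your proposed remedy --- a bounded-overlap argument via Lemma~\ref{lem:distance2} plus a pigeonhole extraction with a ``constant-factor loss'' --- is not worked out, and its diagnosis of the bound is wrong: the $25k+9$ in the paper is \emph{not} slack absorbed on the matching side (the paper extracts $k+1$ disjoint holes from a matching of size exactly $k+1$, with no loss). Rather, it comes from the vertex-cover side, where each $x\in S_i\cap\mathcal{A}_i$ is replaced by $N^2_C[x]$, a ball of $5$ vertices, because the $C$-vertex $w$ that certifies the auxiliary edge may be $w_2$ or $w_4$ while the tulip actually meets $C$ at $w_3$; this ball expansion yields $5\cdot 5k + 9 = 25k+9$. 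Incidentally, your own construction (using $3$-element triples in $S^*_i$) gives $15k+9$, so your stated bound of $25k+9$ is not consistent with your definitions. To fix the argument you would essentially need to (a) strengthen the edge condition to encode the escape-at-step-two constraint, (b) prove, for a surviving tulip, that such a certifying path exists --- which is the content of Claim~\ref{claim:earlyleft} and the subsequent construction of $H'$ in the paper, relying on Lemma~\ref{lem:smallsunflower} to confine the early portion to $Z_{\{w_2,w_3,w_4\}}\setminus Z_{\{w_1,w_5\}}$ --- and (c) add the $N^2_C$ expansion in $S^*_i$. At that point you would have reconstructed the paper's proof.
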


\begin{proof}
Let $C=v_0v_1 \cdots v_{m-1}v_0$. All additions are taken modulo $m$. 
We create an auxiliary bipartite graph $\mathcal{G}_i=(D\uplus \mathcal{A}_i, \mathcal{E}_i)$ for each $0\leq i \leq 4$, such that
\begin{itemize}
\item $\mathcal{A}_i=\{v_{5j+i}: j=0, \ldots ,\lfloor \frac{m}{5} \rfloor -1\}$,
\item there is an edge between $d\in D$ and $x\in \mathcal{A}_i$ if and only if  
there is an  $(x,d)$-path $P$ in $G- ((D\cup V(C))\setminus \{d,x\})-T_{ext}-T_{avoid:tulip}$ such that 
\begin{itemize}
\item $G[V(P)]$ is a hole, 
\item the second neighbor of $x$ in $P$ is not in $N[C]$.
\end{itemize}
\end{itemize}

It is not difficult to see that each auxiliary graph $\mathcal{G}_i$ can be constructed in polynomial time. 
For a pair of $d\in D$ and $x\in V(C)$, 
we first ensure that $dx\in E(G)$ and $\{d,x\}\cap T_{ext}=\emptyset$. 
By guessing the vertices  $y,z$ such that $z\notin N[C]$ and $dxyz$ forms an induced path,  and then 
computing a shortest $(z,d)$-path, 
we can find an $(x,d)$-path $P$ satisfying the two conditions above. 
On the other hand, if there is an $(x,d)$-path meeting the above conditions, then we can find such a path $P$ with the corresponding choice of $y$ and $z$.

Suppose there exists $i\in \{0,1,\ldots, 4\}$ such that $\mathcal{G}_i$ contains a matching $M$ of size at least $k+1$. 
We argue that there are $k+1$ holes in this case. 

Let $e_1=(d_1,x_1)$ and $e_2=(d_2,x_2)$ be two distinct edges of $M$. 
By construction, we have $\dist_C(x_1,x_2)\ge 5$, and 
for each $i\in \{1,2\}$,  
there is an $(x_i,d_i)$-path $P_i$ in  $G-((D\cup V(C))\setminus \{d_i,x_i\})-T_{ext}-T_{avoid:tulip}$ such that 
$G[V(P_i)]$ is a hole and
the second neighbor of $x_i$ in $P_i$ is not in $N[C]$.
Let $y_i$ and $z_i$ be the first and second neighbors of $x_i$ in $P_i$ respectively, for each $i$.

First, we show that $z_i\notin N[W]$. Notice that $x_i\notin T_{ext}$ and $x_i$ is a vertex of a $C$-fragment.  
If $y_i\in V(W)$ or $z_i\in V(W)$, then Lemma~\ref{lem:distance2} implies that $y_i\in V(C)$ or $z_i\in V(C)$, 
which is not possible since $P_i$ is a path with $V(P_i)\cap V(C)=\{x_i\}$.
Hence, we have $y_i,z_i\notin V(W)$. Suppose that $z_i$ has a neighbor $z_i'\in V(W)$.
Since $x_iy_iz_iz_i'$ is a $V(W)$-path of length $3$, 
$z_i$ has a neighbor in $C$ by Lemma~\ref{lem:distance2}. 
This contradicts the assumption that $z_i\notin N[C]$.
We conclude that $z_i\notin N[W]$.

Next, we show that if $P_i$ contains a non-branching point of $W$ other than $x_i$, 
then there is a $W$-extension.
\begin{CLAIM}\label{claim:wextension1}
$P_i$ contains no point of $W$ other than $x_i$.
\end{CLAIM}
\begin{proofofclaim}
We prove for $P_1$; the same proof holds for $P_2$.
Suppose the claim does not hold. Recall that $V(P_1)\cap T_{branch}\subseteq V(P_1)\cap T_{ext}=\emptyset$.  
Hence, we may assume that $P_1$ contains a non-branching point of $W$.
Let $P_1:=p_1p_2 \cdots p_m$ where $p_1=x_1$ and $p_m=d_1$, 
and let $j\in \{1,2, \ldots, m\}$ be the minimum integer such that $p_j$ is a non-branching point of $W$ other than $x_1=p_1$.
Clearly, $p_j\notin V(C)$, as $p_1$ is the unique vertex of $P_i$ contained in $C$.

If $j\le 4$, 
then $p_1P_1p_j$ is a path of length at most $3$, and 
since $p_1\notin T_{ext}$, by Lemma~\ref{lem:distance2},
$j=4$ and $p_{j-1}$ has a neighbor in $C$. 
However, it contradicts the choice of $P_1$ that the second neighbor $y_1=p_3$ of $x_1$ is not in $N[W]$.
Therefore, $j\ge 5$.
It means that $p_1P_1p_5$ is a $W$-extension containing $p_3\notin N[W]$.
This contradicts the maximality of $W$.
\end{proofofclaim}

We further claim that if $P_1$ and $P_2$ intersect, then there is a $W$-extension.
\begin{CLAIM}\label{claim:wextension2}
$P_1$ and $P_2$ do not share a vertex.
\end{CLAIM}
\begin{proofofclaim}
Suppose that $P_1$ and $P_2$ intersect. 
We choose $f_1\in V(P_1)$ having a neighbor in $P_2$  so that $\dist_{P_1}(f_1,x_1)$ is minimized.
Among neighbors of $f_1$ in $P_2$, we choose $f_2$ that is closest to $x_2$ in $P_2$.
By the choice of $f_1$ and $f_2$, $x_1P_1f_1\odot f_1f_2\odot f_2P_2x_{2}$ is an induced path. 
Note that there are no edges between $Z_{x_1}$ and $Z_{x_2}$ because $\dist_C(x_1, x_2)\ge 5$.
Therefore, $x_1P_1f_1\odot f_1f_2\odot f_2P_2x_{2}$ contains at least one of $z_1$ and $z_2$, which are not in $N[W]$.
By Claim~\ref{claim:wextension1}, $x_1P_1f_1\odot f_1f_2\odot f_2P_2x_{2}$ is a $V(W)$-path.
It implies that $x_1P_1f_1\odot f_1f_2\odot f_2P_2x_{2}$ is a $W$-extension, contradicting the maximality of $W$.
Therefore, $P_1$ and $P_2$ do not share a vertex.
\end{proofofclaim}

Claim~\ref{claim:wextension2} implies that if a bipartite graph $\mathcal{G}_i$ contains a matching of size $k+1$, then 
we can construct $k+1$ vertex-disjoint holes in polynomial time. 
Consider the case when for every $0\le i\leq 4$, $\mathcal{G}_i$ admits a vertex cover $S_i$ of size at most $k$. For $S_i$, let $S^*_i$ be the vertex set 
\[ (S_i\cap D) \cup \bigcup_{x\in S_i\cap \mathcal{A}_i} N^2_C[x] \]
and let $T_{trav:tulip}:=\left( \bigcup_{i=0}^4 S^*_i \right) \cup \{v_{5\lfloor \frac{m}{5} \rfloor+i}:-2\le i\le 6\}$. Notice that $\abs{T_{trav:tulip}}\leq 25k+9$.

In what follows, we will prove that 
the vertex set $T_{ext}\cup T_{avoid:tulip}\cup T_{trav:tulip}$ indeed hits all $D$-traversing tulips. 
Suppose that there is a $D$-traversing tulip $H$ containing $d\in D$ and $x\in V(C)$ in $G-(T_{ext}\cup T_{avoid:tulip}\cup T_{trav:tulip})$.
Clearly $P:=H-dx$ is an $(x,d)$-path such that $G[V(P)]$ is a hole, but it may not certify the existence of the edge $dx$ in the auxiliary bipartite graph, 
because the second neighbor of $x$ in $P$ can be in $N[C]$.
Let $P=p_1p_2 \cdots p_m$ with $p_1=x$ and $p_m=d$.
Let $w_1, w_2, \ldots, w_5$ be the consecutive vertices on $C$ such that $w_3=x$.
Let $i$ be the minimum integer such that $p_{i+1}\notin N[C]$. Such $p_{i+1}$ exists because $H$ is a tulip.

\begin{CLAIM}\label{claim:earlyleft}
We have $\{p_1, \ldots, p_i\}\subseteq Z_{\{w_2, w_3, w_4\}}\setminus Z_{\{w_1, w_5\}}$. 
\end{CLAIM}
\begin{proofofclaim}
Suppose not. By Lemma~\ref{lem:connectedsupport}, $p_1Pp_i$ contains an $(x,Z_{\{w_1, w_5\}})$-subpath $Q$.
Then it is easy to see that $dx\circ Q$
meets the preconditions of Lemma~\ref{lem:smallsunflower}; especially every internal vertex of $Q$ 
is in $Z_{\{w_2, w_3, w_4\}}\setminus Z_{\{w_1, w_5\}}$ by Lemma~\ref{lem:pathsupport}.
Therefore, Lemma~\ref{lem:smallsunflower} implies that 
there exists a $D$-traversing sunflower $H'$ containing $v$ and $d$ such that 
$V(H')\setminus \{d\}$ is contained in either $Z_{\{w_1, w_2, w_3\}}\cap (V(Q)\cup \{w_1\})$ or $Z_{\{w_3, w_4, w_5\}}\cap (V(Q)\cup \{w_5\})$. 
Since $T_{petal}\cup T_{trav:sunf}$ hits $H'$  by Proposition~\ref{prop:skew} while $(T_{petal}\cup T_{trav:sunf})\cap  (V(Q)\cup \{d\})
\subseteq T_{ext}\cap V(H)=\emptyset$, 
either $w_1$ or $w_5$ must be contained in $T_{petal}\cup T_{trav:sunf}$. However, by the construction of $T_{ext}$, 
this implies $x=w_3\in T_{ext}$, a contradiction.
This proves the claim.
\end{proofofclaim}

By Claim~\ref{claim:earlyleft}, $p_i$ has a neighbor in $\{w_2, w_3, w_4\}$. 
Let $w$ be a neighbor of $p_i$ in $\{w_2, w_3, w_4\}$. 
Note that $\{w_2,w_3,w_4\}\cap T_{ext}=\emptyset$ since otherwise, we have $x=w_3\in T_{ext}\cup T_{avoid:tulip}$ by the construction in Proposition~\ref{prop:Davoid}, 
contradicting the assumption that $V(H)\cap (T_{ext}\cup T_{avoid:tulip})=\emptyset$.

\begin{CLAIM}\label{claim:secondvertex}
We have $p_{i+1}\notin N[W]$. 
\end{CLAIM}
\begin{proofofclaim}
By Lemma~\ref{lem:distance2}, $p_{i+1}$ cannot be in $W$. 
We show that $p_{i+1}\notin N(W)$.
Suppose that 
$p_{i+1}$ has a neighbor $p_{i+1}'$ in $W$.
Then $wp_ip_{i+1}p'_{i+1}$ is a $V(W)$-path and $p'_{i+1}\notin V(C)$ because of $p_{i+1}\notin N[C]$.
Recall that $w\notin T_{ext}$.
Now, Lemma~\ref{lem:distance2} applies and we have $p_{i+1}\in N(C)$. This contradicts the choice of $i$ and $p_{i+1}\notin N[C]$.
It follows that $p_{i+1}\notin N(W)$.
\end{proofofclaim}

\begin{CLAIM}\label{claim:nopoint}
$P$ contains no point of $W$ other than $p_1$. 
\end{CLAIM}
\begin{proofofclaim}
Suppose the contrary. Clearly, $P$ does not contain a branching-point of $W$ 
because $V(P)\cap T_{branch}\subseteq V(H)\cap T_{ext}=\emptyset$. 
Therefore, we may assume that $P$ contains a non-branching point of $W$ other than $p_1$. 
Let $j\in \{2, \ldots, m\}$ be the minimum integer such that $p_j$ is a non-branching point of $W$. 

Suppose $j\leq i$. By Claim~\ref{claim:earlyleft}, we have $p_j\in Z_{\{w_2, w_3, w_4\}}\setminus Z_{\{w_1, w_5\}}$. 
Let $p'_j\in \{w_2,w_3,w_4\}$ be a neighbor of $p_j$. Then $p'_jp_j$ is a $V(W)$-path and $p'_j\notin T_{ext}$. 
Therefore, Lemma~\ref{lem:distance2} applies to $p'_jp_j$ and we have $p_j\in V(C)$, thus $p_j\in \{w_2,w_3,w_4\}$. 
Note that $P$ contains no vertices in $\{w_2, w_4\}$ as $H$ contains no triangles. Hence, it follows $p_j=w_3=p_1$. 
This contradicts the assumption that $p_j\neq p_1$. 

Therefore, we  assume $j\ge i+1$. 
Notice that $wp_i\odot p_iPp_j$ is a $V(W)$-path. We want to argue that this is a $W$-extension, deriving a contradiction.
If $j\in \{i+1, i+2\}$, then from $w\notin T_{ext}$ and by Lemma~\ref{lem:distance2} we know that 
$j=i+2$ and $p_{i+1}$ has a neighbor in $C$. 
However, this again contradicts $p_{i+1}\notin N[C]$.
Therefore, we have $j\ge i+3$.
It means that $wp_i\odot p_iPp_j$ has length at least $4$, and thus $p_{i+1}\notin N[W]$, which makes $wp_i\odot p_iPp_j$ 
qualify as a $W$-extension.
This contradicts the maximality of $W$. We conclude that $H$ contains no non-branching point of $W$ other than $p_1$.
\end{proofofclaim}

Let $\ell\geq i+1$ be the minimum integer such that $p_{\ell}$ is a neighbor of $w$. Since $p_m=d$ is a neighbor of $w$, 
such $\ell$ exists. Furthermore, $\ell> i+1$ because we have $p_{i+1}\notin N[C]$ due to  the choice of $i$. 
Observe that $p_{\ell}wp_i$ is an induced path with $w$ as an internal vertex and $w$ is not adjacent to any internal vertex 
of $p_iPp_{\ell}$. Now Lemma~\ref{lem:twopaths} applies, implying that $G[V(p_iPp_{\ell})\cup \{w\}]$ has a hole $H'$ containing $w$. 
By Claim~\ref{claim:nopoint}, $H'$ contains no point of $W$ other than $w$. 

Observe that $H'$ qualifies as an almost $W$-extension if $p_{\ell}\neq d$; especially we have $p_{i+1}\notin N[W]$ by Claim~\ref{claim:secondvertex}.
Therefore $T_{almost}$ hits $H'$. On the other hand, $T_{almost}\cap (V(H')\setminus \{w\})\subseteq T_{ext}\cap  V(H)=\emptyset$, which 
implies $w\in T_{almost}$. Then by the construction of $T_{ext}$, we have $x\in T_{ext}$, a contradiction. 
If $p_{\ell}=d$, then $H'-dw$ is a path certifying an edge in an auxiliary bipartite graph. Therefore either one of $\{d,w\}$ is contained in the vertex cover 
or $w=v_{5\lfloor{\frac{m}{5}}\rfloor+a}$ with $0\leq a \leq 4$. In both cases, $x$ is included in $T_{trav:tulip}$, a contradiction.
This completes the proof. 
\end{proof}

\subsection{Proof of our main result}\label{subsec:final}

We prove Theorem~\ref{thm:core}. 

We apply Lemma~\ref{lem:petalcover}, Proposition~\ref{prop:hitsunflower}, and Proposition~\ref{prop:skew}.
Over all, we can in polynomial time either output $k+1$ vertex-disjoint holes or vertex sets $T_{petal}, T_{full}, T_{trav:sunf}$ hitting petals, full sunflowers, and $D$-avoiding sunflowers, respectively.

We construct $W$ with the set $T_{branch}$ of branching points as described in Subsection~\ref{subsec:tuliphive}.
By Lemma~\ref{lem:manybranching}, 
if $W$ has at least $s_{k+1}$ branching points, then there are $k+1$ vertex-disjoint holes and they can be detected in polynomial time. 
We apply Proposition~\ref{prop:almostpacking}.
If it outputs 
$k+1$ vertex-disjoint holes in $G$, then we are done.
We may assume it outputs
a vertex set $T_{almost}$ of at most $5k+4$ vertices where $T_{almost}$ hits all almost $W$-extensions.

Let $T_{ext}$ be the union of $T_{petal}\cup T_{full}\cup T_{trav:sunf}\cup T_{branch}\cup T_{almost}$ and the $20$-neighborhood of $V(C)\cap (T_{petal}\cup T_{full} \cup T_{trav:sunf} \cup T_{branch}\cup T_{almost})$.

By Proposition~\ref{prop:Davoid}, we can in polynomial time either find $k+1$ vertex-disjoint holes or find a set 
$T_{avoid:tulip}\subseteq V(G)\setminus T_{ext}$ of at most $35(s_{k+1}+42k+26)$ vertices such that $T_{ext}\cup T_{avoid:tulip}$ hits all $D$-avoiding tulips.
By Lemma~\ref{lem:dominatingtulip2}, we can either find $k+1$ holes 
or find a set $T_{trav:tulip}\subseteq V(G)\setminus (T_{ext}\cup T_{avoid:tulip})$ of size $25k+9$ such that 
$T_{ext}\cup T_{avoid:tulip}\cup T_{trav:tulip}$ hits all $D$-traversing tulips. 
Therefore, we can either find $k+1$ vertex-disjoint holes, or output a vertex set with at most 
\begin{align*}
&\abs{T_{ext}\cup T_{avoid:tulip}\cup T_{trav:tulip}} \\
&\le 41(s_{k+1}+42k+26)+ 35(s_{k+1}+42k+26)+25k+9 \\
										&\le 76s_{k+1}+3217k+1985
										\end{align*}
						vertices hitting all holes.
This completes the proof of Theorem~\ref{thm:core}.

\section{Cycles of length at least $5$ do not have the Erd\H{o}s-P\'osa property under the induced subgraph relation}\label{sec:lowerbound}

In this section, we show that the class of cycles of length at least $\ell$ for every fixed $\ell\ge 5$ 
has no Erd\H{o}s-P\'osa property under the induced subgraph relation.

A \emph{hypergraph} is a pair $(X,\mathcal{E})$ such that $X$ is a set and $\mathcal{E}$ is a family of non-empty subsets of $X$, called \emph{hyperedges}.
A subset $Y$ of $X$ is called a \emph{hitting set} if for every $F\in \mathcal{E}$, $Y\cap F\neq \emptyset$.
For positive integers $a,b$ with $a\geq b$, 
let an \emph{$(a,b)$-uniform hypergraph}, denote it by $U_{a,b}$, be the hypergraph $(X, \mathcal{E})$ such that 
$\abs{X}=a$ and $\mathcal{E}$ is the set of all subsets of $X$ of size $b$.
It is not hard to observe that in $U_{2k-1, k}$, every two hyperedges intersect and
the minimum size of a hitting set of $U_{2k-1, k}$ is precisely $k$.

\begin{THMMAIN2}
Let $\ell\ge 5$ be a positive integer. 
Then the class of cycles of length at least $\ell$ has no Erd\H{o}s-P\'osa property under the induced subgraph relation.
\end{THMMAIN2}

\begin{proof}
Suppose for contradiction that there is a function $f_{\ell}:\mathbb{N}\rightarrow \mathbb{N}$ such that
for every graph $G$ and a positive integer $k$, either
\begin{itemize}
\item $G$ contains $k+1$ pairwise vertex-disjoint holes of length at least $\ell$ or
\item there exists $T\subseteq V(G)$ with $\abs{T}\le f_{\ell}(k)$ such that $G- T$ contains no holes of length at least $\ell$.
\end{itemize} 
Let $x=\max \{f_{\ell}(1)+1, \ell\}$. From the hypergraph $U_{2x-1, x}=(X,\mathcal{E})$,  
we construct a graph $G$ on the vertex set $S\uplus \bigcup_{F\in \mathcal{E}}Y_F$, where
\begin{itemize}
\item $S=\{s_v:v\in X\}$ is an independent set of size $\abs{X}$,
\item $Y_F=\{y_v:v\in F\}$ is an independent set of size $x$ for each $F\in \mathcal{E}$.
\end{itemize}
The edge set of $G$ is created as follows.
\begin{itemize}
\item For each hyperedge $F\in \mathcal{E}$ with $F=\{v_i:1\le i\le x\}$,  we add the edge set
\[\{y_{v_1}s_{v_1},s_{v_1}y_{v_2},\ldots , y_{v_x}s_{v_x},s_{v_x}y_{v_1}\}.\]
\item For each pair of two distinct hyperedges $F_1, F_2\in  \mathcal{E}$, we add all possible edges between $Y_{F_1}$ and $Y_{F_2}$.
\end{itemize}
Note that for each $F\in \mathcal{E}$, $G[Y_F\cup S]$ contains precisely one hole, which has length $2x(\ge \ell)$. We denote this hole as $C_F$.
Figure~\ref{fig:construction} depicts the construction.

We verify that every hole of length at least $\ell$ is one of the holes in $\{C_F:F\in \mathcal{E}\}$.

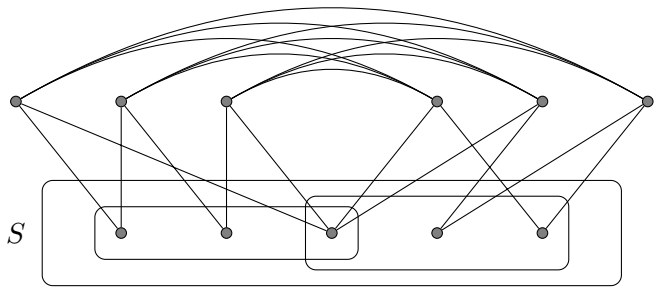
\begin{figure}
  \centering
  \begin{tikzpicture}[scale=0.7]
  \tikzstyle{w}=[circle,draw,fill=black!50,inner sep=0pt,minimum width=4pt]

 \foreach \y in {0, 2, 4, 6, 8}{
      \draw (\y,0) node [w] (a\y) {};
     }

 \foreach \y in {0, 2, 4}{
      \draw (\y-2,2.5) node [w] (b\y) {};
     }

 \foreach \y in {4, 6, 8}{
      \draw (\y+2,2.5) node [w] (c\y) {};
     }
     
     \draw (b0)--(a0)--(b2)--(a2)--(b4)--(a4)--(b0);
     \draw (c4)--(a4)--(c6)--(a6)--(c8)--(a8)--(c4);

	     \draw(b0) [in=150,out=30] to (c4); 
	     \draw(b0) [in=150,out=30] to (c6); 
	     \draw(b0) [in=150,out=30] to (c8); 
	     \draw(b2) [in=150,out=30] to (c4); 
	     \draw(b2) [in=150,out=30] to (c6); 
	     \draw(b2) [in=150,out=30] to (c8); 
	     \draw(b4) [in=150,out=30] to (c4); 
	     \draw(b4) [in=150,out=30] to (c6); 
	     \draw(b4) [in=150,out=30] to (c8); 
 
\draw[rounded corners] (0,1)--(9.5,1)--(9.5,-1)--(-1.5,-1)--(-1.5,1)--(0,1);

\draw[rounded corners] (0,.5)--(4.5,.5)--(4.5,-.5)--(-.5,-.5)--(-.5,.5)--(0,.5);
\draw[rounded corners] (4,.7)--(8.5,.7)--(8.5,-.7)--(4-.5,-.7)--(4-.5,.7)--(4,.7);

     \node at (-2, 0) {$S$};
 
   \end{tikzpicture}     \caption{An illustration of two holes constructed from two hyperedges.}\label{fig:construction}
\end{figure}

\begin{CLAIM}\label{claim:chordlesscycle}
Every hole of length at least $\ell$ is exactly one of the holes in $\{C_F:F\in \mathcal{E}\}$.
\end{CLAIM}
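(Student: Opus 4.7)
The plan is to take an arbitrary hole $C$ of length at least $\ell$ in $G$ and show that $V(C)$ must lie entirely inside $S\cup Y_F$ for a single hyperedge $F$, at which point matching it to $C_F$ is immediate. The starting structural observations are that $S$ is independent, each $Y_F$ is independent, and for any two distinct hyperedges $F_1,F_2$ the bipartite graph between $Y_{F_1}$ and $Y_{F_2}$ is complete. These three facts drive the whole argument.

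First I would show that $V(C)$ meets at most two of the sets $\{Y_F\}_{F\in\mathcal{E}}$. If it met three sets $Y_{F_1},Y_{F_2},Y_{F_3}$, picking one vertex from each would produce a triangle in $G$ (using complete bipartiteness between pairs), and a triangle cannot sit inside an induced cycle of length $\geq 4$.

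Next I would rule out the case that $V(C)$ meets exactly two such sets, say $Y_{F_1}$ and $Y_{F_2}$. Set $A:=V(C)\cap Y_{F_1}$ and $B:=V(C)\cap Y_{F_2}$; every vertex of $A$ is $G$-adjacent to every vertex of $B$, so since $C$ is induced each $a\in A$ can have at most two neighbours in $V(C)$, forcing $|A|,|B|\leq 2$. The remaining vertices of $C$ lie in $S$, which is independent, so no two consecutive cycle-vertices can both lie in $S$. A short case analysis on $(|A|,|B|)\in\{(1,1),(1,2),(2,1),(2,2)\}$, using the fact that any $a\in A$ must have \emph{all} of $B$ as cycle-neighbours (otherwise a chord appears), shows $|V(C)|\leq 4$ in every sub-case; this contradicts $|V(C)|\geq\ell\geq 5$.

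This leaves the single-$Y_F$ case, where $V(C)\subseteq S\cup Y_F$. Here I would read off directly from the construction that the only cycle of $G[S\cup Y_F]$ is $C_F$: the vertices $s_v$ with $v\notin F$ are isolated in $G[S\cup Y_F]$, while each $s_v$ with $v\in F$ has exactly two neighbours in $Y_F$, namely the two vertices prescribed by the cyclic ordering of $F$, and those are precisely the edges making up $C_F$. The delicate point I expect is the Case-B bookkeeping, specifically separating ``$G$-neighbour inside $V(C)$'' from ``cycle-neighbour on $C$'' and exploiting the complete bipartite edges to force chords as soon as $|A|+|B|>3$; everything else is routine.
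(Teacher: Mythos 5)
Your proof is correct and follows essentially the same approach as the paper: both use the triangle argument to limit $C$ to at most two of the sets $Y_F$, then use the complete-bipartite edges between two $Y$-sets together with the independence of $S$ to force $\abs{V(C)}\le 4$ when two sets are involved. Your reorganization into the explicit case analysis on $(\abs{A},\abs{B})$ after bounding each by $2$, and your spelled-out treatment of the single-$Y_F$ endgame, are cosmetic refinements of the same argument.
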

\begin{proofofclaim}
Suppose $C$ is a hole of length at least $\ell\ge 5$.
We show that $V(C)\subseteq V(C_F)$ for some $F\in \mathcal{E}$. Clearly, it implies the claim as each $C_F$ is a hole.

Suppose for contradiction that $C$ is not contained in one of $\{C_F:F\in \mathcal{E}\}$. Then there are two distinct 
hyperedges $F, F'\in \mathcal{E}$ such that $V(C)\cap Y_F\neq \emptyset$ and $V(C)\cap Y_{F'}\neq \emptyset$. 
Let $v\in V(C)\cap Y_F$ and $v' \in V(C)\cap Y_{F'}$. Due to the construction of $G$, we have $vv'\in E(G)$. Furthermore, 
this also implies that for every $F''\in \mathcal{E}\setminus \{F,F'\}$, we have $V(C)\cap Y_{F''}=\emptyset$. 

Since $S$ is independent, among the vertices of $V(C)\setminus \{v,v'\}$ there are at least $\lfloor (\abs{V(C)}-2)/2 \rfloor$ vertices 
of $Y_{F}\cup Y_{F'}$. Suppose $V(C)\setminus \{v,v'\} \setminus S$ has two vertices $w$ and $w'$. 
If both of $w$ and $w'$ are in $Y_F$, then $v'$ is adjacent to at least three vertices of $C$, a contradiction. 
Therefore, we may assume that $w\in Y_F$ and $w'\in Y_{F'}$. Then $G[\{v,v',w,w'\}]$ is a cycle of length $4$, 
contradicting the assumption that $C$ is a hole of length at least $\ell(\ge 5)$. If $V(C)\setminus \{v,v'\} \setminus S$ 
contains a unique vertex, say $w\in Y_F$, observe that $\abs{V(C)}=5$ and $wv'$ is a chord of $C$, a contradiction.
\end{proofofclaim}

By Claim~\ref{claim:chordlesscycle}, $\{C_F:F\in \mathcal{E}\}$ is precisely the set of all holes of length at least $\ell$ in $G$.
One can observe that two holes in $\{C_F:F\in \mathcal{E}\}$ intersect because $(X,\mathcal{E})$ is the hypergraph $U_{2x-1, x}$, 
in which every two hyperedges intersect.
Therefore, by the property of the function $f_{\ell}$, 
there exists a vertex subset $T\subseteq V(G)$ with $\abs{T}\le f_{\ell}(1)<x$ 
such that $G- T$ contains no holes of length at least $\ell$.
We may assume that $T$ is a subset of $S$; a vertex of $Y_F$ 
hits no hole of $G$ other than  the hole $C_F$, which can be hit by choosing 
the corresponding vertex of $S$ instead.
However, there is always a hyperedge avoiding a set of $x-1$ elements in the hypergraph $U_{2x-1,x}$, 
and it implies that $G-T$ contains a hole in $\{C_F:F\in \mathcal{E}\}$.
This is a contradiction.
We conclude that such a function $f_{\ell}$ does not exist.
\end{proof}

For an integer $\alpha\ge 2$, a graph class $\mathcal{C}$ has the \emph{$1/ \alpha$-integral Erd\H{o}s-P\'osa property} under a graph containment relation $\le_{\star}$ 
if there exists a function $f: \mathbb{N} \rightarrow \mathbb{N}$ such that
for every graph $G$ and a positive integer $k$, $G$ contains either
\begin{itemize}
\item $k+1$ pairwise distinct subsets $Z_1,\ldots , Z_k$ such that each subgraph of $G$ induced by $Z_i$ contains a member of $\mathcal{C}$ under $\le_{\star}$ and each vertex of $G$ is contained in at most $\alpha$ sets of $Z_1, \ldots, Z_k$, or
\item a vertex set $T$ of $G$ such that $\abs{T}\le f(k)$  and $G- T$ contains no member of $\mathcal{C}$ under $\le_{\star}$.
\end{itemize} 
Sometimes, a class of graphs that does not have the Erd\H{o}s-P\'osa property 
has the $1/2$-integral Erd\H{o}s-P\'osa property.
For example, the class of odd cycles has the $1/2$-integral Erd\H{o}s-P\'osa property (under the subgraph relation), while it has no Erd\H{o}s-P\'osa property~\cite{Reed1999}.

Simply modifying the proof of Theorem~\ref{thm:main2}, we can show that for any fixed integers $\alpha\ge 2$ and $\ell \ge 5$, the class of cycles of length at least $\ell$ does not have the $1/ \alpha$-integral Erd\H{o}s-P\'osa property under the induced subgraph relation. 
The main idea we used in Theorem~\ref{thm:main2} is that the set of hyperedges in the uniform hypergraph $U_{2x-1, x}$ satisfies that two hyperedges always intersect and the size of a hitting set for $U_{2x-1, x}$ is at least $x$.

Let us consider the uniform hypergraph $U_{(\alpha+1) x-1, \alpha x}$. We claim that any tuple of $\alpha+1$ hyperedges in $U_{(\alpha+1) x-1, \alpha x}$ has a common intersection. Inductively, one can verify that for all integers $2\le t\le \alpha+1$, any tuple of $t$ hyperedges has at least $(\alpha+1-t)x +(t-1)$ common intersections. Thus, any tuple of $\alpha+1$ hyperedges has a non-empty intersection.  But the size of a hitting set for $U_{(\alpha+1) x-1, \alpha x}$ is at least $x$. Thus, by taking $x = \max\{f_{\ell}(\alpha) + 1, \ell\}$ and replacing $U_{2x-1, x}$ with $U_{(\alpha+1) x-1, \alpha x}$ in the proof of Theorem~\ref{thm:main2}, we obtain that the class of cycles of length at least $\ell$ does not have the $1/ \alpha$-integral Erd\H{o}s-P\'osa property under the induced subgraph relation.

\section{Applications of the Erd\"os-P\'osa property for holes}\label{sec:applications}

\subsection{Packing and covering weighted cycles}

We show the weighted version of Erd\"os-P\'osa property of cycles.
We recall that for a graph $G$ and a non-negative weight function $w:V(G)\rightarrow \mathbb{N}\cup \{0\}$, 
let $\pack(G, w)$ be the maximum number of cycles  (repetition is allowed)  such that each vertex $v$ is used  at most $w(v)$ times, and
let $\cover(G, w)$ be the minimum value $\sum_{v\in X} w(v)$ where $X$ hits all cycles in $G$.

\begin{COR}
For a graph $G$ and a non-negative weight function $w:V(G)\rightarrow \mathbb{N}\cup \{0\}$, 
$\cover(G,w)\le O(k^2\log k)$ where $k=\pack(G,w)$.
\end{COR}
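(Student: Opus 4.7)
The plan is to reduce the weighted statement to two applications of unweighted results via blowup constructions. First I would invoke Theorem~\ref{thm:main} on a ``clique blowup'' of $(G,w)$ to hit every hole of $G$ with a hitting set of weight $O(k^2\log k)$; the residual graph is then chordal, so its cycles are witnessed by triangles, which I hit cheaply via a maximum-packing argument on a second blowup. A subtle point driving the choice of construction is that the naive independent-set blowup of a single edge $uv$ with $w(u),w(v)\ge 2$ already contains many induced $4$-cycles $u^{a}v^{b}u^{c}v^{d}$ that are not lifts of cycles of $G$, which would inflate the unweighted hole-packing number; the clique blowup avoids this while still preserving all genuine holes.

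For the first phase, assume without loss of generality that $w(v)\ge 1$ for every $v$ (any zero-weight vertex can be placed into the feedback vertex set at no cost) and form the \emph{clique blowup} $G'$: replace each $v$ by a clique $K_v$ on $w(v)$ copies, and for each $uv\in E(G)$ add every edge between $K_u$ and $K_v$. The structural lemma at the heart of the reduction is that any hole of $G'$ uses at most one vertex from each $K_v$: a second copy of $v$ would be adjacent to the first (through the clique edge) and to both of the first copy's hole-neighbors (through complete bipartiteness), producing either a chord of the hole or collapsing it to a triangle. Consequently holes of $G'$ are in bijection with pairs (hole $H$ of $G$, choice of one copy per vertex of $H$), and pairwise vertex-disjoint holes of $G'$ correspond exactly to integer weighted hole-packings in $(G,w)$; in particular $G'$ contains at most $\pack(G,w)=k$ pairwise vertex-disjoint holes. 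Applying Theorem~\ref{thm:main} to $G'$ with parameter $k$ then yields a set $X'\subseteq V(G')$ with $|X'|\le c k^2\log k$ hitting every hole of $G'$. Setting $X:=\{v\in V(G):K_v\subseteq X'\}$ we have $\sum_{v\in X}w(v)\le|X'|$, and any hole of $G-X$ would admit a lift to a hole of $G'-X'$ via the bijection above, a contradiction; hence $G-X$ is chordal at total weighted cost $O(k^2\log k)$.

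For the chordal residue $G-X$, every cycle contains a triangle, so it suffices to hit all triangles of $G-X$ with $O(k)$ additional weight. Here I would switch to the \emph{independent-set blowup} $G''$ of $(G-X,w)$, in which each $v$ becomes an independent set of $w(v)$ copies and every edge becomes a complete bipartite graph; since copies of a single vertex are non-adjacent, every triangle of $G''$ lifts to a triangle of $G-X$, and the number of vertex-disjoint triangles of $G''$ equals the integer weighted triangle-packing number of $(G-X,w)$, which is at most $\pack(G,w)=k$. A maximum vertex-disjoint triangle packing in $G''$ therefore consists of at most $k$ triangles whose union $S$ of vertex sets satisfies $|S|\le 3k$ and, by maximality, meets every triangle of $G''$. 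Lifting $Y:=\{v\in V(G-X):\text{all copies of }v\text{ lie in }S\}$ yields $\sum_{v\in Y}w(v)\le|S|\le 3k$ and hits every triangle of $G-X$, so $G-X-Y$ is chordal and triangle-free, hence a forest. Thus $X\cup Y$ is a feedback vertex set of $G$ with weight $O(k^2\log k)+3k=O(k^2\log k)$, proving the corollary. The main obstacle I expect is the correct design of the two blowups so that the corresponding substructures are preserved while spurious copies are killed: the clique-blowup lemma above is essential because, without the intra-clique edges, the transformed graph would carry the fake $4$-holes mentioned above and violate the hole-packing bound required to invoke Theorem~\ref{thm:main}.
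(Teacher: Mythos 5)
Your argument is correct, but it takes a genuinely different route from the paper's. The paper first \emph{subdivides} every edge of $G$ (so the resulting graph $G'$ has girth at least $4$ and every cycle of $G$ --- triangles included --- becomes a \emph{hole} of $G'$), and only then takes a single clique blowup $H$. In $H$, vertex-disjoint holes correspond exactly to weighted cycle packings of $(G,w)$ and hole-hitting sets correspond exactly to weighted cycle covers, so one invocation of Theorem~\ref{thm:main} on $H$ finishes the proof in a single shot. You skip the subdivision, and pay for it with a second phase: the clique blowup of $G$ directly only captures \emph{holes} of $G$, so after deleting your lifted set $X$ the residual graph is merely chordal rather than acyclic, and you must separately destroy its triangles. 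Your remedy --- an independent-set blowup of $(G-X,w)$ plus a maximal vertex-disjoint triangle packing of size at most $k$ --- is sound, and costs only an extra $O(k)$ in weight. Your side remark is also on point: the independent-set blowup would be wrong for the hole phase (it manufactures spurious induced $C_4$'s from a single heavy edge) but is exactly right for the triangle phase (it manufactures no spurious triangles, since copies of one vertex are pairwise non-adjacent), which is why you need two different blowups whereas the paper needs one. Both routes give the $O(k^2\log k)$ bound. The paper's reduction is tidier --- subdivision is precisely what lets it treat triangles on the same footing as longer cycles and moreover yields exact packing/covering equalities between $(G,w)$ and $H$ --- but your two-phase argument is a correct and fairly transparent alternative.
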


\begin{proof}
We may assume that $w(v)$ is positive for every $v\in V(G)$. Let $k=\pack(G,w)$.
We construct a new graph $H$ from $G$ as follows:
\begin{enumerate}
\item We obtain a graph $G'$ from $G$ by subdividing each edge $uv$ into an induced path $u-e_{uv}- v$, and give the weight $w(e_{uv}):=\min \{w(u), w(v)\}$.
\item For each vertex $v$ in $G'$, let $Q_v$ be a complete graph on $w(v)$ vertices.
\item Let $H$ be the graph obtained from the vertex-disjoint union of all graphs in $\{Q_v:v\in V(G')\}$ by 
adding all edges between $Q_v$ and $Q_x$ for each edge $vx$ in $G'$.
\end{enumerate}
We will show that 
\begin{itemize}
\item $\pack (G, w)$ is the same as the number of maximum pairwise vertex-disjoint holes in $H$, and 
\item $\cover(G, w)$ is the same as the size of a minimum vertex set $S$ in $H$ such that $H-S$ has no holes.  
\end{itemize}
By Theorem~\ref{thm:main}, this implies $\cover(G,w)\le O(k^2\log k)$.

First, each hole 
$C$ of $H$ intersects a complete subgraph $Q_v$ at most once for every $v\in V(G')$; otherwise, $H$  contains a triangle.
Also, if a hole $C$ intersects a complete subgraph $Q_v$ for some vertex $v$ in $G'$, 
then $C$ traverses precisely two complete subgraphs among $\{Q_x:x\in N_{G'}(v)\}$.
This means that each hole $C$ of $H$ corresponds to a cycle of $G'$, and thus to a cycle of $G$.
It easily follows that $\pack(G, w)$ is as large  as the number of maximum pairwise vertex-disjoint holes in $H$.
Conversely, let $\mathcal{P}$ be a packing of cycles of $G$ in which every vertex $v\in V(G)$ is used at most $w(v)$ times. 
Clearly, each cycle of $G$ yields a canonical hole of $H$ due to the subdivision in the intermediate graph $G'$.
It is easy to see that one can build a packing $\mathcal{P}'$ of vertex-disjoint holes of $H$ from $\mathcal{P}$ 
such that $\abs{\mathcal{P}'}=\abs{\mathcal{P}}$. Therefore, $\pack(G,w)$ equals the number of maximum pairwise vertex-disjoint holes in $H$. 

Let $S$ be a minimum-sized vertex set in $H$ such that $H-S$ has no holes.
We observe that if $S$ contains a vertex of $Q_v$ for some $v\in V(G')$, 
then $V(Q_v)\subseteq S$ because of the minimality of $S$ and the fact that all vertices of $Q_v$ have the same neighborhood.

If $S$ contains  $Q_{e_{xy}}$ for a subdividing vertex $e_{xy}$ of $G'$ such that $w(e_{xy})=w(x)$, then
the set $(S\cup V(Q_x))\setminus V(Q_{e_{xy}})$ hits every hole of $H$.
Hence, we can assume that $S$ contains only vertices of $Q_v$ for $v\in V(G)$. 
Now, the vertex subset $S':=\{v\in V(G): V(Q_v)\subseteq S\}$ of $V(G)$ hits every cycle of $G$ and $\sum_{v\in S'}w(v)=\abs{S}$. 
This implies that $\cover(G, w)\le \abs{S}$.
Conversely, if $G$ contains a solution $S$, then we can simply take $\bigcup_{v\in S}Q_v$ to hit every hole of $H$. 
Therefore, 
$\cover(G, w)$ equals the size of a minimum vertex set $S$ of $H$ such that $H-S$ has no holes.
\end{proof}

\subsection{Approximations for \textsc{Chordal Vertex Deletion}}

Theorem~\ref{thm:main} can be converted into an approximation algorithm of factor $O({\sf opt}\log {\sf opt})$ for {\sc Chordal Vertex Deletion}, where 
${\sf opt}$ is the minimum number of vertices whose deletion makes the input graph $G$ chordal.
We may assume that the input graph $G$ is not chordal. 
The approximation algorithm works as follows. 
We first greedily construct a maximal packing of $p$ vertex-disjoint holes. For $k=p,\ldots$, we apply the algorithm of Theorem~\ref{thm:main}.
If it outputs $k+1$ holes, then we increase $k$ by one and recurse. If a hitting set $X$ of size $O(k^2\log k)$ is returned, then 
we return $X$ as an approximate solution for {\sc Chordal Vertex Deletion} and terminate the algorithm.
To see that this procedure achieves the claimed approximation factor, notice that performing the algorithm of Theorem~\ref{thm:main} for $k$ means that in the previous step (whether it was the greedy packing step or the run of  the 
algorithm of Theorem~\ref{thm:main}) found $k$ vertex-disjoint holes. Therefore, we have ${\sf opt}\geq k$. 
In particular, when a hitting set $X$ is returned, we have $\abs{X}\leq c_1k^2\log k +c_2\leq c_1 {\sf opt}^2\log {\sf opt} + c_2$. Here $c_1, c_2$ are the constants as in Theorem~\ref{thm:main}.
As we run the polynomial-time algorithm of Theorem~\ref{thm:main} at most $n$ times, clearly this approximation runs in polynomial-time.

The following statement summarizes the result.

\begin{THM}[Restatement of Theorem~\ref{thm:main3}]
There is an approximation algorithm of factor $O({\sf opt}\log {\sf opt})$ for {\sc Chordal Vertex Deletion}. 
\end{THM}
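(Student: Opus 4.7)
The plan is to derive the approximation algorithm as a direct wrapper around Theorem~\ref{thm:main}, using the packing/covering duality supplied by the gap function to obtain a certified lower bound on $\mathsf{opt}$. First I would compute a maximal greedy packing of pairwise vertex-disjoint holes, say of size $p$; this certifies $\mathsf{opt} \geq p$, because any chordal deletion set must hit every hole of the packing. Starting from $k = p$, I would then iteratively invoke the polynomial-time algorithm of Theorem~\ref{thm:main} on $(G,k)$. Each call has two possible outcomes: either we obtain $k+1$ vertex-disjoint holes, in which case we upgrade the lower bound to $\mathsf{opt} \geq k+1$, increment $k$, and loop; or we obtain a vertex set $X$ with $|X| \leq ck^2 \log k$ such that $G - X$ is chordal, which we return as the approximate solution.

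The approximation guarantee then falls out of the bookkeeping. At the moment the second outcome triggers for some value $k$, the previous step (either the initial greedy packing or the preceding application of Theorem~\ref{thm:main}) produced $k$ pairwise vertex-disjoint holes, hence $\mathsf{opt} \geq k$. Consequently $|X| \leq ck^2 \log k \leq c\, \mathsf{opt}^2 \log \mathsf{opt}$, so the approximation factor is
\[
\frac{|X|}{\mathsf{opt}} \;\leq\; c\, \mathsf{opt} \log \mathsf{opt} \;=\; O(\mathsf{opt} \log \mathsf{opt}),
\]
as required. The degenerate case $\mathsf{opt} = 0$ (i.e.\ $G$ is already chordal) is detected once and for all in polynomial time by Lemma~\ref{lem:detectinghole}, and in that case the algorithm returns the empty set.

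For the running time, observe that $k$ strictly increases at each iteration and cannot exceed $|V(G)|$, since a graph on $n$ vertices cannot contain more than $n$ vertex-disjoint holes. Thus the loop performs at most $n$ calls to the polynomial-time algorithm of Theorem~\ref{thm:main}, so the total running time remains polynomial. There is no real technical obstacle here: the heavy lifting is done by Theorem~\ref{thm:main}, and the only subtlety is to notice that the lower bound $\mathsf{opt} \geq k$ is valid \emph{at the step where the hitting set is returned}, which is why the greedy initialization and the monotone increase of $k$ are needed.
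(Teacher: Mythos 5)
Your proposal is correct and takes essentially the same approach as the paper: a greedy packing initialization, iterative invocation of the algorithm from Theorem~\ref{thm:main} with increasing $k$, and the observation that when a hitting set is returned for parameter $k$, the previous iteration certifies $\mathsf{opt}\geq k$, giving $|X|\leq ck^2\log k\leq c\,\mathsf{opt}^2\log\mathsf{opt}$. Your treatment of the degenerate chordal case and the explicit polynomial running-time bound are minor elaborations but do not change the argument.
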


\section{Concluding remarks}

We show that the class of cycles of length at least $4$ has the  Erd\H{o}s-P\'osa property under the induced subgraph relation, with a gap function $f(k)=c_1k^2\log k+c_2$ for some constants $c_1$ and $c_2$. 
A natural question is whether an improved bound can be obtained.
A lower bound $c'k\log k$ for some constant $c'$ is known for the Erd\H{o}s-P\'osa property on cycles~\cite{ErdosP1965}. 
The following reduction shows that this is also a lower bound on a gap function for holes. Given a graph $G$, let $G'$ be a graph obtained by subdividing each edge of $G$ once. 
Then the girth of $G'$ is at least $4$, and there is an obvious one-to-one correspondence between cycles of $G$ and holes of $G'$.
Since we may assume that a minimum-sized vertex set hitting every hole of $G'$ contains no subdividing vertex, the packing and covering 
numbers for cycles of $G$ equals the packing and covering numbers for holes of $G'$, respectively.

One might ask whether or not variants of cycles satisfy the Erd\H{o}s-P\'osa property under the induced subgraph relation.
We list some of open problems.
\begin{itemize}
\item It was shown in~\cite{KakimuraKM2011, PontecorviW2012} 
that any graph with a vertex set $S$ contains either $k+1$ vertex-disjoint $S$-cycles or a vertex set of size $O(k\log k)$ hitting all $S$-cycles, where $S$-cycles are cycles intersecting $S$.  As a generalization, Bruhn, Joos, and Schaudt~\cite{BruhnJS2017} showed that the class of long $S$-cycles also has the Erd\H{o}s-P\'osa property. It is easy to see that the class of $S$-cycles has the Erd\H{o}s-P\'osa property under the induced subgraph relation, because every $S$-cycle contains an induced $S$-cycle.
Determining whether or not the class of $S$-cycles of length at least $4$ has the Erd\H{o}s-P\'osa property under the induced subgraph relation is an open problem.
\item Huynh, Joos, and Wollan~\cite{HuyneJW2017} generalized the  results of~\cite{KakimuraKM2011, PontecorviW2012}  to $(S_1, S_2)$-cycles, which intersect two prescribed vertex sets $S_1$ and $S_2$.
The same question can be asked for the cycles of length at least $4$ intersecting two prescribed sets $S_1$ and $S_2$.
\item Thomassen~\cite{Thomassen1988} proved that 
the class of even cycles has the Erd\H{o}s-P\'osa property. 
One might ask whether the class of even cycles has the Erd\H{o}s-P\'osa property under the induced subgraph relation.
\end{itemize}

Our construction for no Erd\H{o}s-P\'osa property in Section~\ref{sec:lowerbound} 
creates many holes of length exactly $4$, and we are not aware of any construction which does not feature (many) holes of length $4$. 
Does the class of cycles of length at least $\ell$, for fixed $\ell\ge 6$, has the Erd\H{o}s-P\'osa property on $C_4$-free graphs under the induced subgraph relation? 
The answer is not clear to us. When $\ell=5$, the Erd\H{o}s-P\'osa property holds as an immediate consequence of our result.

Our result can be reformulated as the Erd\H{o}s-P\'osa property for the class of $C_4$-subdivisions under the induced subgraph relation. 
Investigating the Erd\H{o}s-P\'osa property  of $H$-subdivisions under the induced subgraph relation for other graphs $H$, and the computational aspect of related covering/packing problems can be a fruitful research direction. 
Recently, the second author and Raymond~\cite{KwonR18} determined, for various graphs $H$, whether the class of $H$-subdivisions has the Erd\H{o}s-P\'osa property under the induced subgraph relation or not. 

\section*{Acknowledgment}
We thank D\'aniel Marx and Eduard Eiben for helpful discussions at an early stage of this work, and also the anonymous reviewers for comments to improve the original manuscript.

\end{document}